\theoremstyle{plain}
\newtheorem{theorem}{Theorem}
\newtheorem{lemma}[theorem]{Lemma}
\newtheorem{proposition}[theorem]{Proposition}
\newtheorem{corollary}[theorem]{Corollary}
\numberwithin{theorem}{section}
\numberwithin{equation}{theorem}
\theoremstyle{definition}
\newtheorem{definition}[theorem]{Definition}
\newtheorem{example}[theorem]{Example}
\newtheorem{remark}[theorem]{Remark}
\newtheorem{question}[theorem]{Question}
\newtheorem*{question*}{Question}
\newcommand{\cwlt}{(\textup{cwlt})}
\newcommand{\INT}{\textup{int}}
\DeclareMathOperator{\rk}{rk}
\DeclareMathOperator{\End}{End}
\DeclareMathOperator{\GKdim}{GKdim}
\DeclareMathOperator{\Aut}{Aut}
\DeclareMathOperator{\gr}{gr}
\DeclareMathOperator{\tr}{tr}
\DeclareMathOperator{\ML}{ML}
\DeclareMathOperator{\LND}{LND}
\begin{document}

\title
{Zariski Cancellation Problem for Noncommutative Algebras}

\author{Jason Bell and James J. Zhang}

\address{Bell: Department of  Pure Mathematics,
University of Waterloo, Waterloo, ON N2L 3G1,
Canada}

\email{jpbell@uwaterloo.ca}

\address{Zhang: Department of Mathematics, Box 354350,
University of Washington, Seattle, Washington 98195, USA}
\email{zhang@math.washington.edu}

\begin{abstract}
A noncommutative analogue of the Zariski cancellation 
problem asks whether $A[x]\cong B[x]$ implies $A\cong B$ 
when $A$ and $B$ are noncommutative algebras.  We resolve this 
affirmatively in the case when $A$ is a noncommutative finitely 
generated domain over the complex field of Gelfand-Kirillov 
dimension two.  In addition, we resolve the Zariski cancellation 
problem for several classes of Artin-Schelter regular algebras of 
higher Gelfand-Kirillov dimension.
\end{abstract}

\subjclass[2000]{Primary 16W70, 16W25, 16S38}


\keywords{Zariski cancellation problem, noncommutative algebra,
effective and dominating discriminant, locally nilpotent derivation,
skew polynomial ring}


\maketitle

\setcounter{section}{-1}
\section{Introduction}
\label{xxsec0}

Kraft said in his 1995 survey \cite{Kr}
that ``{\it there is no doubt that  complex affine
$n$-space ${\mathbb A}^n={\mathbb A}_{\mathbb C}^n$ is one of the basic
objects in algebraic geometry. It is therefore surprising how little
is known about its geometry and its symmetries. Although there has 
been some remarkable progress in the last few years, many basic problems
remain open.}'' His remark still applies even today---20 years later.
Let us start with one of the famous questions in commutative affine 
geometry. Throughout the introduction, we let $k$ be an algebraically 
closed field of characteristic zero (except for some results mentioned 
below).

\begin{question}[{\bf Zariski Cancellation Problem}]
\label{xxque0.1}
Does an isomorphism $Y\times {\mathbb A}^{1}\cong {\mathbb A}^{n+1}$
imply that $Y$ is isomorphic to ${\mathbb A}^{n}$? Or equivalently,
does an isomorphism $B[t]\cong k[t_1,\cdots,t_{n+1}]$ of algebras 
imply that $B$ is isomorphic to $k[t_1,\cdots,t_n]$?
\end{question}

For simplicity, let {\bf ZCP} denote the Zariski Cancellation Problem.
An algebra $A$ is called {\it cancellative} if $A[t]\cong B[t]$ for 
some algebra $B$ implies that $A\cong B$. So the {\bf ZCP} asks if the 
commutative polynomial ring $k[x_1,\cdots,x_n]$ is cancellative. Recall 
that $k[x_1]$ is cancellative by a result of Abhyankar-Eakin-Heinzer 
\cite{AEH}, while $k[x_1,x_2]$ is cancellative by Fujita \cite{Fu} and 
Miyanishi-Sugie \cite{MS} in characteristic zero and by Russell 
\cite{Ru} in positive characteristic. The {\bf ZCP} was open for 
many years. In 2013, a remarkable development was made by 
Gupta \cite{Gu1,Gu2} who completely settled the {\bf ZCP} negatively 
in positive characteristic for $n\geq 3$. The {\bf ZCP} in 
characteristic zero remains open for $n\geq 3$. We give a list of 
open questions and problems that are closely related to the {\bf ZCP}.

\begin{question}
\label{xxque0.2}
For the following, let $k^\times$ be $k\setminus \{0\}$.
\begin{enumerate}
\item[]
{\rm{\bf (ChP:=Characterization Problem)}}
Find an algebro-geometric characterization of ${\mathbb A}^n$.
\item[]
{\rm{\bf (EP:=Embedding Problem)}}
Is every closed embedding ${\mathbb A}^{a}\hookrightarrow
{\mathbb A}^{a+n}$ equivalent to the standard embedding?
\item[]
{\rm{\bf (AP:=Automorphism Problem)}}
Describe the group of polynomial 
automorphisms of ${\mathbb A}^n$.
\item[]
{\rm{\bf (LP:=Linearization Problem)}}
Is every automorphism of ${\mathbb A}^n$ of finite order linearizable?
\item[]
{\rm{\bf (JC:=Jacobian Conjecture)}}
Is every polynomial morphism $\phi: {\mathbb A}^n
\to {\mathbb A}^n$
with $\det J\phi\in k^\times$ an isomorphism?
\end{enumerate}
\end{question}

There are some known relationships between these problems. For example, 
a positive solution of the {\bf LP} would imply a positive solution 
of the {\bf ZCP}. When $n\leq 2$, most of these questions (except for 
the {\bf JC}) are resolved and there is a diagram of implications
$$
{\bf EP} \Longrightarrow {\bf AP}  \Longrightarrow {\bf LP}  
\Longrightarrow {\bf ZCP}
$$
along with a  possible ``missing link" ${\bf JC} \Longrightarrow {\bf ZCP}$ 
(see \cite{vdE}). Note that the {\bf EP} (in dimension 2) was solved by 
Abyhyankar-Moh \cite{AM} and Suzuki \cite{Su}. Gupta's work 
\cite{Gu1, Gu2} would suggest a negative solution to the {\bf ZCP}, 
even in characteristic zero. If the ``missing link" could be established
and if the {\bf ZCP} had a negative solution, then the {\bf JC} could 
be settled negatively. Many authors have been working on these 
questions---see the references in \cite{Kr, Lu, vdE}.

Some naive and direct translations of these questions into the 
noncommutative setting are easily seen to have negative solutions. So it is 
important to carefully formulate noncommutative versions of these 
questions and to understand for which classes of (commutative or 
noncommutative) algebras these questions have positive or 
negative answers. Hopefully new ideas will emerge via the study of 
the noncommutative versions of these open questions. In this paper 
we mainly consider the following noncommutative formulation of the 
{\bf ZCP}.

\begin{question}
\label{xxque0.3}
Let $A$ be a noncommutative noetherian Artin-Schelter regular algebra
\cite{AS}. When is $A$ cancellative?
\end{question}

Since Artin-Schelter regular algebras are considered as a noncommutative
generalization of the commutative polynomial rings, the above question 
can be viewed as a noncommutative version of {\bf ZCP}.

In this paper we present two ideas to deal with the 
{\bf ZCP} for some families of noncommutative algebras. One
is to use the \emph{Makar-Limanov invariant} and the other is to use 
 \emph{discriminants}. 

Let us first review the Makar-Limanov invariant.
Let $A$ be an algebra and let $\LND(A)$ 
be the collection of locally nilpotent $k$-derivations of $A$. The 
{\it Makar-Limanov invariant} of $A$ is defined to be 
$$\ML(A)=\bigcap_{\delta\in \LND(A)} \ker (\delta).$$
The Makar-Limanov invariant was originally introduced by 
Makar-Limanov \cite{Ma1} and has become a very useful invariant in 
commutative algebra. We say that $A$ is {\it $\LND$-rigid} if 
$\ML(A)=A$, or equivalently if $\LND(A)=\{0\}$. One of our main results 
(see Theorem \ref{xxthm3.6} for the precise statement and proof) is the 
following, which shows that rigidity controls cancellation.

\begin{theorem}
\label{xxthm0.4} 
Let $A$ be a finitely generated domain of finite Gelfand-Kirillov 
dimension. If $A$ is $\LND$-rigid, then $A$ is cancellative. 
\end{theorem}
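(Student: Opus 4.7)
The plan is to reduce cancellativity to the identity $\ML(A[t]) = A$ as subalgebras (a ``key lemma'' using LND-rigidity and finite GK-dimension of $A$), and then to transport this canonical subalgebra across any isomorphism $\phi\colon A[t]\xrightarrow{\sim} B[s]$ to identify $A$ with $B$.

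First I would prove the \textbf{key lemma}: $\ML(A[t]) = A$. Let $\delta$ be any locally nilpotent $k$-derivation of $A[t]$. Since $t$ is central in $A[t]$, writing $\delta(a)=\sum_{i\geq 0} p_i(a)\,t^i$ for $a\in A$ yields a sequence of $k$-linear derivations $p_i\colon A\to A$. After an appropriate adjustment of generator---replacing $t$ by $t-h$ for a well-chosen $h\in A$ so that $\delta(t)\in tA[t]$---one arranges that $\delta$ preserves $(t)$ and hence descends to an LND on $A[t]/(t)\cong A$; this descended LND is exactly $p_0$, which vanishes by LND-rigidity. Hence $\delta(A)\subseteq tA[t]$. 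For the higher coefficients, finite generation together with finite GK-dimension of $A$ implies that $\delta$ shifts $t$-degree by a bounded amount, so a leading derivation $p_N\colon A\to A$ is well defined. A computation of the top-$t$-coefficient of $\delta^k(a)$ in a domain of characteristic zero then forces $p_N$ itself to be locally nilpotent, hence zero by rigidity; iterating (or arguing by contradiction on the choice of $N$) shows $\delta(A) = 0$. The reverse inclusion $\ML(A[t])\subseteq A$ is immediate from $\partial_t$.

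With the key lemma in hand, given $\phi\colon A[t]\xrightarrow{\sim} B[s]$ one has $\phi(A) = \phi(\ML(A[t])) = \ML(B[s]) \subseteq \ker(\partial_s) = B$. To upgrade this to equality I would show that $B$ is also LND-rigid: any nontrivial LND $\eta$ on $B$ extends to the LND $\eta\otimes 1$ on $B[s]\cong A[t]$, which by the key lemma kills $\phi(A)$, forcing $\phi(A)\subseteq\ker(\eta)$; but under our hypotheses a nonzero LND on a finitely generated domain of finite GK-dimension has kernel of strictly smaller GK-dimension, contradicting $\GKdim(\phi(A))=\GKdim(A)=\GKdim(B)$. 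Thus $B$ is LND-rigid, the key lemma applies again to give $\ML(B[s])=B$, and so $\phi(A)=B$, proving $A\cong B$.

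The main obstacle is the key lemma, specifically the step passing from the vanishing of $p_0$ to the vanishing of all higher $p_i$. Producing the element $h$ that makes $\delta(t)$ lie in $tA[t]$ requires solving an equation over the noncommutative ring $A$, and the subsequent top-degree argument depends on both the finite GK-dimension bound on the $t$-shift of $\delta$ and on the characteristic-zero, domain hypotheses to ensure that the integer coefficients produced by iterated application of $\delta$ never vanish unless the underlying derivation does.
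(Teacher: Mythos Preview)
Your overall architecture matches the paper exactly: first establish the key lemma $\ML(A[t])=A$ (this is the paper's Lemma~3.5), then transport it across an isomorphism $A[t]\cong B[s]$ (this is Theorem~3.3(2)). The difficulty, as you yourself flag, is entirely in the key lemma, and here your argument has a genuine gap.

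The change-of-variable step does not work. You claim one can choose $h\in A$ so that $\delta(t-h)\in (t-h)A[t-h]$, but there is no mechanism producing such an $h$: already for $\delta=\partial_t$ one has $\delta(t-h)=1$ for every $h\in A$. Of course in that example $\delta(A)=0$ and there is nothing to prove, but the point is that nothing in the hypotheses lets you kill the constant term of $\delta(t)$ by a substitution inside $A$. Without this, $\delta$ need not preserve $(t)$, the quotient map $A[t]/(t)\to A$ does not inherit a locally nilpotent derivation, and your conclusion $p_0=0$ is unjustified. Your subsequent top-degree argument for $p_N$ implicitly assumes $\deg_t\delta(t)\le N$; when $\deg_t\delta(t)$ is larger, the leading $t$-coefficient of $\delta^k(a)$ picks up contributions from $\delta(t)$ and is no longer $p_N^k(a)$.

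The paper's proof of Lemma~3.5 works at the top rather than the bottom: writing $\delta(a)=\mu(a)x^m+(\text{lower})$ with $m$ minimal, it splits into three cases according to $\deg_x\delta(x)$. If $\deg_x\delta(x)\le m$ your top-degree idea goes through and $\mu$ is locally nilpotent. If $\deg_x\delta(x)=m+1$, one needs an additional ingredient you do not have: the embedding $A[x]\hookrightarrow E[y;\delta_0]$ into an Ore extension over the division ring $E=\ker\delta\subseteq Q(A[x])$ (Lemma~3.4), followed by a degree count in $y$ that yields a contradiction. If $\deg_x\delta(x)>m+1$, a direct computation shows $\delta$ is not locally nilpotent on $x$ itself. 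The middle case is the one your outline misses entirely.

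For the final step you take a slightly different route than the paper: you argue that $B$ must itself be $\LND$-rigid, invoking the claim that a nonzero $\LND$ on a finitely generated domain of finite GK-dimension has kernel of strictly smaller GK-dimension. That claim is true but needs proof (it again rests on the Ore-extension embedding of Lemma~3.4). The paper bypasses this: once $\phi(A)\subseteq B$, one has $A\subseteq\phi^{-1}(B)\subseteq A[t]$ with $\GKdim\phi^{-1}(B)=\GKdim B=\GKdim A$, and an elementary graded-domain argument (Lemma~3.2) forces $\phi^{-1}(B)=A$ directly.
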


By the above theorem, we would like to show that various classes 
of noncommutative algebras are $\LND$-rigid. Here is one 
of the consequences [Corollary \ref{xxcor3.7}].

\begin{theorem}
\label{xxthm0.5}
Let $k$ be an algebraically closed field of characteristic zero. 
Let $A$ be a finitely generated domain of Gelfand-Kirillov dimension two.
If $A$ is not commutative, then $A$ is cancellative.
\end{theorem}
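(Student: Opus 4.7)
The plan is to reduce the theorem to the $\LND$-rigidity assertion via Theorem~\ref{xxthm0.4}: it suffices to prove that every finitely generated noncommutative domain $A$ of Gelfand-Kirillov dimension two over $\mathbb{C}$ is $\LND$-rigid, or else to verify cancellation directly for the exceptional non-rigid ones. I would first attempt the uniform argument that all such $A$ are $\LND$-rigid, and fall back to the discriminant technique advertised in the abstract for the residual cases.

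Suppose $\delta$ is a nonzero locally nilpotent derivation on $A$, and set $B=A^{\delta}$. A filtration argument on the iterated kernels $\ker(\delta^{n})$ (each successive quotient being a torsion-free $B$-module) forces $\GKdim B \le 1$; choosing $a \in A$ with $\delta^{2}(a)=0$ and $\delta(a) \ne 0$ produces a transcendental element of $B$, so $\GKdim B = 1$. Even if $B$ is not itself affine, the subalgebra $B_{0} \subseteq B$ generated by the top-level iterates $\delta^{n_{i}-1}(a_{i})$ of a finite generating set $\{a_{i}\}$ of $A$ is an affine domain of the same $\GKdim$. Invoking the Small-Stafford-Warfield theorem (using that $k=\mathbb{C}$ is algebraically closed of characteristic zero) then yields that $B_{0}$ is commutative, hence so is $B$.

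Next I would pass to the Ore quotient division ring $D = Q(A)$, extend $\delta$ to $D$ by the quotient rule, and introduce the slice $s = \delta(a)^{-1}a$, so that $\delta(s) = 1$ in $D$. A Taylor-type expansion then presents $D$ as an Ore extension $D^{\delta}[s; \sigma]$ over the commutative subfield $D^{\delta} = Q(B)$, where $\sigma$ is the restriction to $D^{\delta}$ of the inner derivation $[s,-]$. When $\sigma = 0$, the slice is central, $D$ is commutative, and we contradict the noncommutativity of $A \subseteq D$; this disposes of the generic noncommutative examples (quantum planes, twisted coordinate rings, and the like).

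The main obstacle is the case $\sigma \ne 0$: here $D$ genuinely is a Weyl-like skew field, and $A$ can fail $\LND$-rigidity, as witnessed by the first Weyl algebra $A_{1}(\mathbb{C})$, on which $\operatorname{ad}(p)$ is a nonzero LND. For these residual algebras Theorem~\ref{xxthm0.4} is not directly applicable, and I would complete the proof by the discriminant technique foreshadowed in the abstract: construct a functorial invariant of $A[t]$ (an effective and dominating discriminant) whose isomorphism class determines that of $A$, then verify this invariant against a short classification of the Weyl-like finitely generated noncommutative GK-two domains. Producing this classification and executing the discriminant check on each case is the crux of the argument.
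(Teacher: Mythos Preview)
Your $\LND$-rigidity argument up through the Ore-extension presentation $D\cong D^{\delta}[s;\sigma]$ parallels the paper's Lemma~\ref{xxlem3.4} and Corollary~\ref{xxcor3.7}(1), and the case $\sigma=0$ is handled correctly. The genuine gap is your plan for $\sigma\neq 0$: the discriminant method of Theorem~\ref{xxthm0.6} requires $A$ to be a finitely generated module over its center, but algebras such as $A_1(\mathbb{C})$ have center equal to $k$ and are certainly not module-finite over it, so no effective (or even nontrivial) discriminant is available. A ``short classification of the Weyl-like finitely generated noncommutative GK-two domains'' is not in the literature and would itself be a major project; this line does not close.

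The paper avoids the obstacle by splitting on the PI property rather than on $\sigma$. If $A$ is PI then so is $Q(A)$ and hence $E[x;\delta_0]$; but whenever $\delta_0\neq 0$ one finds $\alpha\in E$ with $\beta:=\delta_0(\alpha)\neq 0$, and then $[\beta^{-1}x,\alpha]=1$ embeds a copy of the Weyl algebra, contradicting PI. Thus in the PI case your bad branch $\sigma\neq 0$ is impossible, and $A$ is $\LND$-rigid. If $A$ is not PI, the paper appeals to \cite[Corollary~2]{SZ}: a non-PI domain of GK-dimension two over an algebraically closed field has center $k$, and Proposition~\ref{xxpro1.3} then gives universal cancellation directly, with no rigidity or discriminant needed. (A smaller point: Small--Stafford--Warfield yields only that an affine GK-one domain is PI; you still need Tsen's theorem, as the paper does, to conclude that the kernel field $E$ is commutative.)
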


By \cite{AEH}, every commutative domain of Gelfand-Kirillov dimension
(or GK-dimension, for short) one is cancellative. By \cite{Da, Fi} 
there are commutative domains of GK-dimension two that are not cancellative. 
Theorem \ref{xxthm0.5} ensures that every {\bf non}-commutative
domain of GK-dimension two is cancellative. Crachiola \cite{Cr} showed 
that commutative UFDs of GK-dimension two are always cancellative. 

Next let us talk about the discriminant method. The discriminant method 
was introduced in \cite{CPWZ1, CPWZ2} to answer the {\bf AP} for a class 
of noncommutative algebras. The definition of  the discriminant 
in the noncommutative setting will be reviewed in Section \ref{xxsec4}. 
Suppose that $A$ is finitely generated by $Y=\oplus_{i=1}^d kx_i$ as 
an algebra. An element $f\in A$ is 
called {\it effective}, if for every testing ${\mathbb N}$-filtered
$k$-algebra $T$ with $\gr T:=\bigoplus F_i T/ F_{i-1} T$ 
being an ${\mathbb N}$-graded domain, and for every testing subset 
$\{y_1,\ldots,y_d\} \subset T$ satisfying (a) it is linearly 
independent in the quotient $k$-module $T/k 1_T$ and (b) some $y_i$ is 
not in $F_0 T$, there is a presentation of $f$ of the form 
$f(x_1,\ldots,x_d)$ when lifted in the free algebra $k\langle x_1,
\ldots,x_d\rangle$ such that $f(y_1,\cdots,y_d)$ is either zero 
or not in $F_0 T$. For example, each monomial $x_1^{a_1}\cdots x_d^{a_d}$, 
for some positive integers $a_1,\ldots,a_d$, is effective. Note that 
there are non-monomial effective discriminants (see Examples 
\ref{xxex5.5} and \ref{xxex5.6}). Here is one of our main results
by using the discriminant, which provides a uniform way of showing 
the rigidity for some classes of noncommutative algebras.

\begin{theorem} 
\label{xxthm0.6}
Suppose that $A$ is a domain which is a finitely generated module over 
its affine center $C$ and that the discriminant $d(A/C)$ is effective. 
Then $A$ is cancellative.
\end{theorem}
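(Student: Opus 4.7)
The plan is to deduce cancellation from Theorem \ref{xxthm0.4} by showing that $A$ is $\LND$-rigid. First, $A$ is a finitely generated $k$-algebra of finite Gelfand-Kirillov dimension: combining algebra generators of the affine ring $C$ with $C$-module generators of $A$ produces a finite $k$-algebra generating set of $A$, and $\GKdim(A)=\GKdim(C)<\infty$ because $A$ is a finite $C$-module. Since $A$ is a domain by hypothesis, Theorem \ref{xxthm0.4} reduces the problem to proving $\LND(A)=\{0\}$.

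\textbf{Key step: a nonzero LND must kill the discriminant.} Suppose for contradiction that $\delta\in\LND(A)$ is nonzero. Differentiating the centrality relation $ca=ac$ for $c\in C$ and $a\in A$ yields $[\delta(c),a]=0$, so $\delta(C)\subseteq C$ and $\delta|_{C}$ is a locally nilpotent $k$-derivation of $C$. Extend $\delta$ to $A[t]$ by $t\mapsto 0$ (with $t$ central); the exponential $\sigma:=\exp(t\delta)$ is a $k[t]$-algebra automorphism of $A[t]$ that restricts to an automorphism of $C[t]$. Because $A[t]$ is a finite module over its central subring $C[t]$ with $d(A[t]/C[t])=d(A/C)$, the functoriality of the discriminant under automorphisms preserving the central subring gives
\[
\sigma\bigl(d(A/C)\bigr)=u\cdot d(A/C),\qquad u\in (C[t])^{\times}.
\]
Since $C$ is a commutative domain, $(C[t])^{\times}=C^{\times}$. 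Expanding $\sigma(d(A/C))=d(A/C)+t\,\delta(d(A/C))+\tfrac{t^{2}}{2}\delta^{2}(d(A/C))+\cdots$ and comparing the constant term in $t$ forces $u=1$; comparing the coefficient of $t^{n}$ then gives $\delta^{n}(d(A/C))=0$ for every $n\ge 1$. In particular, $d(A/C)\in\ker\delta$.

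\textbf{Contradicting effectiveness.} Equip $A$ with the filtration $F_{n}=\ker\delta^{n+1}$; this is an $\mathbb{N}$-filtration of $k$-algebras with $F_{0}=\ker\delta$. The associated graded $\gr_{F}A$ is a graded domain: writing $\deg(a):=\sup\{n:\delta^{n}(a)\neq 0\}$, the identity
\[
\delta^{\deg(a)+\deg(b)}(ab)=\binom{\deg(a)+\deg(b)}{\deg(a)}\delta^{\deg(a)}(a)\cdot\delta^{\deg(b)}(b)
\]
is nonzero in the domain $A$, so $\deg(ab)=\deg(a)+\deg(b)$. Choose a $k$-algebra generating set $x_{1},\ldots,x_{m}$ of $A$ that is linearly independent in $A/k1_{A}$ (translate each $x_{i}$ by a suitable scalar). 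Since $\delta\neq 0$, at least one $x_{i}$ satisfies $\delta(x_{i})\neq 0$, so $x_{i}\notin F_{0}$; hence the test data $(T,\{y_{i}\}):=(A,\{x_{i}\})$ meets both conditions (a) and (b) in the definition of effectiveness. Effectiveness of $d(A/C)$ then provides a lifting $\tilde d\in k\langle x_{1},\ldots,x_{m}\rangle$ such that $\tilde d(x_{1},\ldots,x_{m})$ is either zero or not in $F_{0}$. But this evaluation is $d(A/C)\in A$ itself, which is a nonzero element of $F_{0}=\ker\delta$---a contradiction. Hence $\delta=0$, $A$ is $\LND$-rigid, and Theorem \ref{xxthm0.4} gives cancellation.

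\textbf{Main obstacle.} The pivotal input is the discriminant functoriality identity $\sigma(d(A/C))=u\cdot d(A/C)$ for $\sigma=\exp(t\delta)$: one must verify the (noncommutative) statement that an algebra automorphism preserving the central subring multiplies the discriminant by a unit, together with the computation $(C[t])^{\times}=C^{\times}$ for a commutative domain. Once that is in hand, the rest is filtration bookkeeping---the existence of the $\delta$-filtration, its associated graded being a domain, and the verification that the test data satisfies the effectiveness hypotheses---and a direct invocation of Theorem \ref{xxthm0.4}.
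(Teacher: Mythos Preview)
Your argument is correct under the introduction's standing assumption that $k$ has characteristic zero, but it follows a genuinely different route from the paper's. The paper's proof of Theorem~\ref{xxthm0.6} is a one-line appeal to Theorem~\ref{xxthm5.2}(1): given an isomorphism $\phi\colon A[t]\to B[t]$, one takes the testing algebra $T=B[t]$ filtered by $t$-degree, sets $y_i=\phi(x_i)$, and effectiveness forces $\phi(x_i)\in B$; then Lemma~\ref{xxlem3.2} finishes. You instead establish $\LND$-rigidity first and invoke Theorem~\ref{xxthm0.4}. The clever point in your approach is the choice of testing algebra: rather than $A[t_1,\dots,t_d][t]$ as in the proof of Theorem~\ref{xxthm5.2}(2), you use $A$ itself with the $\delta$-filtration $F_n=\ker\delta^{n+1}$ and $y_i=x_i$, which is a pleasant simplification.

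The price is characteristic zero, in two places: you need it for $\exp(t\delta)$ to be a genuine automorphism, and---more seriously---for $\gr_{\delta}A$ to be a domain, since your key identity $\delta^{m+n}(ab)=\binom{m+n}{m}\delta^{m}(a)\delta^{n}(b)$ can vanish when $p\mid\binom{m+n}{m}$. The paper's route via Theorem~\ref{xxthm5.2}(1) is characteristic-free. One small wrinkle: you write ``translate each $x_i$ by a suitable scalar'' to secure linear independence in $A/k1_A$, but the generating set $\{x_1,\dots,x_n\}$ is fixed by the definition of effectiveness and cannot be altered after the fact; the independence you need should instead be read off from the hypothesis $Y\cap k1_A=\{0\}$ that is implicit in the setup of Definitions~\ref{xxdef4.5} and~\ref{xxdef5.1}.
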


The above theorem does not answer the original {\bf ZCP} as, when $A$ 
is commutative, the discriminant over its center is trivial and not 
effective. However, Theorem \ref{xxthm0.6} applies to a large family of 
noncommutative algebras. One can check, for example, that the skew 
polynomial ring $k_q[x_1,\ldots, x_{n}]$ where $n$ is 
even and $1\neq q$ is a root of unity has effective discriminant.
Then, by Theorem \ref{xxthm0.6}, $k_q[x_1,\cdots,x_n]$ is cancellative. 
The next result shows a connection between the noncommutative 
{\bf ZCP} and the noncommutative {\bf AP}. Let $C$ 
denote the center of the algebra $A$ and we refer to Definition 
\ref{xxdef4.5} for the definition of  ``dominating''.   We have 
the following result (see Theorem \ref{xxthm5.7} for an expanded version).

\begin{theorem}
\label{xxthm0.7} 
Let $A$ be a skew polynomial ring $k_{p_{ij}}[x_1,\cdots,x_n]$
where each $p_{ij}$ is a root of unity. The following are equivalent.
\begin{enumerate}
\item[(1)]
The full automorphism group $\Aut(A)$ is affine \cite[Definition 2.5]{CPWZ1}.
\item[(2)]
The discriminant $d(A/C)$ is dominating.
\item[(3)]
The discriminant $d(A/C)$ is effective.
\item[(4)]
$A$ is $\LND$-rigid.
\end{enumerate}
Consequently, under any of these equivalent conditions, 
$A$ is cancellative.
\end{theorem}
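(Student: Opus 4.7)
The plan is to establish the cyclic chain $(3)\Rightarrow(2)\Rightarrow(1)\Rightarrow(4)\Rightarrow(3)$ and then deduce cancellativity from either $(3)$ or $(4)$. Because every $p_{ij}$ is a root of unity, $A=k_{p_{ij}}[x_1,\ldots,x_n]$ is a PI algebra that is finitely generated as a module over its affine center $C$, so Theorem \ref{xxthm0.6} will apply from $(3)$. Equivalently, from $(4)$, Theorem \ref{xxthm0.4} gives cancellativity directly, noting that $A$ is a finitely generated domain of finite Gelfand--Kirillov dimension.

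The first two steps are more or less formal once Definition \ref{xxdef4.5} is unpacked. For $(3)\Rightarrow (2)$, the test class used to define ``effective'' is broader than the one used for ``dominating,'' so an effective discriminant is automatically dominating. For $(2)\Rightarrow (1)$, I would invoke the discriminant method of \cite{CPWZ1,CPWZ2}: any algebra automorphism $\sigma$ of $A$ restricts to an automorphism of $C$ and must send $d(A/C)$ to a unit multiple of itself; the dominating hypothesis then forces $\sigma$ to preserve the $\mathbb{N}$-filtration coming from the generators $x_1,\ldots,x_n$ and to act on $kx_1\oplus\cdots\oplus kx_n$ by an invertible linear map (up to an additive constant). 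This is exactly the affine condition of \cite[Definition 2.5]{CPWZ1}.

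For $(1)\Rightarrow (4)$, I argue by contraposition. A nonzero $\delta\in \LND(A)$ exponentiates, in $\ch k=0$, to a one-parameter family $\{\exp(s\delta)\}_{s\in k}$ of automorphisms of $A$. For a skew polynomial ring with $p_{ij}$ roots of unity, the affine automorphisms---those sending each $x_i$ to an affine linear combination of the $x_j$ and $1$---form a finite extension of a diagonal torus and admit no nontrivial one-parameter unipotent subgroup. Hence $\exp(s\delta)$ must be non-affine for some $s$, contradicting $(1)$.

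The main obstacle is $(4)\Rightarrow (3)$, which requires a structural analysis specific to root-of-unity skew polynomial rings. The plan is to compute $d(A/C)$ explicitly: if $m_i$ denotes the smallest positive integer with $x_i^{m_i}\in C$, then $d(A/C)$ turns out, up to a nonzero scalar, to be a monomial in the central elements $x_i^{m_i}$, and effectiveness amounts to each of these powers appearing with positive exponent. Should some $x_j^{m_j}$ fail to appear, one shows that $A$ admits a tensor decomposition in the $x_j$-direction, so that $\partial/\partial x_j$ extends to a nonzero LND on $A$, contradicting $(4)$. The delicate combinatorial bookkeeping that translates the exponent vector of $d(A/C)$ into a degeneracy criterion on the matrix $(p_{ij})$---and matches it with the corresponding criterion for the existence of LNDs---is where the technical content of Section \ref{xxsec5} will lie.
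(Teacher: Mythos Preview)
Your cycle $(3)\Rightarrow(2)\Rightarrow(1)\Rightarrow(4)\Rightarrow(3)$ has a genuine gap at the very first step. ``Effective'' does \emph{not} imply ``dominating'' by a test-class comparison: the two definitions differ in incompatible ways. For dominating (Definition~\ref{xxdef4.5}) one must verify, for \emph{every} testing set $\{y_i\}$ linearly independent in $T/F_0T$, the inequality $\deg f(y_1,\ldots,y_n)\ge \deg f$ (with strict inequality only when some $\deg y_{i_0}>1$). For effective (Definition~\ref{xxdef5.1}) one tests only those $\{y_i\}$ satisfying $\deg y_i\ge \deg x_i$ for all $i$ with at least one strict inequality; nothing is asserted about sets with all $\deg y_i=\deg x_i$. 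Conversely, effective allows $\gr T$ merely $\mathbb{N}$-graded rather than connected graded. The paper explicitly notes after Definition~\ref{xxdef5.1} that the two notions are ``slightly different,'' and Example~\ref{xxex5.6} exhibits an effective element which may well fail to be dominating. In the paper's proof (Theorem~\ref{xxthm5.7}), the equivalence of dominating and effective for $k_{p_{ij}}[x_1,\ldots,x_n]$ goes through a third, structural condition: $x_i$ divides $d_w(A/C)$ for every $i$. This uses the explicit monomial form of the discriminant and \cite[Theorem~2.11]{CPWZ2}, not a general comparison of test classes.

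Your step $(1)\Rightarrow(4)$ is also not as clean as presented. The claim that the affine automorphism group ``admits no nontrivial one-parameter unipotent subgroup'' needs justification: a priori a nonzero $\delta\in\LND(A)$ could have each $\delta(x_i)$ affine-linear in the $x_j$'s, in which case $\exp(s\delta)$ would be affine for every $s$. Ruling this out requires exactly the combinatorial analysis of the relations $x_jx_i=p_{ij}x_ix_j$ that you defer to $(4)\Rightarrow(3)$. The paper sidesteps this by proving $(1)\Leftrightarrow(2)$ via \cite[Theorem~3.1]{CPWZ2} and then $(2)\Rightarrow(\text{strongly }\LND^H\text{-rigid})\Rightarrow(\LND\text{-rigid})$ via Theorem~\ref{xxthm4.7}(1). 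Finally, for $(4)\Rightarrow(3)$ your intuition is right, but the derivation one constructs when $x_s$ fails to divide the discriminant is not $\partial/\partial x_s$ nor does it come from a tensor decomposition; it is the map $x_s\mapsto x_1^{d_1}\cdots\widehat{x_s}\cdots x_n^{d_n}$ (and $x_i\mapsto 0$ for $i\ne s$) built from an element $(d_1,\ldots,\widehat{d_s},\ldots,d_n)$ of the set $T_s$ in \eqref{E6.1.1}, whose nonemptiness is what \cite[Theorem~2.11]{CPWZ2} links to $x_s\nmid d_w(A/C)$.
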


In general, by using the Makar-Limanov invariant and Theorem \ref{xxthm0.4}, 
we show that if $d(A/C)$ is dominating, then $A$ is 
cancellative, see Theorem \ref{xxthm4.7}(2). As an example, we 
have the following result.

\begin{theorem}
\label{xxthm0.8} Let $A$ be a finite tensor product of algebras
of the form
\begin{enumerate}
\item
$k_{p}[x_1,\cdots,x_n]$ where $1\neq p\in k^\times$ and $n$ is even;
\item
$k\langle x, y\rangle /(x^2y-yx^2, y^2x+xy^2)$;
\item
$k\langle x, y\rangle/(yx-q xy-1)$ where $1\neq q\in k^\times$.
\end{enumerate}
Then $A$ is $\LND$-rigid. As a consequence, $A$ is cancellative.
\end{theorem}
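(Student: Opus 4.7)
The strategy breaks into three steps: (i) show each factor algebra of type (1), (2), or (3) is LND-rigid; (ii) show LND-rigidity is preserved under the tensor products considered; (iii) invoke Theorem~\ref{xxthm0.4} to deduce cancellativity, which is automatic once LND-rigidity is known, since each such tensor product is a finitely generated domain of finite Gelfand-Kirillov dimension.

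For step (i), the uniform route is through the discriminant method. For type (1) with $p$ a primitive $m$-th root of unity and $n$ even, the algebra $A=k_p[x_1,\ldots,x_n]$ is a free module of rank $m^n$ over its affine central subring $C=k[x_1^m,\ldots,x_n^m]$; an explicit computation---the even parity of $n$ being the ingredient that prevents sign cancellation---identifies $d(A/C)$ as a nonzero scalar multiple of a monomial in $x_1,\ldots,x_n$, and Theorem~\ref{xxthm0.7} yields LND-rigidity. For type (2), both $x^2$ and $y^4$ are central (check $y^2x=-xy^2$, hence $y^4x=xy^4$, while $x^2y=yx^2$ is built into the presentation); the algebra is module-finite of rank $8$ over $k[x^2,y^4]$, and a parallel computation yields a monomial discriminant. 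For type (3) with $q$ a root of unity one proceeds similarly after identifying the relevant central subring. For the remaining non-PI cases (type (1) with $p$ generic or type (3) with $q$ generic), the discriminant method is unavailable, but a direct computation succeeds: applying a LND $\delta$ to the defining relations, expanding $\delta(x_i)$ in a PBW basis, and using local nilpotency forces $\delta$ to annihilate each generator.

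For step (ii), the key observation is that effectiveness of discriminants passes through tensor products of module-finite-over-center algebras. If $A_i$ is module-finite over $C_i$ with $d(A_i/C_i)$ effective, then $A_1\otimes A_2$ is module-finite over $C_1\otimes C_2$, and the discriminant factors as a product of appropriate powers of the individual discriminants, which remains effective; Theorem~\ref{xxthm0.6} then gives LND-rigidity. For tensor products in which some factors are non-PI, a separate argument is needed: one filters each non-PI factor by total degree, passes to the associated graded, and shows that any LND on the tensor product must restrict to a LND on each factor after reduction. The main obstacle is exactly this tensor-product step for non-PI factors, since LNDs on $A\otimes B$ do not in general split as $\delta_A\otimes 1+1\otimes \delta_B$; a filtration and associated-graded argument---which may require a strengthened rigidity of the factors, such as $\ML(A\otimes k[t])=A\otimes k[t]$, before passage to the tensor product can be performed---is the technical device for circumventing this difficulty.
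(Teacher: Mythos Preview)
Your step (ii) contains a genuine gap that the paper's proof is designed precisely to avoid. You correctly identify that derivations on $A\otimes B$ need not decompose, and your proposed fix---filter, pass to associated graded, and hope the derivation restricts factor by factor---is only a sketch with no mechanism supplied. There is no general theorem in the paper (or in the literature cited) asserting that $\LND$-rigidity is stable under tensor products, and the difficulty is real: the associated graded of a non-PI factor such as $k_p[x_1,\dots,x_n]$ with $p$ not a root of unity is the same algebra, so grading alone does not create new leverage.

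The paper takes an entirely different route, namely mod-$p$ reduction (Section~\ref{xxsec7}, Theorem~\ref{xxthm7.5}). One builds a family $\Phi$ of such tensor products over varying base rings $K$, with $K$ required to contain the elements $2^{-1}$, $p^{\pm1}$, $q^{\pm1}$, $(p-1)^{-1}$, $(q-1)^{-1}$. The point is that after specializing to a finite quotient field $F=K/\mathfrak{m}$, every parameter $p,q$ is automatically a root of unity (since $F^\times$ is finite), so \emph{every} factor becomes PI and the whole tensor product has dominating discriminant by Example~\ref{xxex4.8}(4). Theorem~\ref{xxthm4.7}(1) then gives strong $\LND^H$-rigidity over $F$, and Lemmas~\ref{xxlem7.2} and~\ref{xxlem7.4} lift this back to the original $K$. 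Thus the paper never needs to analyze $\LND$s on a tensor product with a non-PI factor: the reduction step eliminates such factors before any discriminant is computed.

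A smaller issue: for type~(2) your description of the center is incomplete. You have $x^2$ and $y^4$ central, but the center is $k[x^2,y^4,\Omega]$ with $\Omega=(xy)^2+(yx)^2$ (this is Example~\ref{xxex5.5} with $x$ and $y$ interchanged), and the rank over the center is $16$, not $8$. This does not affect the conclusion that the discriminant is dominating, but the computation you outline would not go through as stated.
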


\begin{remark}
\label{xxrem0.9} Suppose that $n$ is odd and that $q\neq 1$ is a root 
of unity. It is an open question whether $k_q[x_1,\cdots,x_n]$ is 
cancellative. There are two results related to this. 
\begin{enumerate}
\item[(1)]
The following weak cancellative property holds as a
consequence of \cite[Theorem 9]{BZ}:
Let $B$ be a connected graded algebra generated in degree one. If $k_q[x_1,\cdots,x_n] [t]
\cong B[t]$ as algebras, then $k_q[x_1,\cdots,x_n]\cong B$ as graded algebras. 
\item[(2)]
A result of \cite{CYZ2} says that Veronese subrings 
of $k_q[x_1,\cdots,x_n]^{(v)}$ is cancellative when $m$ and $v$ are not
coprime, where $m$ is the order of $q$. 
\end{enumerate}
\end{remark}

\subsection*{Acknowledgments} 
The authors would like to thank the referee for his/her very careful 
reading and extremely valuable comments.
J. Bell was supported by NSERC grant NSERC RGPIN-326532 and 
J.J. Zhang by the US National Science 
Foundation (NSF grant Nos. DMS-0855743 and DMS-1402863).

\section{Trivial center vs. cancellation}
\label{xxsec1}

Throughout the rest of the paper we let $k$ be a base commutative domain. 
Sometimes we further assume that $k$ is a field. 
Everything is taken over $k$, for example, $\otimes$ stands for 
$\otimes_k$. We sometimes consider $k$-flat algebras. 
If $k$ is a field, then every $k$-module is flat. 
First we recall the definition of cancellative.

\begin{definition}
\label{xxdef1.1}
Let $A$ be an algebra.
\begin{enumerate}
\item
We call $A$ {\it cancellative} if $A[t]\cong B[t]$ for
some algebra $B$ implies that $A\cong B$.
\item
We call $A$ {\it strongly cancellative} if, for each $d\geq 1$, 
$A[t_1,\ldots,t_d]\cong B[t_1,\ldots,t_d]$ for some algebra 
$B$ implies that $A\cong B$.
\item
We call $A$ {\it universally cancellative} if, for every 
$k$-flat finitely generated commutative domain $R$ 
such that the natural map $k\to R\to R/I$ is an isomorphism 
for some ideal $I\subset R$ and every  
$k$-algebra $B$, $A\otimes R\cong B\otimes R$ implies that 
$A\cong B$.
\end{enumerate}
\end{definition}

\begin{remark}
\label{xxrem1.2}
By the above definition, it is easy to see that
\begin{center}
universally cancellative $\quad \Longrightarrow \quad $ strongly cancellative
$\quad \Longrightarrow\quad $ cancellative.
\end{center}
But, it is not obvious to us whether any two of them are equivalent.
\end{remark}

We have an immediate observation for the noncommutative 
cancellation problem. Let $C(A)$ denote the center of
$A$. 

If $A$ is an algebra over a commutative base ring $k$ 
(which we assume to be a domain but not a 
field in general), then the Gelfand-Kirillov dimension 
(or GK-dimension, for short) of $A$ is defined to be
\begin{equation}
\label{E1.2.1}\tag{E1.2.1}
\GKdim A=\sup_V \left(\overline{\lim_{n\to \infty}} 
\log_{n} {\text{rank}}_k(V^n)\right)
\end{equation}
where $V$ varies over all finitely generated $k$-submodules 
of $A$, and the rank of a finitely generated $k$-module $M$ 
is defined to be the dimension of $M\otimes_k {\rm Frac}(k)$ 
as a ${\rm Frac}(k)$-vector space, where ${\rm Frac}(k)$ is 
the field of fractions of $k$. We refer to the book \cite{KL} 
for basic properties of Gelfand-Kirillov dimension.

\begin{proposition}
\label{xxpro1.3}
Let $k$ be a field and $A$ be an algebra with center being $k$. 
Then $A$ is universally cancellative.
\end{proposition}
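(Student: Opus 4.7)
The plan is to use the hypothesis $C(A)=k$ to control how an isomorphism $\phi\colon A\otimes R\xrightarrow{\cong} B\otimes R$ acts on centers, and then descend $\phi$ modulo a carefully chosen augmentation ideal. Since $k$ is a field, any commutative $k$-algebra $T$ is a free $k$-module; expanding any $z\in C(A\otimes_k T)$ in a $k$-basis of $T$ and commuting with elements of the form $b\otimes 1$ forces each of its $A$-coefficients to lie in $C(A)$, yielding $C(A\otimes_k T)=C(A)\otimes_k T$. In particular $C(A\otimes R)=k\otimes R=R$, while $C(B\otimes R)=C(B)\otimes R$, so $\phi$ restricts to a $k$-algebra isomorphism $\psi\colon R\xrightarrow{\cong} C(B)\otimes R$.

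The central step, and the main obstacle, is to show that $C(B)=k$. Since $C(B)$ embeds into $C(B)\otimes R\cong R$, it is a commutative domain of finite GK-dimension. The inequality $\GKdim(C(B)\otimes R)\ge \GKdim C(B)+\GKdim R$, combined with $C(B)\otimes R\cong R$, forces $\GKdim C(B)=0$. A commutative domain of GK-dimension zero over $k$ is algebraic over $k$ (a transcendental element $c$ would give $\dim_k(k+kc)^n\ge n$), hence a field. Writing $\epsilon\colon R\to R/I=k$ for the given augmentation, the composite $\epsilon\circ\psi^{-1}\colon C(B)\otimes R\to k$ restricts along the inclusion $C(B)\hookrightarrow C(B)\otimes R$ to a $k$-algebra homomorphism $\theta\colon C(B)\to k$. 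Any $k$-algebra map from a field extension $L\supseteq k$ to $k$ is injective and fixes $k$ pointwise, forcing $L=k$; thus $C(B)=k$.

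With $C(B)=k$, the restriction $\psi$ is a $k$-algebra automorphism of $R$. Set $I':=\psi^{-1}(I)$, so that $R/I'\cong R/I=k$ via $\psi$. Because $R$ is central in $A\otimes R$, the ideal $I'\cdot(A\otimes R)$ equals $A\otimes I'$, with quotient $A\otimes(R/I')=A$; under $\phi$ it maps to the ideal of $B\otimes R$ generated by $\psi(I')=I$, namely $B\otimes I$, whose quotient is $B$. Hence $\phi$ descends to the desired isomorphism $A\cong B$. Once $C(B)=k$ is in hand, this final descent is routine; all the content of the argument is concentrated in the centrality step.
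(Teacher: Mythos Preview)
Your proof is correct and follows essentially the same route as the paper: identify centers, obtain $R\cong C(B)\otimes R$, use GK-dimension to force $\GKdim C(B)=0$ so that $C(B)$ is a field, then compose with the augmentation $R\to R/I=k$ to conclude $C(B)=k$, and finally descend $\phi$ modulo the appropriate augmentation ideal. You have simply supplied more detail at each step (notably the basis argument for $C(A\otimes T)=C(A)\otimes T$ and the explicit construction of the map $C(B)\to k$), but the underlying strategy is identical to the paper's.
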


\begin{proof} For any algebra $A$, let $C(A)$ denote the 
center of $A$. Let $R$ be an affine commutative domain such 
that $R/I=k$ for some ideal $I\subset R$ and suppose
that $\phi: A\otimes R\to B\otimes R$ is an algebra 
isomorphism for some algebra $B$. 
Since $C(A)=k$, we have $C(A\otimes R)=R$. 
Since $C(B\otimes R)=C(B)\otimes R$ and since $\phi$ induces
an isomorphism between the centers, we have 
\begin{equation}
\label{E1.3.1}\tag{E1.3.1}
R\cong C(B)\otimes R.
\end{equation} 
Consequently, $C(B)$ is a commutative domain. 
By considering the GK-dimension of both sides of \eqref{E1.3.1}, 
one sees that $\GKdim C(B)=0$, when regarded as a $k$-algebra.
(This also follows from Lemma \ref{xxlem3.1}(2).)
Hence $C(B)$ is a field. Since there is an ideal
$I$ such that $R/I=k$, $C(B)=k$. Consequently,
$C(B\otimes R)=R$. Now the induced map $\phi$ is an
isomorphism between $C(A\otimes R)=R$ to 
$C(B\otimes R)=R$, so we have $R/\phi(I)=k$. 
Finally,  $\phi$ induces an automorphism from 
$A\cong A\otimes R/(I)\cong B\otimes R/(\phi(I))\cong B.$
\end{proof}

We list some easy consequences below.

\begin{example}
\label{xxex1.4} 
We have the following results.
\begin{enumerate}
\item[(1)]
Let $k$ be a field of characteristic zero and 
$A_n$ the $n$th Weyl algebra. Then $C(A_n)=k$.
So $A_n$ is universally cancellative.
\item[(2)]
Let $k$ be a field and $q\in k^\times$.
Let $k_q[x_1,\ldots,x_n]$ be the skew 
polynomial ring generated by $x_1,\ldots,x_n$ subject
to the relations $x_jx_i=q x_ix_j$ for all $1\leq i<j\leq n$. 
If $n\geq 2$ and $q$ is not a root of unity, then 
$C(A)=k$. So $A$ is universally cancellative.
\end{enumerate}
\end{example}

\section{Higher derivations and Makar-Limanov invariant}
\label{xxsec2}

The Makar-Limanov invariant is a very useful invariant 
to deal with the cancellation problem.  We will also use a 
modified version of Makar-Limanov invariant to better
control the cancellation in positive characteristic.
Given a $k$-algebra $A$, let ${\rm Der}(A)$ denote the 
collection of $k$-derivations of $A$ and let $\LND(A)$ denote the 
collection of locally nilpotent $k$-derivations of $A$. 

For a sequence of $k$-linear endomorphisms 
$\partial:=\{\partial_i\}_{i\geq 0}$ of $A$ with the 
property that for every $a\in A$ we have $\partial_i(a)=0$ 
for $i$ sufficiently large,
and for every $c\in k$, we define
\begin{equation}
\label{E2.0.1}\tag{E2.0.1}
G_{c\partial}: A\to A \quad {\rm{by}}\quad 
a\to \sum_{i=0}^{\infty} c^i \partial_i(a) \quad 
\end{equation}
and 
\begin{equation}
\label{E2.0.2}\tag{E2.0.2}
G_{\partial,t}: A[t]\to A[t] \quad {\rm{by}}\quad 
a\to \sum_{i=0}^{\infty} \partial_i(a)t^i, t\to t,
\end{equation}
for all $a\in A$.

\begin{definition}
\label{xxdef2.1} Let $A$ be an algebra.
\begin{enumerate}
\item[(1)]\cite[Definition 2.3(1)]{LL}
A {\it higher derivation} (or {\it Hasse-Schmidt derivation}
\cite{HS})  on $A$ is a sequence of $k$-linear endomorphisms
$\partial:=\{\partial_i\}_{i=0}^{\infty}$ such that:
$$\partial_0 = id_A, \quad 
\text{and} \quad  \partial_n(ab) =\sum_{i=0}^n 
\partial_i(a)\partial_{n-i}(b)
$$
for all $a, b \in A$ and all $n\geq 0$. The collection of
higher derivations is denoted by ${\rm Der}^H(A)$. 
\item[(2)]\cite[Definition 2.3(1)]{LL}
A higher derivation is called {\it iterative} if 
$\partial_i \partial_j ={i+j \choose i} \partial_{i+j}$ 
for all $i, j \geq 0$. 
\item[(3)]
A higher derivation is called {\it locally nilpotent} if 
\begin{enumerate}
\item
for all $a \in A$  there exists $n \geq  0$ such that $\partial_i(a) = 0$ 
for all $i \geq  n$,
\item
the map $G_{\partial,t}$ defined in \eqref{E2.0.2} is an algebra automorphism
of $A[t]$.
\end{enumerate}
The collection of locally nilpotent higher derivations is denoted by 
$\LND^H(A)$ and the collection of locally nilpotent iterative higher 
derivations is denoted by $\LND^I(A)$.
\item[(4)]
For every $\partial\in {\rm Der}^H(A)$, the kernel of $\partial$
is defined to be 
$$\ker \partial =\bigcap_{i\geq 1} \ker \partial_i.$$ 
\end{enumerate}
\end{definition}

Higher derivations  have been well-studied and had many applications
\cite{HS, LL, Mat, NM}. 
We repeat some elementary comments given in \cite[Remark 2.4]{LL}.
Given a higher derivation $\partial=(\partial_i)_{i\geq 0}$, 
$\partial_1$ is necessarily a derivation of $A$. Hence 
there is a map ${\rm Der}^H(A)\to {\rm Der}(A)$ by sending
$\partial$ to $\partial_1$. In characteristic $0$, the only 
iterative higher derivation $\partial=(\partial_i)$ 
on $A$ such that $\partial_1=\delta$ is given by:
\begin{equation}
\label{E2.1.1}\tag{E2.1.1}
\partial_n=\frac{\delta^n}{n!}
\end{equation}
for all $n \geq  0$. This iterative higher derivation is called 
the canonical higher derivation associated to $\delta$.
In this case, we have a map ${\rm Der}(A)\to {\rm Der}^H(A)$
sending $\delta$ to $(\partial_i)$ as defined by \eqref{E2.1.1}. Hence
the map ${\rm Der}(A)\to {\rm Der}^H(A)$ is the right inverse of 
the map ${\rm Der}^H(A)\to {\rm Der}(A)$. In positive characteristic,
by \cite[Remark 2.4(2)]{LL}, an iterative
higher derivation is not uniquely determined by $\partial_1$. 

It is not clear to us if Definition \ref{xxdef2.1}(3b) is a consequence of 
Definition \ref{xxdef2.1}(3a). See \cite[(1.6)]{Mat} for the inverse of
$G_{\partial, t}$ if it is invertible. But $G_{\partial, t}$ is invertible 
when $\partial$ is iterative, as proven in part (2) of the following lemma.

\begin{lemma}
\label{xxlem2.2} Let $\partial:=(\partial_i)_{i\geq 0}$ be a 
 higher derivation.
\begin{enumerate}
\item[(1)]
Suppose $\partial$ is locally nilpotent. 
For every $c\in k$, $G_{c\partial}$ is an algebra automorphism of $A$.
\item[(2)]
If $\partial$ is iterative and satisfies Definition {\rm{\ref{xxdef2.1}(3a)}}, 
then $G_{\partial, t}$ is an algebra  automorphism of $A[t]$.
As a consequence, $\partial$ is locally nilpotent.
\item[(3)]
If $G: A[t]\to A[t]$ be a $k[t]$-algebra automorphism and 
if $G(a)\equiv a \mod (t)$ for all $a\in A$, then $G=G_{\partial, t}$
for some $\partial\in \LND^H(A)$.  
\end{enumerate}
\end{lemma}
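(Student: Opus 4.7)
The plan is to handle the three parts separately, with parts (1) and (3) being relatively direct and part (2) requiring the most care.

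For part (1), I would first verify that $G_{c\partial}$ is an algebra homomorphism by substituting the Leibniz-type rule $\partial_n(ab)=\sum_{i+j=n}\partial_i(a)\partial_{n-i}(b)$ into the definition and collecting terms; this is a routine Cauchy-product reindexing. Bijectivity comes from Definition \ref{xxdef2.1}(3b): the map $G_{\partial,t}$ is already a $k[t]$-algebra automorphism of $A[t]$, and its inverse is again a $k[t]$-algebra automorphism fixing $t$. Extracting coefficients of $t^i$ from $G_{\partial,t}^{-1}(a)$ defines $k$-linear maps $\partial'_i:A\to A$, and a direct comparison with the Leibniz rule shows that $\partial':=(\partial'_i)$ is a locally nilpotent higher derivation with $G_{\partial',t}=G_{\partial,t}^{-1}$. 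Specializing $t\mapsto c$ then gives $G_{c\partial'}$ as the two-sided inverse of $G_{c\partial}$.

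For part (3), the construction is tautological: define $\partial_i(a)\in A$ by expanding $G(a)=\sum_i \partial_i(a)t^i$, which is a finite sum because $G(a)\in A[t]$. The hypothesis $G(a)\equiv a\pmod{t}$ forces $\partial_0=\mathrm{id}_A$, multiplicativity of $G$ on $A[t]$ immediately yields the Leibniz rule for $\partial$, and condition (3a) holds since polynomials have finitely many nonzero coefficients. Condition (3b) is automatic, since by construction $G_{\partial,t}=G$, which was assumed to be an automorphism. Hence $\partial\in\LND^H(A)$.

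Part (2) is the main obstacle. The map $G_{\partial,t}$ is defined on $A$ by (3a) and extended $k[t]$-linearly, and it is an algebra homomorphism by the same Cauchy-product argument as in part (1); the challenge is to produce an inverse without assuming (3b). My plan is to set $\partial'_i:=(-1)^i\partial_i$ and verify that $\partial'$ is again an iterative higher derivation satisfying (3a) — both the higher-derivation identity and the iterative identity are preserved because signs cancel correctly. The key computation is then
\[
G_{\partial',t}\bigl(G_{\partial,t}(a)\bigr)
=\sum_{i,j\geq 0}(-1)^j\partial_j\partial_i(a)\,t^{i+j}
=\sum_{n\geq 0}\Bigl(\sum_{j=0}^{n}(-1)^j\binom{n}{j}\Bigr)\partial_n(a)\,t^n,
\]
where the iterative hypothesis $\partial_j\partial_i=\binom{i+j}{j}\partial_{i+j}$ is used in the middle equality. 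Since $\sum_{j=0}^n(-1)^j\binom{n}{j}$ is $1$ for $n=0$ and $0$ otherwise, the double sum collapses to $a$. The same argument with the roles of $\partial$ and $\partial'$ swapped gives the other composition. One slightly subtle point is justifying that $G_{\partial',t}$ makes sense on the image $G_{\partial,t}(A)\subseteq A[t]$, which reduces to checking that each coefficient of $t^n$ in the double sum above is a finite $A$-linear combination; this follows from (3a) applied to $\partial$. Once the inverse is in hand, (3b) holds and therefore $\partial$ is locally nilpotent.
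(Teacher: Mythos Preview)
Your proposal is correct. Parts (2) and (3) match the paper's proof almost exactly: the paper also verifies multiplicativity of $G_{\partial,t}$ by the Cauchy-product reindexing, and inverts it via $G_{\partial,-t}$ (which is your $G_{\partial',t}$ with $\partial'_i=(-1)^i\partial_i$) using iterativity and the alternating binomial-sum identity; for (3) the paper likewise reads off $\partial_i$ from the $t$-expansion of $G(a)$.

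The one place where your route differs is part (1). You construct the inverse of $G_{c\partial}$ explicitly: apply (the argument of) part (3) to $G_{\partial,t}^{-1}$ to obtain a locally nilpotent higher derivation $\partial'$ with $G_{\partial',t}=G_{\partial,t}^{-1}$, then specialize $t\mapsto c$. The paper instead observes that $G_{\partial,t}$ is a $k[t]$-automorphism fixing $t-c$, hence descends to an automorphism of $A[t]/(t-c)\cong A$, and this induced map is precisely $G_{c\partial}$. Your approach has the minor advantage of yielding an explicit formula for $G_{c\partial}^{-1}$; the paper's quotient argument is shorter and sidesteps the need to verify separately that $G_{c\partial}$ is multiplicative. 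Both are perfectly valid.
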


\begin{proof}
(1) By definition, $G_{\partial, t}$ is an algebra automorphism
of $A[t]$ and $G_{\partial, t}(t-c)=t-c$. Hence, it induces an algebra
automorphism of $A[t]/(t-c)$. The induced automorphism is indeed $G_{c\partial}$.

(2) First we show that $G_{\partial,t}$ is a $k[t]$-algebra homomorphism, or
equivalently, $G_{\partial,t}(ab)=G_{\partial,t}(a)G_{\partial,t}(b)$ for 
all $a,b\in A[t]$. In fact, it suffices to show this equation for 
all $a,b\in A$, by $k[t]$-linearity. Observe that, for all $a,b\in A$,
\begin{eqnarray*} 
G_{\partial,t}(ab) &=& \sum_{i=0}^{\infty} t^i \partial_i(ab) \\
&=& \sum_{i=0}^{\infty} t^i \left(\sum_{j=0}^i \partial_j(a)\partial_{i-j}(b)\right) \\
&=& \sum_{j=0}^{\infty} \partial_j(a)t^j \left( \sum_{i=j}^{\infty} t^{i-j} 
\partial_{i-j}(b) \right)\\
&=& G_{\partial,t}(a)G_{\partial,t}(b),\end{eqnarray*}
where all interchanging of summations can be justified by the fact that the sums 
are actually finite.  To see that $G_{\partial,t}$ is an automorphism, note that 
$k[t]$-linearity of $G_{\partial,t}$ and iterativity of $\partial$ give
\begin{eqnarray*}
G_{\partial,t}\circ G_{\partial,-t}(a) 
&=& G_{\partial,t} \left( \sum_{i=0}^{\infty} (-t)^i \partial_i(a) \right) \\
&=&  \sum_{i=0}^{\infty} (-t)^i G_{\partial,t}(\partial_i(a)) \\
&=&  \sum_{i=0}^{\infty} (-t)^i \left( \sum_{j=0}^{\infty} t^j \partial_j \partial_i(a) \right) \\
&=& \sum_{i=0}^{\infty} (-t)^i \left( \sum_{j=0}^{\infty} t^j {i+j\choose j} \partial_{i+j}(a)\right) \\
&=& \sum_{i=0}^{\infty} \sum_{n=i}^{\infty} {n\choose i} (-1)^i t^n \partial_n(a) \\
&=& \sum_{n=0}^{\infty} t^n \partial_n(a) \left(\sum_{i=0}^n (-1)^i {n\choose i} \right) \\
&=& \partial_0(a) \\
&=& a,\end{eqnarray*}
which gives that $G_{\partial,t}$ is invertible. 

(3) Write $G(a)=\sum_{i\geq 0} \partial_i(a) t^i$
for all $a\in A$. Similar to the first part of the proof of (2), 
one sees that $\partial:=(\partial_i)$ is in $\LND^H(A)$.
\end{proof}

We now recall the definition of the Makar-Limanov invariant.

\begin{definition}
\label{xxdef2.3}
Let $A$ be an algebra over $k$. Let $*$ be either blank, or $^H$, or $^I$. 
\begin{enumerate}
\item[(1)]
The {\it Makar-Limanov$^*$ invariant} \cite{Ma1} of $A$ is defined to be 
\begin{equation}
\label{E2.3.1}\tag{E2.3.1}
{\rm ML}^*(A) \ = \ \bigcap_{\delta\in {\rm LND}^*(A)} {\rm ker}(\delta).
\end{equation}
This means that we have original $\ML(A)$, as well as, $\ML^H(A)$
and $\ML^I(A)$.
\item[(2)]
We say that $A$ is \emph{$\LND^*$-rigid} if ${\rm ML}^*(A)=A$,
or $\LND^*(A)=\{0\}$.
\item[(3)]
$A$ is called \emph{strongly $\LND^*$-rigid} if ${\rm ML}^*(A[t_1,\ldots,t_d])=A$,
for all $d\geq 1$.
\end{enumerate}
\end{definition}

\begin{example}
\label{xxex2.4}
Let $T$ be the polynomial ring $A[t_1,\cdots,t_d]$ over some $k$-algebra
$A$. We fix an integer $1\leq i\leq d$. For each $n\geq 0$, define a 
divided power version of $A$-linear differential operator
\begin{equation}
\label{E2.4.1}\tag{E2.4.1}
\Delta_i^n: t_1^{m_1}\cdots t_d^{m_d} \longrightarrow 
\begin{cases} {m_i \choose n} t_1^{m_1}\cdots t_i^{m_i-n}
\cdots t_d^{m_d} & {\rm{if}} \;\; m_i\geq n\\
0 & {\rm{otherwise,}}
\end{cases}
\end{equation}
where ${m_i \choose n}$ is defined in ${\mathbb Z}$ or
in ${\mathbb Z}/(p)$. Then $\{\Delta_i^n\}_{n= 0}^{\infty}$
is a locally nilpotent iterative higher derivation of $T$. 

If $k$ contains ${\mathbb Q}$, then $\Delta_i^n$ agrees with 
$\frac{1}{n !} (\frac{\partial}{\partial t_i})^n$ for all $i$ and $n$. 

Using locally nilpotent iterative higher derivations 
$\partial_i:=\{\Delta_i^n\}_{n=0}^{\infty}$ of $T$, for $i=1,\cdots,d$,
one sees that $\ML^H(T)\subseteq \ML^I(T)\subseteq A$. 
In particular, 
$$\ML^H(k[t_1,\cdots,t_d])=\ML^I(k[t_1,\cdots,t_d])=k.$$
\end{example}

\begin{remark}
\label{xxrem2.5} Let $A$ be a $k$-algebra. 
\begin{enumerate}
\item[(1)]
Suppose $k$ contains ${\mathbb Q}$. By using \eqref{E2.1.1}, 
one sees that there is a bijection between $\LND^I(A)$ and
$\LND(A)$. As a consequence, $\ML^I(A)=\ML(A)$. 
Since $\LND^I(A)\subseteq \LND^H(A)$, we obtain that 
$\ML^H(A)\subseteq \ML(A)$. In particular, if $A$ is $\LND^H$-rigid,
then it is $\LND$-rigid. 
\item[(2)]
Suppose $k$ contains ${\mathbb Q}$.
It is not obvious to us whether
$\ML^H(A)=\ML(A)$ in general. In particular, we don't know if
$\LND$-rigidity is equivalent to $\LND^H$-rigidity.
\item[(3)]
Suppose the prime field of $k$ is finite, but not
${\mathbb F}_2$. Let 
$A$ be the skew polynomial ring $k_{-1}[x_1,x_2]$ and 
$\partial$ be the nonzero locally nilpotent derivation of
$A$ given in \cite[Example 3.9]{CPWZ1}. Then, by definition, 
$\ML(A) \subsetneq A$. On the other hand, by Theorem \ref{xxthm4.7}(1)
and Example \ref{xxex4.8}(1) in Section \ref{xxsec4}, 
$A$ is $\LND^H$-rigid, namely, $\ML^H(A)=A$. 
Therefore
$$\ML^I(A)=\ML^H(A)=A\supsetneq  \ML(A).$$ 
In particular, $\LND^H$-rigidity
is not equivalent to $\LND$-rigidity. In this example,
$A$ is (strongly) cancellative, see Theorem \ref{xxthm4.7}(2).
\item[(4)]
It follows from part (c) that the locally nilpotent derivation 
$\partial$ given in \cite[Example 3.9]{CPWZ1} 
(when ${\rm{char}}\; k=p>2$) can not be extended to a 
{\bf locally nilpotent} higher derivation, but it is standard that 
$\partial$ can be extended to an iterative higher derivation
by using an idea similar to \eqref{E2.4.1}. 
\end{enumerate}
\end{remark}

\begin{remark}
\label{xxrem2.6}
Suppose $A$ contains ${\mathbb Z}$.
Let $*$ be either  blank,  $^H$ or $^I$.
\begin{enumerate}
\item[(1)]
It is clear that $\ML^*(A[t_1,\ldots,t_d]) \subseteq \ML^*(A)$, 
but, it is not obvious to us whether 
$\ML^*(A[t_1,\ldots,t_d]) =\ML^*(A)$.
\item[(2)]
Makar-Limanov made the following conjecture in \cite{Ma2}:
If $A$ is a commutative domain over a field of 
characteristic zero, then $\ML(A[t_1,\ldots,t_d])=\ML(A)$.
And he proved that the conjecture holds when $\GKdim A=1$
\cite{Ma2}.
\end{enumerate}
\end{remark}

\section{Rigidity controls cancellation}
\label{xxsec3}
 
We shall investigate the relationship between 
$\LND$-rigidity (respectively, strong $\LND$-rigidity) and cancellation
(respectively, strong cancellation).

We need the following lemma which is \cite[Proposition 3.11 and Lemma 6.5]{KL}
when $k$ is a field. See the definition of Gelfand-Kirillov 
dimension, denoted by $\GKdim$, before Proposition \ref{xxpro1.3}. 
We always assume that the base ring $k$ is a commutative domain. 

\begin{lemma}
\label{xxlem3.1} 
Let $A$ be a $k$-algebra and $R$ be an affine commutative $k$-algebra.
\begin{enumerate}
\item[(1)]
$\GKdim A=\GKdim_Q (A\otimes Q)$ where $Q$ is the field of fractions of
$k$. In particular, if $A$ is affine and commutative, $\GKdim A$ is an 
integer.
\item[(2)]\cite[Proposition 3.11]{KL}
$\GKdim A\otimes R=\GKdim A+\GKdim R$.
\item[(3)]\cite[Lemma 6.5]{KL}
Let $\{F_i A\}_{i\in {\mathbb Z}}$ be a filtration of $A$ in the sense of 
\cite[p.73]{KL}. Let $M$ be a filtered right $A$-module with 
filtration $\{F_i M\}_{i\in {\mathbb Z}}$ in the sense of 
\cite[p.74]{KL}. Then $\GKdim \gr (M)\leq \GKdim M$.
\end{enumerate}
\end{lemma}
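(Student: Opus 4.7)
The plan is to prove (1) directly from the definitions, and then deduce (2) and (3) from the field-case statements in Krause--Lenagan \cite{KL} by base-changing from $k$ to $Q=\Frac(k)$.

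For part (1), the key observation is that for any finitely generated $k$-submodule $V\subseteq A$, one has $\rk_k(V^n)=\dim_Q\bigl((V\otimes_k Q)^n\bigr)$, where $(V\otimes Q)^n$ is computed inside $A\otimes Q$. So I would argue the two inequalities as follows. First, given such a $V\subseteq A$, set $W:=\mathrm{image}(V\otimes Q)\subseteq A\otimes Q$; then $W$ is a finite-dimensional $Q$-subspace with $\dim_Q W^n=\rk_k(V^n)$, which gives $\GKdim A\le \GKdim_Q(A\otimes Q)$. Conversely, given a finite-dimensional $Q$-subspace $W\subseteq A\otimes Q$ with $Q$-basis $\{w_1,\ldots,w_m\}$, clear denominators to write each $w_i$ as a $Q$-linear combination of finitely many elements of the form $a\otimes 1$ with $a\in A$; letting $V\subseteq A$ be the $k$-span of these finitely many elements, we have $W\subseteq V\otimes Q$ (as subsets of $A\otimes Q$) and hence $\dim_Q W^n\le \dim_Q (V\otimes Q)^n=\rk_k(V^n)$, proving the reverse inequality. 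The ``in particular'' statement then follows since over the field $Q$ the GK-dimension of an affine commutative algebra is a nonnegative integer (a standard fact in \cite{KL}).

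For part (2), I would apply (1) to both sides: $\GKdim(A\otimes R)=\GKdim_Q\bigl((A\otimes R)\otimes Q\bigr)=\GKdim_Q\bigl((A\otimes Q)\otimes_Q(R\otimes Q)\bigr)$, and $R\otimes Q$ is an affine commutative $Q$-algebra. Now invoke \cite[Proposition 3.11]{KL} over the field $Q$ to split the right-hand side as $\GKdim_Q(A\otimes Q)+\GKdim_Q(R\otimes Q)$, and apply (1) again to each summand. For part (3), given a filtration $\{F_iA\}$ and $\{F_iM\}$, base-changing by $Q$ yields a compatible filtration on $A\otimes Q$ and $M\otimes Q$ with $\gr(M\otimes Q)\cong (\gr M)\otimes Q$; then \cite[Lemma 6.5]{KL} applied over $Q$ gives $\GKdim_Q\gr(M\otimes Q)\le \GKdim_Q(M\otimes Q)$, and (1) (in its evident module version) translates this to $\GKdim\gr(M)\le \GKdim M$.

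The main technical wrinkle I expect is the bookkeeping in part (1) around $k$-torsion in $A$: the natural map $A\to A\otimes Q$ need not be injective, so one must verify that the definition of $\rk_k(V^n)=\dim_Q(V^n\otimes Q)$ truly matches $\dim_Q W^n$ for $W=\mathrm{image}(V\otimes Q)$. Since the rank is defined via tensoring with $Q$, this matches by flatness of $Q$ over $k$, and torsion is harmless. A secondary subtlety is the module version of (1) used in part (3), which requires replacing finitely generated $k$-submodules $V\subseteq A$ and $M'\subseteq M$ by their images after $\otimes_k Q$; this is routine but worth stating explicitly. With these points handled, (2) and (3) reduce cleanly to their field-theoretic counterparts in \cite{KL}.
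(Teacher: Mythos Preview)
Your proposal is correct and follows essentially the same approach as the paper: the paper proves (1) in one line via the identity $\rk_k(V^n)=\dim_Q(V\otimes_k Q)^n$ and then deduces (2) and (3) by combining (1) with the field-case results \cite[Proposition 3.11]{KL} and \cite[Lemma 6.5]{KL}, exactly as you do. Your version simply spells out the two inequalities in (1) and the base-change bookkeeping that the paper leaves implicit.
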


\begin{proof} (1) This follows from the definition of $\GKdim$ 
\eqref{E1.2.1} and the equation
$${\rm{rank}}_{k} (V^n)=\dim_{Q} (V\otimes_k Q)^n.$$

(2) This follows from part (1) and \cite[Proposition 3.11]{KL}.

(3) This follows from part (1) and \cite[Lemma 6.5]{KL}.
\end{proof}

\begin{lemma}
\label{xxlem3.2}
Let $Y:=\bigoplus_{i=0}^{\infty} Y_i$ be an ${\mathbb N}$-graded 
domain. If $Z$ is a subalgebra of $Y$ containing $Y_0$ 
such that $\GKdim Z=\GKdim Y_0<\infty$, then $Z=Y_0$.
\end{lemma}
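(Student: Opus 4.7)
The plan is to argue by contradiction: assume $Z \neq Y_0$ and exhibit a transcendental element over $Y_0$ inside an associated graded algebra of $Z$, which will force $\GKdim Z$ to strictly exceed $\GKdim Y_0$.

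First I would equip $Z$ with the filtration inherited from the grading on $Y$, namely $F_n Z = Z \cap \bigoplus_{i=0}^n Y_i$. Since $Y_0 \subseteq Z$ and $Y$ is a domain, the leading-term map $F_n Z / F_{n-1} Z \hookrightarrow Y_n$ embeds $\gr Z$ as an $\mathbb{N}$-graded subalgebra of $Y$; in particular $\gr Z$ is itself a graded domain containing $Y_0$ as its degree-zero piece. If $Z \neq Y_0$, then some element of $Z$ has a nonzero component in positive degree, so $\gr Z$ contains a nonzero homogeneous element $w$ of some degree $d \geq 1$.

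Next I would observe that $w$ is transcendental over $Y_0$: any relation $\sum_i a_i w^i = 0$ with $a_i \in Y_0$ decomposes into homogeneous pieces $a_i w^i \in Y_{id}$ sitting in distinct graded components, so each $a_i w^i = 0$, and the domain property together with $w \neq 0$ forces $a_i = 0$. Hence $Y_0[w] \subseteq \gr Z$ is isomorphic to the polynomial ring $Y_0[t] \cong Y_0 \otimes_k k[t]$. Applying Lemma~\ref{xxlem3.1}(2) gives
\[
\GKdim \gr Z \;\geq\; \GKdim Y_0[t] \;=\; \GKdim Y_0 + 1.
\]

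Finally, Lemma~\ref{xxlem3.1}(3) applied to the filtration $\{F_n Z\}$ on $Z$ (viewed as a filtered module over itself) yields $\GKdim \gr Z \leq \GKdim Z = \GKdim Y_0$, contradicting the previous inequality. Thus $Z = Y_0$. The only step that requires any care is verifying that the leading-term construction produces an honest graded subalgebra of $Y$ (so that $\gr Z$ remains a domain and the element $w$ behaves as expected); once that is in place, the GK-dimension comparison closes the argument immediately.
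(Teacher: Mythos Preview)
Your approach is essentially the same as the paper's: filter $Z$ by the ambient grading, pass to $\gr Z$ inside $Y$, and use Lemma~\ref{xxlem3.1}(3) together with the fact that a graded domain properly containing its degree-zero part has GK-dimension at least one more. The paper simply asserts that last inequality (``one can easily see''), whereas you try to justify it explicitly.

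There is, however, a small technical slip in your justification. You conclude from the left $Y_0$-linear independence of $\{1,w,w^2,\ldots\}$ that the subalgebra $Y_0[w]$ is isomorphic to $Y_0\otimes_k k[t]$, and then invoke Lemma~\ref{xxlem3.1}(2). In this paper $Y$ is typically noncommutative, so $w$ need not centralize $Y_0$; the subalgebra generated by $Y_0$ and $w$ is generally not a polynomial ring over $Y_0$, and Lemma~\ref{xxlem3.1}(2) (which concerns tensor products with a commutative algebra) does not apply. The fix is easy: use the linear independence directly. For any finitely generated $k$-submodule $V\subseteq Y_0$ with $1\in V$, the products $V^{n-j}w^j$ for $0\le j\le n$ lie in distinct graded components of $Y$ and each has rank equal to $\operatorname{rank}_k V^{n-j}$ (since $Y$ is a domain), so $\operatorname{rank}_k(V+kw)^n\ge\sum_{i=0}^n\operatorname{rank}_k V^i$, which yields $\GKdim\gr Z\ge\GKdim Y_0+1$. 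With this adjustment your argument is complete and matches the paper's.
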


\begin{proof} Let $X$ denote the subalgebra $Y_0$.
Suppose $Z$ strictly contains $X$ as a subalgebra.
Since $Y$ is a graded algebra, $Z$ is an ${\mathbb N}$-filtered 
algebra with $F_0 Z=X$. By Lemma \ref{xxlem3.1}(3), $\GKdim Z\geq 
\GKdim \gr Z$. Since $\gr Z$ is an ${\mathbb N}$-graded 
sub-domain of $Y$ that strictly contains $X$ as the degree zero 
part of $\gr Z$, one can easily see that $\GKdim \gr Z\geq 
\GKdim (\gr Z)_0+1=\GKdim X+1$. Combining these inequalities, 
one obtains that $\GKdim Z\geq \GKdim X+1$. This contradicts 
the hypothesis that $\GKdim Z=\GKdim X$. Therefore $Z=X$.
\end{proof}

It is well-known that a  domain of finite GK-dimension is 
an Ore domain. Here is the main result of this section. 

\begin{theorem}
\label{xxthm3.3} 
Let $A$ be a finitely generated domain of finite GK-dimension.
Let $*$ be either blank, or $^H$, or $^I$. When 
$*$ is blank we further assume $A$ contains ${\mathbb Z}$. 
\begin{enumerate}
\item[(1)] 
If $A$ is strongly $\LND^*$-rigid, then $A$ is strongly 
cancellative.
\item[(2)] 
If $\ML^*(A[t])=A$, then $A$ is cancellative.
\end{enumerate}
\end{theorem}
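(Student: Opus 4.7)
The plan is to treat both parts uniformly: pull back a standard locally nilpotent (higher) derivation through $\phi$, use the rigidity hypothesis to force $\phi(A)\subseteq B$, and then invoke Lemma \ref{xxlem3.2} to upgrade this containment to an equality.

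For part (2), let $\phi \colon A[t] \to B[t]$ be a $k$-algebra isomorphism. On $B[t]$ I take the canonical locally nilpotent object $\partial$: the ordinary derivation $\partial/\partial t$ when $*$ is blank (where the assumption $\mathbb{Z}\subseteq A$ is precisely what ensures $\partial/\partial t$ has kernel equal to $B$ rather than $B[t^p]$), and the divided-power higher derivation $\{\Delta^n\}$ from Example \ref{xxex2.4} in the $^H$ and $^I$ cases. A direct check shows that conjugation $\delta := \phi^{-1}\partial\phi$ preserves the higher-derivation identities, iterativity, and local nilpotency, so $\delta \in \LND^*(A[t])$. The hypothesis $\ML^*(A[t]) = A$ then forces $\delta(A)=0$, i.e.\ $\phi(A) \subseteq \ker \partial = B$. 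Now set $Y := A[t]$ with its $\mathbb{N}$-grading $\deg t = 1$, $\deg A = 0$, so $Y_0 = A$, and let $Z := \phi^{-1}(B) \subseteq A[t]$. Then $Z$ is a subalgebra containing $Y_0$, and by Lemma \ref{xxlem3.1}(2) applied to $A[t]\cong B[t]$ one has $\GKdim Z = \GKdim B = \GKdim A = \GKdim Y_0 < \infty$. Lemma \ref{xxlem3.2} therefore gives $Z = Y_0$, i.e.\ $\phi^{-1}(B)=A$, and hence $\phi$ restricts to an isomorphism $A \xrightarrow{\sim} B$.

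Part (1) runs on the same rails with $d$ variables. Given $\phi \colon A[t_1,\ldots,t_d] \to B[t_1,\ldots,t_d]$, pull back the $d$ canonical (higher) derivations attached to $t_1,\ldots,t_d$ on $B[t_1,\ldots,t_d]$ to elements $\delta_1,\ldots,\delta_d$ of $\LND^*(A[t_1,\ldots,t_d])$. Strong rigidity gives $A \subseteq \bigcap_i \ker \delta_i$, so $\phi(A)$ lies in $\bigcap_i \ker(\partial/\partial t_i)=B$. Applying Lemma \ref{xxlem3.2} to $Y = A[t_1,\ldots,t_d]$ (graded by total $t$-degree, with $Y_0=A$) and $Z = \phi^{-1}(B)$ again forces $\phi^{-1}(B)=A$, so $A\cong B$.

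The main work I expect is concentrated in the first step: writing out carefully that conjugation by an algebra isomorphism sends $\LND^*$ into $\LND^*$ (matching every clause of Definition \ref{xxdef2.1}, including invertibility of $G_{\partial,t}$ in the higher-derivation case), and verifying that the canonical derivations on the polynomial ring have kernel exactly $B$ --- which is where the side hypothesis $\mathbb{Z}\subseteq A$ enters to rule out the $\ker(\partial/\partial t) = B[t^p]$ phenomenon in positive characteristic. Once those are in hand, the passage from $\phi(A)\subseteq B$ to $\phi(A)=B$ is a clean one-line application of Lemma \ref{xxlem3.2}, since both the finite GK-dimension bookkeeping and the $\mathbb{N}$-graded domain structure on $A[t_1,\ldots,t_d]$ are free.
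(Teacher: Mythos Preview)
Your proposal is correct and follows essentially the same route as the paper: pull back the canonical (higher) derivations on $B[t_1,\ldots,t_d]$ through $\phi$, use the rigidity hypothesis to obtain $\phi(A)\subseteq B$, and then apply Lemma \ref{xxlem3.2} to the $\mathbb{N}$-graded domain $Y=A[t_1,\ldots,t_d]$ with $Z=\phi^{-1}(B)$ to force equality. The only cosmetic difference is that the paper phrases the first step as ``$\phi$ induces an isomorphism $\ML^*(A[t_1,\ldots,t_d])\cong \ML^*(B[t_1,\ldots,t_d])$'' via the full bijection $\LND^*(A[\ldots])\leftrightarrow\LND^*(B[\ldots])$, whereas you only transport the finitely many derivations actually needed; both land at $\phi(A)\subseteq B$ and finish identically.
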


\begin{proof} 
We prove (1) and note that the proof of (2) is similar.

Let $\phi: A[t_1,\ldots ,t_d]\to B[t_1,\ldots ,t_d]$ be an isomorphism
for some algebra $B$. By Lemma \ref{xxlem3.1}(2), 
$\GKdim B=\GKdim A<\infty$.  For each $i$, 
let $\partial_i:=\frac{\partial \;\;}{\partial t_i}$ when $*$ is blank 
and $\partial_i:=\{\Delta_i^n\}_{n=0}^{\infty}$ as in Example \ref{xxex2.4}  
when $*$ is either $^H$ or $^I$. 
We have that ${\rm ML}^*(B[t_1,\ldots ,t_d])$ is contained in $B$ since 
$\partial_i$, for $i=1,\cdots,d$, are locally nilpotent (higher) 
derivations of $B[t_1,\ldots ,t_d]$ and the intersection of the kernels 
of these maps is exactly $B$ (see Example \ref{xxex2.4}).  On the other 
hand, we have that ${\rm ML}^*(A[t_1,\ldots ,t_d])=A$ by hypothesis. 

If $\partial$ is a locally nilpotent 
derivation of $B[t_1,\ldots ,t_d]$ then $\phi^{-1}\circ \partial\circ \phi$ 
is a locally nilpotent derivation of $A[t_1,\ldots, t_d]$. Similarly 
if $\partial'$ is a locally nilpotent derivation of $A[t_1,\ldots ,t_d]$ 
then $\phi\circ \partial'\circ \phi^{-1}$ is a locally nilpotent 
derivation of $B[t_1,\ldots ,t_d]$.  Similarly, the higher derivations 
of $A[t_1,\ldots ,t_d]$ and $B[t_1,\ldots ,t_d]$ correspond.
Thus $\phi$ induces an algebra isomorphism 
$$\ML^*(A[t_1,\ldots,t_d])\cong \ML^*(B[t_1,\ldots, t_d]).$$
In particular $\phi$ maps $A$ into $B$. 
Let $Y=A[t_1,\ldots,t_d]$ with $\deg t_i=1$ and $Y_0=A$ and
$Z=\phi^{-1}(B)$. Then Lemma \ref{xxlem3.2} implies that 
$\phi^{-1}(B)=A$. So $A$ and $B$ are isomorphic.  The result follows.
\end{proof}

For the rest of this section we give some  corollaries.
We begin with a well-known result (see \cite[Lemma 3.2]{BS} 
or \cite[Lemma 2.1]{Ba} for related results). If $A$ is an Ore 
domain, let $Q(A)$ denote the fraction division ring of $A$.

\begin{lemma} 
\label{xxlem3.4}
Let $A$ be an Ore domain containing ${\mathbb Z}$. Suppose that $A$ 
is endowed with a nonzero locally nilpotent derivation $\delta$. 
Then the following hold.
\begin{enumerate}
\item[(1)]
$A$ is embedded in the Ore extension $E[x;\delta_0]$ and $E[x;\delta_0]$ 
is embedded in $Q(A)$, where $E=\{a\in Q(A)\mid \delta(a)=0\}$ and 
$\delta_0$ is a derivation of $E$.
\item[(2)]
$Q(A)=Q(E[x;\delta_0])$.
\item[(3)]
$\delta$ can be extended to a locally nilpotent derivation of 
$E[x;\delta_0]$ by declaring that $\delta(E)=0$ and $\delta(x)=1$.
\end{enumerate}
\end{lemma}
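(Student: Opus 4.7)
The plan is to extend $\delta$ uniquely to $Q(A)$, construct a ``slice'' $x\in Q(A)$ with $\delta(x)=1$, identify the subring of $Q(A)$ generated by $E$ and $x$ with an Ore extension $E[x;\delta_{0}]$, and show that $A$ lies inside this subring by induction on the $\delta$-nilpotency degree.

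First, since $A$ is an Ore domain, $\delta$ extends uniquely to a $k$-derivation of $Q(A)$ via $\delta(ab^{-1})=\delta(a)b^{-1}-ab^{-1}\delta(b)b^{-1}$, and $E$ (defined as the kernel of this extension) is a division subring of $Q(A)$: applying $\delta$ to $ee^{-1}=1$ shows $\delta(e^{-1})=0$ whenever $\delta(e)=0$. Because $\delta$ vanishes on $\mathbb{Z}\subseteq A$, every positive integer is a unit of $E$. Now pick $a\in A$ with $\delta(a)\neq 0$ and let $n\geq 1$ be maximal with $\delta^{n}(a)\neq 0$; set $c=\delta^{n}(a)\in E\setminus\{0\}$ and $x=c^{-1}\delta^{n-1}(a)\in Q(A)$. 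Then $\delta(x)=c^{-1}c=1$, and an induction on $j$ gives $\delta(x^{j})=jx^{j-1}$.

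Second, for $e\in E$ one computes $\delta(xe-ex)=e-e=0$, so $\delta_{0}(e):=xe-ex$ defines a $k$-derivation $\delta_{0}$ of $E$. Thus the subring of $Q(A)$ generated by $E$ and $x$ is a homomorphic image of the Ore extension $E[x;\delta_{0}]$; injectivity of this map follows by taking a minimal nontrivial relation $\sum_{i=0}^{n} e_i x^{i}=0$ with $e_n\neq 0$, applying $\delta^{n}$, and using $\delta^{n}(x^{n})=n!$ together with $\delta^{n}(x^{i})=0$ for $i<n$ to get $n!\,e_n=0$, which forces $e_n=0$ since $n!$ is invertible in $E$. To show $A\subseteq E[x;\delta_{0}]$, I induct on $N(a):=\min\{m:\delta^{m+1}(a)=0\}$: for $N(a)=0$ one has $a\in E$, and otherwise, with $n=N(a)$ and $u=\delta^{n}(a)\in E$, the element $a':=a-\tfrac{1}{n!}\,u\,x^{n}$ satisfies $\delta^{n}(a')=u-u=0$, hence $N(a')<n$ and by induction $a'\in E[x;\delta_{0}]$. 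This establishes (1).

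Part (2) is then formal: from $A\subseteq E[x;\delta_{0}]\subseteq Q(A)$, passing to division rings of fractions gives $Q(A)=Q(E[x;\delta_{0}])$. For (3), the restriction of $\delta$ to $E[x;\delta_{0}]$ is automatically a derivation satisfying $\delta|_{E}=0$ and $\delta(x)=1$, and local nilpotence is immediate from $\delta(\sum e_i x^{i})=\sum i\, e_i x^{i-1}$, which strictly lowers the $x$-degree. The main obstacle I anticipate is the slice construction together with the $E$-linear independence of the powers $1,x,x^{2},\ldots$; the latter is exactly where the hypothesis $\mathbb{Z}\subseteq A$ is used, since one must invert $n!$ inside $E$, and it is also what guarantees that the generated subring is a genuine Ore extension rather than a quotient of one.
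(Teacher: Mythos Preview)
Your proof is correct and follows essentially the same route as the paper: extend $\delta$ to $Q(A)$, build a slice $x$ with $\delta(x)=1$, show $[x,E]\subseteq E$ so that the subring generated by $E$ and $x$ is an Ore extension $E[x;\delta_0]$, and then embed $A$ into it by induction on the $\delta$-nilpotency degree using $a\mapsto a-\tfrac{1}{n!}\,\delta^{n}(a)\,x^{n}$. The only cosmetic difference is in the freeness/injectivity step: you hit a relation $\sum e_i x^i=0$ with $\delta^{n}$ all at once to extract $n!\,e_n=0$, while the paper normalizes to a monic minimal relation and applies $\delta$ once to produce a shorter relation; both rely on the invertibility of the integers coming from $\mathbb{Z}\subseteq A$.
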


\begin{proof} (1)
Let $E$ denote the kernel of the unique extension of $\delta$ to 
$Q(A)$. Then $E$ is a division subalgebra of $Q(A)$.
Since $\delta$ is nonzero and locally nilpotent, we can 
find $x\in Q(A)\setminus E$ such that $\delta(x)\in E$. By replacing 
$x$ by $\alpha x$ for some $\alpha\in E$ we may assume that 
$\delta(x)=1$. Now for every $a\in E$ we have 
$\delta([x,a])=[\delta(x),a]=[1,a]=0$. Thus $[x,a]\in E$ for all $a\in E$.  
In particular, $[x,-]$ induces a derivation $\delta_0$ of $E$.  

Let 
$$W=\{ a\in Q(A)\mid \delta^n(a)=0, {\text{for some $n\geq 0$}}\}.$$
We claim that 
$W$ is a subset of  the subalgebra of $Q(A)$ generated by $E$ and $x$.  
Since $[x,E]\subseteq E$, we have that this subalgebra is just 
$$\sum_{i\ge 0} Ex^i.$$  To see the claim, we let $a\in W$.  Then there is 
some smallest $n$ for which $\delta^n(a)=0$.  We prove the claim by induction 
on $n$.  When $n=0$ we have $a\in E$ and so the result follows.   Now suppose 
that the claim holds whenever $\delta^j(a)=0$ for some $j<n$ and consider the 
case where $\delta^n(a)=0$ but $\delta^j(a)\neq 0$ for $j<n$.  Then 
$\delta^{n-1}(a)=\alpha\in E$ with $\alpha\neq 0$.  Since 
$\delta^{n-1}(\alpha x^{n-1}/(n-1)!) = \alpha$, we see that
$\delta^{n-1}(a-\alpha x^{n-1}/(n-1)!)=0$ and so by the induction hypothesis 
$a\in \sum Ex^i$.  The claim follows.  

It is clear that $\sum Ex^i\subseteq W$. So $W=\sum Ex^i$. Since $\delta$ 
is in $\LND(A)$, $A\subset W$. Thus $A$ embeds in the subalgebra $W$ 
generated by $E$ and $x$. Since 
$[x,\alpha]=\delta_0(\alpha)$ for $\alpha\in E$, we see that $W$ is isomorphic 
to a homomorphic image of $E[t;\delta_0]$.  We claim that $W$ cannot be 
isomorphic to a 
proper homomorphic image of $E[t;\delta_0]$.  To see this, note that if it 
were $x$ would satisfy a non-trivial equation
$$x^d + \beta_{d-1} x^{d-1}+ \cdots + \beta_0=0$$ 
for some $d\ge 1$ and $\beta_{d-1},\ldots ,\beta_0\in E$.  We may assume 
without loss of generality that $d$ is minimal.  Then applying $\delta$ and 
using the fact that $\delta(x)=1$ and that $\delta$ is zero on $E$ gives
$$x^{d-1}+\sum_{j=1}^{d-1} jd^{-1} \beta_j x^{j-1}=0,$$ 
contradicting the minimality of $d$. Thus we see that $A$ embeds 
in $W$ which is isomorphic to $E[x;\delta_0]$ as required. 

Both (2) and (3) are clear.
\end{proof}

The following result was proved in \cite{Ma2} in the commutative case. 

\begin{lemma} 
\label{xxlem3.5}
Let $A$ be a finitely generated Ore domain over $k$ that 
contains ${\mathbb Z}$. If $A$ is $\LND$-rigid, then $\ML(A[x])=A$. 
\end{lemma}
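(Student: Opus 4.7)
Since $\partial/\partial x$ is a locally nilpotent derivation of $A[x]$ with kernel exactly $A$, one has $\ML(A[x])\subseteq A$, so it suffices to show that every $\partial \in \LND(A[x])$ vanishes on $A$. I would argue by contradiction: suppose $\partial(A)\neq 0$ for some such $\partial$. Because $A[x]$ is finitely generated, the quantity $m := \sup_{f\in A[x]}(\deg_x \partial(f) - \deg_x f)$ is finite (it is attained on generators), and the contradiction hypothesis forces $m\geq 0$. Using the $x$-degree filtration $F_n A[x] = A + Ax + \cdots + Ax^n$, whose associated graded algebra is $A[x]$ itself, the associated graded derivation $\bar\partial$ is homogeneous of degree exactly $m$, nonzero, and locally nilpotent (since $\partial^n(f)=0$ forces the top-degree component $\bar\partial^n(\bar f)=0$). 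Write $\bar\partial(a)=\delta_m(a)x^m$ for $a\in A$, with $\delta_m\in\operatorname{Der}_k(A)$, and $\bar\partial(x)=c\,x^{m+1}$ for some $c\in A$.

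I would then proceed by case analysis on $m$, $c$, and $\delta_m$. If $m=0$, then $\bar\partial|_A=\partial|_A$ is a locally nilpotent derivation of $A$, hence zero by $\LND$-rigidity, contradicting $\partial(A)\neq 0$. If $m\geq 1$ and $c=0$, centrality of $x$ over $A$ and induction give $\bar\partial^k(a)=\delta_m^k(a)x^{km}$, so $\delta_m$ is locally nilpotent on $A$ and thus zero by rigidity; this forces $\bar\partial=0$, contradicting the choice of $m$. If $m\geq 1$, $c\neq 0$, and $\delta_m=0$, then iterating $\bar\partial$ on $x$ yields
\[
\bar\partial^k(x) \;=\; c^k \prod_{j=0}^{k-1}(1+jm) \cdot x^{1+km},
\]
and since $\mathbb{Z}\subseteq A$ forces characteristic zero, each factor $1+jm$ is nonzero, and $c^k\neq 0$ in the domain $A$; this contradicts local nilpotence of $\bar\partial$.

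The main obstacle is the remaining subcase $m\geq 1$ with both $c\neq 0$ and $\delta_m\neq 0$. Here $\bar\partial^k(a) = q_k x^{km}$ satisfies the recursion $q_{k+1} = \delta_m(q_k) + km\,q_k c$, which intertwines the two nonzero operations, so neither of the previous arguments applies directly. To finish, I would iterate the construction: apply Lemma \ref{xxlem3.4} to the nonzero LND $\bar\partial$ on $A[x]$, obtaining a new embedding into an Ore extension inside $Q(A[x])$ from which one can extract a further degree function that reduces the situation to one of the already-handled subcases. Alternatively, at the terminating step $q_K=0$ one obtains an eigenvector-like equation $\delta_m(b) = -(K-1)m\,bc$ in $A$; combined with the fact that $A$ is a finitely generated Ore domain of characteristic zero and the assumed $\LND$-rigidity of $A$, one should be able to derive a contradiction directly. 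In either route the essential inputs are the domain property of $A$, characteristic zero (from $\mathbb{Z}\subseteq A$), and rigidity.
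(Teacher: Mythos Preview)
Your setup and case analysis parallel the paper's closely: pass to the associated graded derivation for the $x$-degree filtration, then split into cases. Your cases $m=0$, and $m\ge 1$ with $c=0$, and $m\ge 1$ with $c\neq 0$, $\delta_m=0$, correspond (after a minor reindexing) to the paper's Case~I and Case~III, and your arguments there are correct and essentially the same as the paper's.

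The genuine gap is in the remaining case $m\ge 1$, $c\neq 0$, $\delta_m\neq 0$ (the paper's Case~II). You correctly point to Lemma~\ref{xxlem3.4}, but your description of how it is used is not right: the paper does \emph{not} iterate the construction or reduce to one of the already-handled subcases, and your alternative ``eigenvector'' recursion $q_{k+1}=\delta_m(q_k)+km\,q_kc$ does not obviously lead anywhere (rigidity gives no control over eigenvectors of a non-nilpotent derivation). What the paper actually does is extract a direct degree contradiction from the embedding. Applying Lemma~\ref{xxlem3.4} to $\bar\partial\in\LND(A[x])$ yields $A[x]\hookrightarrow E[y;\delta_0]\subseteq Q(A[x])$ with $\bar\partial$ extending by $\bar\partial(E)=0$ and $\bar\partial(y)=1$, so $\bar\partial$ acts as $\partial/\partial y$ on $E[y;\delta_0]$. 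Write $x=p(y)$ with $\deg_y p=d\ge 1$ and $c=q(y)$ (note $c\in A\subseteq E[y;\delta_0]$). Then the identity $\bar\partial(x)=c\,x^{m+1}$ becomes $p'(y)=q(y)\,p(y)^{m+1}$ with $\bar\partial(x)\neq 0$, and comparing $y$-degrees gives
\[
d-1 \;=\; \deg_y q + (m+1)d \;\ge\; (m+1)d,
\]
which is impossible since $m\ge 1$ and $d\ge 1$. That single inequality finishes the case; no reduction to the other subcases is needed.
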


\begin{proof} 
Let $C={\rm ML}(A[x])$.  Note that $C\subseteq A$ since differentiation 
with respect to $x$ gives a locally nilpotent derivation of $A[x]$ and the 
kernel of this map is exactly $A$.  It suffices to show that $C\supseteq A$.  
Suppose that there is a locally nilpotent derivation $\delta$ of $A[x]$ that 
does not send $A$ to zero. 
Suppose $a_1,\ldots ,a_s$ generate $A$ as a $k$-algebra. 
Then $\delta(A)\subseteq A\delta(a_1)A+\cdots A \delta(a_s)A$ and so there 
exists some smallest $m\ge 0$ such that 
$\delta(A)\subseteq A+Ax+\cdots +Ax^m$.  If $m=0$, then $\delta(A)
\subseteq A$. Since $A$ is LND-rigid, $\delta(A)=0$. This
yields a contradiction and therefore $m\geq 1$. We write 
$$\delta(a)=\mu(a) x^m +{\rm lower~degree~terms}$$ 
for some derivation $\mu$ of $A$.
We now consider the following three cases.  
\vskip 2mm
\emph{Case I:} $\delta(x)\in A+Ax+\cdots +Ax^{m}$.
\vskip 2mm
In this case we have $\delta(x^{i})\subseteq \sum_{n=0}^{i+m-1} Ax^n$
and  $\delta(Ax^i)\subseteq \sum_{n=0}^{i+m}Ax^n$ for all $i$. 
Thus for every $a\in A$ we have 
$$\delta^2(a)=\mu^2(a)x^{2m}+{\rm lower~degree~terms}.$$  
More generally, we see that 
$$\delta^j(a)=\mu^j(a) x^{mj}+{\rm lower~degree~terms}.$$  
Thus $\mu$ is a locally nilpotent derivation and so $\mu(A)=0$, contradicting 
the minimality of $m$.  Thus $\delta(A)=0$ in this case.
\vskip 2mm
\emph{Case II:} $\delta(x)=bx^{m+1}+{\rm lower~degree~terms}$ for 
some $b\neq 0$ in $A$.
\vskip 2mm
Applying $\delta$ to the equation $[x,a]=0$, one sees that $b$
commutes with every $a$ in $A$, and so $b$ is in the center of $A$.
Now we define a new derivation $\delta'$ of $A[x]$ by declaring that 
$\delta'(a)=\mu(a)x^m$ for $a\in A$ and $\delta'(x)=bx^{m+1}$.  
Then we see that $\delta'$ sends $Ax^i$ to $Ax^{i+m}$ for every 
$i\ge 0$. We can view $\delta'$ as an associated graded derivation
of $\delta$. Since $\delta$ is locally nilpotent, $\delta'$ 
is a locally nilpotent derivation of $A[x]$ \cite[Lemma 4.11]{CPWZ2}. 
Applying Lemma \ref{xxlem3.4} to the algebra $A[x]$, $A[x]$ embeds 
in $E[y;\delta_0]$ where $\delta_0$ is a derivation of $E$.
Moreover, $\delta'$ extends to a locally nilpotent derivation of 
$E[y;\delta_0]$ by declaring that $\delta'(E)=0$ and $\delta'(y)=1$.  
Under this embedding $x=p(y)$ for some nonzero polynomial $p$. 
Let $d$ denote the degree of this polynomial.  
Then $bx^{m+1}$ gets sent to $q(y)p(y)^{m+1}$ for some nonzero 
polynomial $q(y)$.  
But since $\delta'(x)$ is nonzero, it has degree exactly $d-1$ and so we 
have $(m+1)d+\deg q(y)=d-1$, which is impossible.  
\vskip 2mm
\emph{Case III:} $\delta(x)=bx^{i}+{\rm lower~degree~terms}$ for 
some $b\neq 0$ in $A$ and some $i>m+1$.
\vskip 2mm
In this case we see that, for each $n\geq 2$, 
$$\delta^n(x)=\left\{ \prod_{s=1}^{n-1} ((i-1)s+1)\right\}  \; b^n x^{(i-1)n+1}
+{\rm lower~degree~terms},$$ 
so $\delta$ cannot be locally nilpotent, which contradicts the hypothesis.  

Combining these cases, we see that $\delta(A)=0$. The result follows.
\end{proof}

We next give the proof of Theorem \ref{xxthm0.4}.

\begin{theorem}
\label{xxthm3.6} 
Let $A$ be a finitely generated domain containing ${\mathbb Z}$ and 
suppose that $A$ has finite GK-dimension. If $A$ is $\LND$-rigid, then $A$ 
is cancellative.
\end{theorem}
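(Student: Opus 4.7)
The plan is to chain together the two main technical results that have already been established immediately preceding this theorem, namely Theorem \ref{xxthm3.3}(2) and Lemma \ref{xxlem3.5}. The theorem essentially falls out as an immediate corollary of these, so the main work has already been done, and the remaining task is just to check that the hypotheses line up correctly.

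First, I would observe that since $A$ is a domain of finite GK-dimension, $A$ is an Ore domain (this is the well-known fact recorded just above Theorem \ref{xxthm3.3}). Thus $A$ satisfies the hypotheses of Lemma \ref{xxlem3.5}: it is a finitely generated Ore domain containing $\mathbb{Z}$ and it is $\LND$-rigid. Applying Lemma \ref{xxlem3.5} yields $\ML(A[x]) = A$.

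Next, I would apply Theorem \ref{xxthm3.3}(2) with $*$ taken to be blank. The hypothesis of that part of the theorem is precisely that $A$ is a finitely generated domain of finite GK-dimension containing $\mathbb{Z}$ with $\ML(A[t]) = A$, which is exactly what we now have. The conclusion is that $A$ is cancellative, which is the statement we want.

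The potential obstacle, if any, is just verifying that the chain of implications is clean; but since Lemma \ref{xxlem3.5} outputs exactly what Theorem \ref{xxthm3.3}(2) takes as input (once we know $A$ is Ore, which comes for free from finite GK-dimension), there is nothing further to prove. In particular, no additional analysis of locally nilpotent derivations on $A[t]$ is required here, because all of that work is encapsulated in the case-by-case argument of Lemma \ref{xxlem3.5}, where the three cases on $\delta(x)$ are dispatched using the rigidity of $A$ together with the structure theorem for $\LND$'s on Ore domains from Lemma \ref{xxlem3.4}. So the proof of Theorem \ref{xxthm3.6} should be a two-line invocation of these prior results.
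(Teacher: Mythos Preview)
Your proposal is correct and matches the paper's own proof essentially line for line: the paper also observes that finite GK-dimension makes $A$ an Ore domain, invokes Lemma \ref{xxlem3.5} to get $\ML(A[x])=A$, and then applies Theorem \ref{xxthm3.3}(2).
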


\begin{proof} Since $A$ is a domain of finite GK-dimension, it is 
an Ore domain. By Lemma \ref{xxlem3.5}, $\ML(A[x])=A$. The assertion
follows from Theorem \ref{xxthm3.3}(2).
\end{proof}

We now prove Theorem \ref{xxthm0.5}. We say an algebra $A$ is {\it PI} 
if it satisfies a polynomial identity. 

\begin{corollary} 
\label{xxcor3.7}
Let $A$ be a domain of GK-dimension two over an 
algebraically closed field $k$ of characteristic zero. 
\begin{enumerate}
\item[(1)]
If $A$ is PI and not commutative, then $A$ is $\LND$-rigid. As 
a consequence, if we assume in addition that $A$ is finitely 
generated over $k$, then $A$ is cancellative.
\item[(2)]
If $A$ is not PI, then $A$ is universally cancellative.
\end{enumerate}
\end{corollary}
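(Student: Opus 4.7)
For part (1), my plan is to establish $\LND$-rigidity of $A$ directly, after which Theorem~\ref{xxthm3.6} handles the cancellativity conclusion in the finitely generated case. Suppose $\delta \in \LND(A)$ is nonzero. Since $A$ is a domain of finite GK-dimension it is Ore, and Lemma~\ref{xxlem3.4} yields a tower of inclusions $A \hookrightarrow E[x;\delta_0] \hookrightarrow Q(A)$, where $E$ is a division subring of $Q(A)$ and $\delta_0$ is a derivation of $E$. Because $A$ is PI, so is $Q(A)$, with $\GKdim Q(A) = \GKdim A = 2$; hence $E$ and $E[x;\delta_0]$ are both PI, and the identity $\GKdim E[x;\delta_0] = \GKdim E + 1$ forces $\GKdim E \leq 1$.

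The key step is to show that $E$ is in fact a field. As a PI division ring, $E$ is finite-dimensional over its center $Z(E)$, and $Z(E)$ has transcendence degree at most $1$ over $k$. If this transcendence degree is $0$, algebraic closedness of $k$ gives $Z(E) = k$ and so $E = k$. If it is $1$, I would write $Z(E)$ as the directed union of its finitely generated subfields; each such subfield is either $k$ or a function field of a curve over $k$, and hence $C_1$ by Tsen's theorem. Since $C_1$-ness passes to directed unions, $\mathrm{Br}(Z(E)) = 0$ and so $E = Z(E)$ is a field. With $E$ a field the argument finishes by dichotomy on $\delta_0$: if $\delta_0 = 0$ then $E[x]$ is commutative, contradicting the embedded noncommutative $A$; while if $\delta_0 \neq 0$, choose $t \in E$ with $c := \delta_0(t) \neq 0$ and set $y := c^{-1}x$, so that $[y,t] = 1$ and $E[x;\delta_0] \subseteq Q(A)$ contains a copy of the first Weyl algebra $A_1(k)$---contradicting the PI property.

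For part (2), by Proposition~\ref{xxpro1.3} it suffices to show that $Z(A) = k$. Suppose for contradiction that $z \in Z(A) \setminus k$; algebraic closedness of $k$ forces $z$ to be transcendental. Reducing to a finitely generated non-PI subalgebra $A' \subseteq A$ containing $z$ (in the finitely generated case one takes $A' = A$), I would localize at the central Ore set $S := k[z]\setminus\{0\}$ to form the affine $k(z)$-algebra $A'_S \supseteq A'$. Invariance of $\GKdim_k$ under this central localization, together with the standard base-change identity, gives
\[
\GKdim_{k(z)} A'_S \ = \ \GKdim_k A'_S - \GKdim_k k(z) \ = \ 2 - 1 \ = \ 1.
\]
The Small--Warfield theorem then forces $A'_S$---and hence its subring $A'$---to be PI, contradicting the choice of $A'$. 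Therefore $Z(A) = k$, and Proposition~\ref{xxpro1.3} yields universal cancellativity.

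The two places I expect to need the most care are (a) the commutativity of $E$ in part (1), particularly the transcendence-degree-$1$ subcase, where the Tsen/Brauer-group argument together with the directed-union reduction must be made precise; and (b) the GK-dimension bookkeeping in part (2), namely the invariance of $\GKdim_k$ under central Ore localization at the infinitely generated set $k[z]\setminus\{0\}$ and the base-change formula over the non-finitely-generated field $k(z)$. Both are reasonably standard but each deserves explicit verification.
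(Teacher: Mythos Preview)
Your argument for part (1) is correct and follows the same arc as the paper's proof: embed $A$ in $E[x;\delta_0]$ via Lemma~\ref{xxlem3.4}, use Tsen to force $E$ to be a field, and then eliminate $\delta_0\neq 0$ via the Weyl-algebra contradiction. The only cosmetic difference is that the paper bounds $\GKdim E$ by quoting \cite[Theorem~1.3]{Be} (infinite left/right $E$-dimension of $Q(A)$ forces $\GKdim E=1$), whereas you go through $\GKdim E + 1 \le \GKdim E[x;\delta_0] \le \GKdim Q(A)=2$; both are fine. Your flagged concern (a) about the Tsen step is handled just as the paper does.

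For part (2), the paper's proof is a one-line citation of \cite[Corollary~2]{SZ}, which directly gives $C(A)=k$ for a non-PI domain of GK-dimension two (no finite-generation hypothesis), and then applies Proposition~\ref{xxpro1.3}. Your localization/Small--Warfield argument is essentially a reconstruction of that result, and it works cleanly when $A$ is finitely generated. The genuine gap is the sentence ``Reducing to a finitely generated non-PI subalgebra $A'\subseteq A$ containing $z$'': it is not automatic that a non-PI algebra contains a finitely generated non-PI subalgebra, and you give no mechanism for producing one. Without this reduction your contradiction never fires in the non-finitely-generated case, since every finitely generated $A'$ you test could already be PI (with unbounded PI degree as $A'$ grows). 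This, rather than the GK-dimension bookkeeping you flagged in (b), is the step that needs work; the cleanest fix is simply to invoke \cite{SZ} as the paper does, or else to argue uniformly that $A_S$ itself (not just its finitely generated subalgebras) is PI.
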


\begin{proof} 
(1) If $A$ is not $\LND$-rigid, then there is a nonzero locally 
nilpotent derivation $\delta$ of $A$. So the kernel of $\delta$ 
is not equal to $A$.  As in 
Lemma \ref{xxlem3.4} let $E$ denote the set of elements $a\in Q(A)$ 
such that $\delta(a)=0$.  By Lemma \ref{xxlem3.4}, $A$ embeds in 
$W:=E[x;\delta_0]$ for some derivation $\delta_0$ of $E$.   Since
$W$ is a subalgebra of $Q(A)$, $Q(A)$ is infinite-dimensional as a 
left and right $E$-vector space.  Hence $E$ has GK-dimension one 
\cite[Theorem 1.3]{Be}. 
Since $E$ is a subalgebra of $Q(A)$, it is PI and so by Tsen's theorem, 
$E$ is commutative, whence $E$ is a 
field.  By Lemma \ref{xxlem3.4}(3), $W:=E[x;\delta_0]$ is 
a subring of $Q(A)$. Since $A$ is  PI, $Q(A)$ is also PI and hence $E[x;\delta_0]$
is PI. We observe that this gives $\delta_0=0$.  To see this, suppose 
that there is some $\alpha \in E$ such that $\beta:=\delta_0(\alpha)\neq 0$.
Then $[\beta^{-1}x,\alpha]=\beta^{-1}\delta_0(\alpha)=1$ and so 
in this case we would have that $E[x;\delta_0]$ contains a copy of 
the Weyl algebra over $\mathbb{Q}$, which contradicts the fact 
that $E[x;\delta_0]$ is PI.  
Thus $\delta_0=0$ and $W$ is commutative. So 
$A$ is commutative, yielding a contradiction. The result follows.

The consequence follows from the main assertion and
Theorem \ref{xxthm3.6}.

(2) If $A$ is not PI and has GK-dimension two, then, by
\cite[Corollary 2]{SZ}, $C(A)=k$.
The assertion now follows from Proposition \ref{xxpro1.3}.
\end{proof}

\begin{definition}
\label{xxdef3.8}
An Ore domain $A$ is called {\it birationally affine-ruled} 
if $Q(A)=D(x)$ for some division algebra $D$ and 
{\it birationally Weyl-ruled} 
if $Q(A)=Q(E[x;\delta_0])$ for some division algebra $E$ 
and some nonzero derivation 
$\delta_0$ of $E$.
\end{definition}

By Lemma \ref{xxlem3.4}, if $A$ has a nonzero locally 
nilpotent derivation, then $A$ is either birationally affine-ruled
or birationally Weyl-ruled.

\begin{corollary}
\label{xxcor3.9}
Let $A$ be a finitely generated PI domain containing ${\mathbb Z}$ with 
finite GK-dimension. If $A$ is not birationally affine-ruled, then 
$A$ is $\LND$-rigid and cancellative.
\end{corollary}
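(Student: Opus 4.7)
My plan is to derive the $\LND$-rigidity of $A$ by contradiction, then upgrade to cancellation via Theorem \ref{xxthm3.6}. Suppose, toward a contradiction, that $A$ admits a nonzero locally nilpotent derivation $\delta$. Because $A$ is a finitely generated domain of finite GK-dimension, it is an Ore domain, so Lemma \ref{xxlem3.4} applies: $A$ embeds in $E[x;\delta_0] \subseteq Q(A)$ with $Q(A)=Q(E[x;\delta_0])$, where $E$ is the kernel in $Q(A)$ of the unique extension of $\delta$, and $\delta_0$ is a $k$-derivation of the division ring $E$. Since $A$ is PI, $Q(A)$ is PI, hence so is the division subring $E$, and by Posner's theorem $E$ is finite-dimensional over its center $Z := Z(E)$. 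Since any $k$-derivation of $E$ sends the center to the center, $\delta_0$ restricts to a $k$-derivation of $Z$.

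The crux is a dichotomy on $\delta_0|_Z$. First, if $\delta_0|_Z \neq 0$, I would pick $\alpha \in Z$ with $\beta := \delta_0(\alpha) \neq 0 \in Z$ and compute in $Q(A)$, using the centrality of $\alpha$ and $\beta^{-1}$ in $E$, that $[\beta^{-1}x,\alpha] = \beta^{-1}(x\alpha - \alpha x) = \beta^{-1}\delta_0(\alpha) = 1$. The $\mathbb{Q}$-subalgebra generated by $\alpha$ and $\beta^{-1}x$ is then a nontrivial homomorphic image of the first Weyl algebra $A_1(\mathbb{Q})$; by simplicity of $A_1(\mathbb{Q})$ it must be a copy of $A_1(\mathbb{Q})$, contradicting the PI property of $Q(A)$. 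Second, if $\delta_0|_Z = 0$, then $\delta_0$ is $Z$-linear, and since $E$ is central simple over $Z$, the classical theorem that every $Z$-linear derivation of a central simple algebra is inner yields $\delta_0 = [c,-]$ for some $c \in E$. Setting $y := x-c$, the identity $[y,\alpha] = \delta_0(\alpha) - [c,\alpha] = 0$ for all $\alpha \in E$ shows that $E[x;\delta_0]$ coincides with the ordinary polynomial ring $E[y]$ in which $y$ commutes with $E$. Consequently $Q(A) = Q(E[y]) = D(y)$ with $D = E$, so $A$ is birationally affine-ruled, contradicting the hypothesis.

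Both branches yield contradictions, so $\delta = 0$ and $A$ is $\LND$-rigid; cancellation follows immediately from Theorem \ref{xxthm3.6}. The principal obstacle, compared with the GK-dimension-two argument of Corollary \ref{xxcor3.7}(1), is the loss of Tsen's theorem: here one cannot conclude that $E$ is commutative, so after setting up the Ore-extension picture one must invoke the Posner-Rowen structure of PI division rings together with the fact that $Z$-linear derivations of a central simple algebra are inner to kill the noncentral behavior of $\delta_0$. Once that dichotomy is in place, the two hypotheses (PI and not birationally affine-ruled) are exactly what is needed to close off the two cases.
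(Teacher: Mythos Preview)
Your argument is correct and reaches the same endpoint as the paper---showing that $\delta_0$ is inner, whence $E[x;\delta_0]=E[x']$ is an ordinary polynomial ring and $A$ is birationally affine-ruled---but the route is genuinely different. The paper does not invoke Posner's theorem or the Skolem--Noether fact about derivations of central simple algebras. Instead it observes directly that, because $E[x;\delta_0]$ is a prime PI domain, its center cannot be contained in the division ring $E$ (otherwise the center would be a field and Posner's theorem would force $E[x;\delta_0]$ to be Artinian). Choosing a central element $f=x^n+a_{n-1}x^{n-1}+\cdots$ of positive $x$-degree and expanding $[e,f]=0$, the paper reads off $n\delta_0(e)=[e,a_{n-1}]$, so $\delta_0$ is inner with $b=\tfrac{1}{n}a_{n-1}$ and $x'=x+b$.

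Your approach trades that explicit computation for structure theory: Posner gives $[E:Z]<\infty$, and then the dichotomy on $\delta_0|_Z$ cleanly separates the Weyl-algebra obstruction (already used in Corollary~\ref{xxcor3.7}(1)) from the Skolem--Noether step. The paper's proof is more elementary and self-contained; yours is more conceptual and makes transparent why PI is exactly the hypothesis needed---it simultaneously kills the Weyl case and forces $E$ to be central simple so that the remaining derivation must be inner.
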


\begin{proof} By Theorem \ref{xxthm3.6}, it suffices to show that
$A$ is $\LND$-rigid.

If $A$ is not $\LND$-rigid, then $A$ is endowed with a 
nonzero locally nilpotent derivation. By Lemma \ref{xxlem3.4}, 
$A\subset E[x;\delta_0]\subset Q(A)$, where $E$ is a division
subring of $Q(A)$. Since $A$ is PI, so are $Q(A)$ and $E[x;\delta_0]$. 
Then the center of $E[x;\delta_0]$ is not a subring of $E$. 
Let $f=a_nx^n+a_{n-1}x^{n-1}+\cdots +a_0$ be a central element 
in $E[x;\delta_0]$ for some $n\geq 1$ and $a_0\neq 0$.
Since $f$ is central,
$$0=[x,f]=\sum_{i=0}^n [x,a_i] x^i=\sum_{i=0}^n \delta_0(a_i) x^i,$$
implying that $\delta_0(a_i)=0$ for all $i$. For every $e\in E$,
$$0=[e,f]=[e,a_{n}] x^n+{\text{lower~degree~terms}},$$
which implies that $[e,a_n]=0$. Hence, $a_n$ is in the center
of $E[x;\delta_0]$. By replacing $f$ by $a_{n}^{-1} f$, we may 
assume that $a_n=1$. A straightforward calculation gives
$$\begin{aligned}
0&=[e,f]=ex^n-(ex^n+n\delta_0(e) x^{n-1}+{\text{lower~degree~terms}})\\
&\qquad\qquad\qquad +[e,a_{n-1}]x^{n-1}+{\text{lower~degree~terms}}\\
&=(-n\delta_0(e)+[e,a_{n-1}])x^{n-1}+{\text{lower~degree~terms}}.
\end{aligned}
$$
Hence $-n\delta_0(e)+[e,a_{n-1}]=0$ or $\delta_0(e)=[e, b]$
where $b=\frac{1}{n}a_{n-1}$. Then $E[x,\delta_0]=E[x']$ 
where $x'=x+b$.
So $A$ is birationally affine-ruled, a contradiction.
\end{proof}


\section{Discriminant}
\label{xxsec4}

We recall the definition of the discriminant in the noncommutative
setting and everything in this section is taken from \cite{CPWZ1, CPWZ2}.
Let $R$ be a commutative algebra and let $B$ and $F$ be algebras 
both of which contain $R$ as a subalgebra. In our applications, $F$ 
will either be $R$ or a ring of fractions of $R$. An $R$-linear 
map $\tr: B\to F$ is called a \emph{trace map} if $\tr(ab)=\tr(ba)$ 
for all $a,b\in B$.

If $B$ is the $w\times w$-matrix algebra $M_w(R)$ over $R$, the 
internal trace $\tr_{\INT}: B\to R$ is defined
to be the usual matrix trace, namely, $\tr_{\INT}((r_{ij}))=
\sum_{i=1}^w r_{ii}$.
Let $B$ be an $R$-algebra, and suppose that $B_F:=B\otimes_R F$ is 
finitely generated and free over $F$, where $F$ is a localization of
$R$. Then left
multiplication defines a natural embedding of $R$-algebras $lm:B\to
B_F\to \End_F(B_F)\cong M_w(F)$, where $w$ is the rank $\rk(B_F/F)$. 
Then we have a \emph{regular trace},
by composing: 
$$\tr_{\textup{reg}}: B\xrightarrow{lm} M_w(F)\xrightarrow{\tr_{\INT}} F.$$
Usually we use the regular trace even if other trace functions 
exist. The following definition is well-known, see Reiner's book \cite{Re}.
For an algebra $A$, let $A^\times$ denote the set of invertible elements in $A$.
If $f,g\in A$ and $f=cg$ for some $c\in A^\times$, then we write
$f=_{A^\times} g$.

\begin{definition} \cite[Definition 1.3]{CPWZ1}
\label{xxdef4.1} Let $\tr: B\to F$ be a trace map and $v$ be a fixed
integer. Let $Z:=\{z_i\}_{i=1}^v$ be a subset of $B$.
\begin{enumerate}
\item[(1)]
The \emph{discriminant} of $Z$ is defined to be
\[
d_v(Z:\tr)=\det(\tr(z_iz_j))_{v\times v}\in F.
\]
\item[(2)] 
\cite[Section 10, p.~126]{Re}.
The \emph{$v$-discriminant ideal} (or \emph{$v$-discriminant
$R$-module}) $D_v(B,\tr)$ is the $R$-submodule of $F$ generated by
the set of elements $d_v(Z:\tr)$ for all $Z=\{z_i\}_{i=1}^v\subset
B$.
\item[(3)]
Suppose $B$ is an $R$-algebra which is finitely generated free over
$R$ of rank $w$. If $Z$ is an $R$-basis of $B$, the \emph{discriminant} of $B$ 
over $R$ is defined to be
\[
d(B/R)=_{R^\times} d_w(Z:\tr).
\]
Note that $d(B/R)$ is well-defined up to a scalar in
$R^\times$ \cite[p.66, Exer 4.13]{Re}.
\end{enumerate}
\end{definition}

We refer to the books \cite{AW, Re, St} for the classical definition
of the discriminant and its connection with the above definition. 

To cover a larger class of algebras that are not free over their 
centers, we need a modified version of the discriminant. Let $B$ be a domain
and let ${\mathcal D}:=\{d_i\}_{i\in I}$ be a set of elements in $B$. 
A normal element $x\in B$ is called a \emph{common divisor} if 
$d_i=d_i' x$ for some $d'_i$ for all $i\in I$. We say a normal element 
$x\in B$ is the \emph{greatest common divisor} or \emph{gcd} of 
${\mathcal D}$, denoted by $\gcd {\mathcal D}$, if 
\begin{enumerate}
\item
$x$ is a common divisor of ${\mathcal D}$, and
\item
for every common divisor $y$ of ${\mathcal D}$, $x=cy$ for some 
$c\in B$.
\end{enumerate}

It follows from part (b) that the gcd of any subset ${\mathcal D}
\subseteq B$ (if it exists) is unique up to a scalar in $B^\times$.

\begin{definition}
\cite[Definition 1.2]{CPWZ2} 
\label{xxdef4.2} Let $\tr: B\to R$ be a trace map and let $v$ be a 
positive integer. Let $Z$ {\rm{(}}respectively, $Z'${\rm{)}} denote 
a $v$-element subset $\{z_i\}_{i=1}^v$ 
{\rm{(}}respectively, $\{z'_i\}_{i=1}^v${\rm{)}} of $B$.
\begin{enumerate}
\item[(1)]
The \emph{discriminant} of the pair $(Z,Z')$ is defined to be
\[
d_v(Z,Z':\tr)=\det(\tr(z_iz'_j))_{v\times v}\in R.
\]
\item[(2)]
The \emph{modified $v$-discriminant ideal}  $MD_v(B,\tr)$ is 
the ideal of $R$ generated by
the set of elements $d_v(Z,Z':\tr)$ for all $Z, Z' \subset B$.
\item[(3)]
The \emph{$v$-discriminant} $d_v(B/R)$ is defined to be the gcd 
in $B$ (possibly not in $R$) 
of elements $d_v(Z,Z':\tr)$ for all $Z, Z' \subset B$. Equivalently,
the $v$-discriminant $d_v(B/R)$ is the gcd 
in $B$ of all elements in $MD_v(B,\tr)$.
\end{enumerate}
\end{definition}

In Definition \ref{xxdef4.2}(3), we are taking the gcd in $B$, not in $R$.
If $d_v(B/R)$ exists, then the ideal $(d_v(B/R))$ of $B$ generated 
by $d_v(B/R)$ is the smallest
principal ideal of $B$ that contains $MD_v(B:\tr)B$. 
If $B$ is an $R$-algebra which is finitely generated 
free over $R$ and if $w=\rk(B/R)$, then  $MD_w(B:\tr)$  equals 
$D_w(B:\tr)$, both of which  are generated by
a single element $d(B/R)$. In this case it is also true that $d(B/R)
=_{B^\times}d_w(B/R)$. 
If $v>\rk(B/R)$, then $d_v(B/R)=0$ \cite[Lemma 1.9(2)]{CPWZ2}. 

Some explicit examples of discriminants are given in 
\cite{CPWZ1,CPWZ2, CYZ1, CYZ2}.

The next lemma is straightforward by using commutative 
algebra argument. 

\begin{lemma}
\label{xxlem4.3} 
Let $A$ and $B$ be PI domains with centers $C_A$ 
and $C_B$ respectively. Let $\tr: A\to Q(C_A))$ be a 
$C_A$-linear trace function. Let $R$ be a $k$-flat commutative 
algebra such that $A\otimes R$ is a domain.    
\begin{enumerate}
\item[(1)]
If $\tr$ is the regular trace, then 
$\tr\otimes R: A\otimes R\to Q(C_A) \otimes R$ is 
the regular trace. 
\item[(2)]
Suppose that $R$ is $k$-free.
Then the image of $\tr$ is in $C_A$ if and only if the image of
$\tr\otimes R$ is in $C_A\otimes R$.  
\item[(3)]
Suppose that $\phi: A\to B$ is an algebra isomorphism. Then 
$\phi\circ \tr\circ \phi^{-1}: B\to Q(C_B)$ is the regular trace
if and only if $\tr$ is the regular.
\end{enumerate}
\end{lemma}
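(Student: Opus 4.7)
My plan is to handle the three parts of Lemma~\ref{xxlem4.3} in turn, exploiting the fact that the regular trace is constructed via left-multiplication and internal matrix trace, and that both operations commute with base change along a flat extension.

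For part (1), I would first recall the construction of the regular trace for $A$: tensor up to the fraction field $Q:=Q(C_A)$, choose a basis of $A_Q:=A\otimes_{C_A} Q$ over $Q$ of rank $w$, and define $\tr_{\mathrm{reg}}$ as $\tr_{\mathrm{INT}}\circ lm$, where $lm:A\hookrightarrow \End_Q(A_Q)\cong M_w(Q)$. I would then show that when one tensors with the $k$-flat commutative $R$, the resulting map $lm\otimes R$ factors through $\End_{Q\otimes R}(A_Q\otimes R)\cong M_w(Q\otimes R)$, and that taking the internal trace in $M_w(Q\otimes R)$ corresponds precisely to $\tr_{\mathrm{INT}}\otimes R$. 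Since $A\otimes R$ is a domain, one checks (using flatness of $R$ and the universal property of $Q$) that $Q(C_A)\otimes R$ sits inside the field of fractions of the center of $A\otimes R$, so the displayed map $\tr\otimes R:A\otimes R\to Q(C_A)\otimes R$ is exactly the regular trace of $A\otimes R$ taking values in this intermediate ring.

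For part (2), the forward direction is immediate: if $\tr(A)\subseteq C_A$, then $(\tr\otimes R)(A\otimes R)\subseteq C_A\otimes R$ by $R$-linearity. For the converse, the hypothesis that $R$ is $k$-free lets me fix a $k$-basis $\{r_\alpha\}$ of $R$ with $1$ among the basis elements. Then $Q(C_A)\otimes R=\bigoplus_\alpha Q(C_A)r_\alpha$ as a free $Q(C_A)$-module, and $C_A\otimes R=\bigoplus_\alpha C_A r_\alpha$ is the corresponding $C_A$-submodule. For any $a\in A$, $(\tr\otimes R)(a\otimes 1)=\tr(a)\otimes 1=\tr(a)\cdot 1$, and the coordinate on the basis element $1$ is $\tr(a)$; this lies in $C_A$ iff $(\tr\otimes R)(a\otimes 1)\in C_A\otimes R$. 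Hence $\tr(A)\subseteq C_A$ iff $(\tr\otimes R)(A\otimes R)\subseteq C_A\otimes R$.

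For part (3), I would use that an algebra isomorphism $\phi:A\to B$ restricts to an isomorphism $C_A\to C_B$ and extends uniquely to an isomorphism $Q(C_A)\to Q(C_B)$; moreover $\phi$ induces an isomorphism $A\otimes_{C_A} Q(C_A)\to B\otimes_{C_B} Q(C_B)$ compatible with left-multiplication and base change. Thus the regular-trace construction is intrinsic to the algebra up to isomorphism, and $\phi\circ\tr_{\mathrm{reg}}^A\circ\phi^{-1}=\tr_{\mathrm{reg}}^B$. Conversely, if $\phi\circ\tr\circ\phi^{-1}=\tr_{\mathrm{reg}}^B$, composing with $\phi^{-1}$ on the left and $\phi$ on the right gives $\tr=\tr_{\mathrm{reg}}^A$. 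This yields the stated equivalence.

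The main subtlety I anticipate is in part (1), namely identifying $Q(C_A)\otimes R$ correctly with (a localization of) the center of $A\otimes R$ and checking that the rank and the basis used to define $\tr_{\mathrm{reg}}$ are preserved after tensoring with $R$. Once this bookkeeping about flatness of $R$ and the PI hypothesis on $A$ is in place, the rest of the arguments are routine.
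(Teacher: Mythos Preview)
Your proposal is correct and follows the natural approach; in fact, the paper does not supply a proof at all beyond the remark that the lemma ``is straightforward by using commutative algebra argument.'' Your write-up is essentially the expected unpacking of that remark: base-change compatibility of the left-multiplication map and internal trace for part~(1), a coordinate argument using a $k$-basis of $R$ for part~(2), and naturality of the regular-trace construction under algebra isomorphisms for part~(3).
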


One of our key lemmas is the following, which suggests that the
discriminant controls the group of automorphisms.

\begin{lemma}
\label{xxlem4.4} Let $\phi: A\to B$ be an isomorphism of algebras.
Let $C_A$ and $C_B$ be the center of $A$ and $B$ respectively.
Suppose that $tr_A$ {\rm{(}}respectively, $tr_B${\rm{)}}
is the regular trace $A\to C_A$ {\rm{(}}resp. $B\to C_B${\rm{)}} and 
that the image of $tr_A$ is in $C_A$ {\rm{(}}resp. the image of 
$tr_B$ is in $C_B${\rm{)}}. Let $w$ be a positive integer. 
Then the following hold:
\begin{enumerate} 
\item[(1)]
$\phi$ maps the discriminant ideal $D_w(A,\tr_A)$ to $D_w(B,\tr_B)$;
\item[(2)]
if $A$ is a finitely generated free module over $C_A$, then 
$\phi(d(A/C_A))=_{C_B^\times} d(B/C_B)$;
\item[(3)]
$\phi$ maps the modified discriminant ideal $MD_w(A,\tr_A)$ to 
$MD_w(B,\tr_B)$;
\item[(4)]
$\phi$ maps the $w$-discriminant $d_w(A/C_A)$ to 
$d_w(B/C_B)$.
\end{enumerate}
\end{lemma}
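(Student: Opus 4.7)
The plan is to reduce everything to the observation that a ring isomorphism $\phi:A\to B$ automatically restricts to an isomorphism of centers $\phi|_{C_A}:C_A\to C_B$, and that the regular trace is intrinsic to the ring structure, so it is transported correctly under $\phi$.

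First I would establish the key identity $tr_B\circ\phi=\phi\circ tr_A$. Since $\phi$ sends centers to centers and extends uniquely to an isomorphism $Q(C_A)\to Q(C_B)$, the composite $\phi\circ tr_A\circ \phi^{-1}:B\to Q(C_B)$ is a $C_B$-linear trace function, and by Lemma \ref{xxlem4.3}(3) it is in fact the regular trace, hence equals $tr_B$ by uniqueness. This also justifies that the image of $tr_B$ lies in $C_B$ whenever the image of $tr_A$ lies in $C_A$, matching the hypothesis.

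Next, for parts (1) and (3), I would fix subsets $Z=\{z_i\}_{i=1}^v$ and $Z'=\{z'_j\}_{j=1}^v$ of $A$ (with $v=w$ for (1); general $v$ for (3)) and compute
$$\phi\bigl(d_v(Z,Z':tr_A)\bigr)=\phi\bigl(\det(tr_A(z_iz'_j))\bigr)=\det\bigl(\phi(tr_A(z_iz'_j))\bigr)=\det\bigl(tr_B(\phi(z_i)\phi(z'_j))\bigr)=d_v(\phi(Z),\phi(Z'):tr_B),$$
using that $\phi$ is a ring homomorphism whose restriction to $C_A$ is a ring isomorphism onto $C_B$, so determinants over $C_A$ are carried to determinants over $C_B$. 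Since $\phi$ is a bijection, as $Z,Z'$ range over all $v$-element subsets of $A$, their images $\phi(Z),\phi(Z')$ range over all $v$-element subsets of $B$, so the generating sets of $D_w(A,tr_A)$ and $MD_v(A,tr_A)$ are bijectively identified (as $C_A \cong C_B$-modules/ideals) with those of $D_w(B,tr_B)$ and $MD_v(B,tr_B)$.

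For (2), if $\{a_1,\ldots,a_w\}$ is a $C_A$-basis of $A$, then $\{\phi(a_1),\ldots,\phi(a_w)\}$ is a $C_B$-basis of $B$ since $\phi$ identifies the $C_A$-module structure on $A$ with the $C_B$-module structure on $B$. Applying the computation above to $Z=Z'=\{a_i\}$ gives $\phi(d(A/C_A))=_{C_B^\times} d(B/C_B)$. For (4), the $w$-discriminant $d_w(A/C_A)$ is characterized up to $A^\times$ as the gcd in $A$ of the family $\{d_w(Z,Z':tr_A)\}$; since $\phi$ is a ring isomorphism, it preserves the notion of normal element and of divisibility, and hence preserves gcds, so combined with (3) this yields $\phi(d_w(A/C_A))=_{B^\times} d_w(B/C_B)$. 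The main bookkeeping concern (rather than a genuine obstacle) is tracking whether equalities should be interpreted up to units of $C_B^\times$ or of $B^\times$; once the identity $tr_B\circ\phi=\phi\circ tr_A$ is in hand, every other step is formal.
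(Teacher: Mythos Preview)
Your proposal is correct and follows exactly the same approach as the paper: the paper's entire proof is ``By Lemma~\ref{xxlem4.3}(3), $\phi(tr_A(x))=tr_B(\phi(x))$ for all $x\in A$. The rest follows from this observation.'' You have simply spelled out in detail the formal verifications that the paper leaves implicit.
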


\begin{proof} By Lemma \ref{xxlem4.3}(3),
$\phi (tr_A(x))=tr_B(\phi(x))$ for all $x\in A$. The rest follows from 
this observation.
\end{proof}

The concept of a dominating element was introduced in
\cite{CPWZ1, CPWZ2} to handle the noncommutative {\bf AP}.
We now recall this notion. 

Let $R$ be an algebra over $k$. We say $R$ is \emph{connected graded} if
$R=k\oplus R_1\oplus R_2\oplus \cdots$ and $R$ is \emph{locally finite}
if each $R_i$ is finitely generated over $k$. We now consider 
filtered rings $A$. Let $Y$ be a finitely generated free $k$-submodule of 
$A$ such that $k1_A \cap Y=\{0\}$. Consider the \emph{standard filtration} 
defined by $F_n A:= (k1_A +Y)^n$ for all $n\geq 0$. Assume that this 
filtration is exhaustive and that the associated graded
ring $\gr A$ is connected graded (or the map $k\to A$ is 
injective). For each element $f\in
F_n A\setminus F_{n-1} A$, the associated element in $\gr A$ is
defined to be $\gr f=f+F_{n-1} A\in (\gr_F A)_n$. The degree of an
element $f\in A$, denoted by $\deg f$, is defined to be the degree 
of $\gr f$.

If $\gr A$ is a domain, then, for any elements $f_1, f_2 \in A$, 
\begin{equation}\notag
\deg (f_1 f_2) = \deg f_1 + \deg f_2.
\end{equation}

If $\gr A$ is a connected graded domain, it is easy to see that
$A^\times =k^\times$.  As usual, we assume that
$k\subseteq A$. In this case $\gr A$ is connected graded. 
If $R$ is a subalgebra of $A$, then
$R^\times \subseteq A^\times =k^\times$.

\begin{definition}\cite[Definition 2.1(2)]{CPWZ1}
\label{xxdef4.5} Retain the above notation.
Suppose that $Y=\bigoplus_{i=1}^n kx_i$ generates
$A$ as an algebra.
Assume that $\gr A$ is a connected graded domain.
An element $f\in A$ is called \emph{dominating} if, for every 
testing ${\mathbb N}$-filtered PI algebra $T$ with $\gr T$ being a 
connected graded domain, and for every testing subset 
$\{y_1,\ldots,y_{n}\}\subset T$ that is linearly independent in
the quotient $k$-module $T/F_0 T$, there is a presentation
of $f$ of the form $f(x_1,\ldots,x_{n})$ in the free algebra $k\langle 
x_1,\ldots,x_{n}\rangle$, such that the following hold: either  
$f(y_1,\ldots,y_{n})=0$, or 
\begin{enumerate}
\item
$\deg f(y_1,\ldots,y_{n})\geq \deg f$, and
\item
$\deg f(y_1,\ldots,y_{n})> \deg f$ if, further, 
$\deg y_{i_0}>1$ for some $i_0$.
\end{enumerate}
\end{definition}

Suppose now $A$ is generated by elements in $Y=\bigoplus_{i=1}^n kx_i$
of degree 1. A monomial $x_1^{b_1}\cdots x_n^{b_n}$ is said to have
degree \emph{component-wise less than} (or, \emph{cwlt}, for short)
$x_1^{a_1}\cdots x_n^{a_n}$ if $b_i\leq a_i$ for all $i$ and
$b_{i_0}<a_{i_0}$ for some $i_0$.  We write $f=cx_1^{b_1}\cdots
x_n^{b_n}+\cwlt$ if $f-cx_1^{b_1}\cdots x_n^{b_n}$ is a linear
combination of monomials with degree component-wise less than
$x_1^{b_1}\cdots x_n^{b_n}$.  
If $f=x_1^{b_1}\cdots x_{n}^{b_{n}}+\cwlt$ for some 
$b_1,\cdots, b_{n}\geq 1$, then $f$ is dominating (see the proof of 
\cite[Lemma 2.2]{CPWZ1}). 
In the next section we will
introduce a notion of effectiveness to deal with noncommutative
{\bf ZCP}.

The next result is a key lemma. Let $R$ be a commutative algebra. We say 
that $A\otimes R$ is $A$-closed if, for every $0\neq f\in A$ and 
$x,y\in A\otimes R$, the equation $xy=f$ implies that $x,y\in A$ up 
to units of $A\otimes R$. For example, if $R$ is connected graded 
and $A\otimes R$ is a domain, then $A\otimes R$ is $A$-closed. 

\begin{lemma}\cite[Lemma 1.12]{CPWZ2}
\label{xxlem4.6}
Let $\tr: A\to C$ be a $C$-linear trace function where $C$ is a central 
subalgebra of $A$. 
Let $R$ be a $k$-flat commutative algebra such that $A\otimes R$ is a 
domain and $v$ be a positive integer. 
\begin{enumerate}
\item[(1)]
$MD_v(A\otimes R:\tr\otimes R)=MD_v(A:\tr)\otimes R$. 
\item[(2)]
Suppose $A\otimes R$ is $A$-closed.
If $d_v(A/C)$ exists, then $d_v(A\otimes R/C\otimes R)$ exists and 
is equal to $d_v(A/C)$. 
\end{enumerate}
\end{lemma}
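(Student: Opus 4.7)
Set $A' := A\otimes R$, $C' := C\otimes R$, and $\tr' := \tr\otimes R$. The idea is to establish (1) by direct multilinear bookkeeping and then to use (1), together with the $A$-closedness hypothesis, to pin down the $\gcd$ in (2).

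For (1), the containment $MD_v(A:\tr)\otimes R \subseteq MD_v(A':\tr')$ is immediate, since any generator $d_v(Z,Z':\tr)$ with $Z,Z'\subset A$ coincides with the generator $d_v(Z\otimes 1,\,Z'\otimes 1:\tr')$. For the reverse, take arbitrary $v$-subsets $Z=\{z_i\}$, $Z'=\{z_j'\}$ of $A'$ and expand $z_i=\sum_\alpha a_{i,\alpha}\otimes r_{i,\alpha}$, $z_j'=\sum_\beta a'_{j,\beta}\otimes r'_{j,\beta}$. Because $\tr'$ is $R$-linear and $R$ is central in $A'$, I would split $\det(\tr'(z_i z_j'))$ first along rows (using multilinearity to choose $\alpha_i$ in row $i$) and then along columns (choosing $\beta_j$ in column $j$), pulling $1\otimes r_{i,\alpha_i}$ out of row $i$ and $1\otimes r'_{j,\beta_j}$ out of column $j$. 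The outcome is
\[
d_v(Z,Z':\tr')=\sum_{(\alpha_i),(\beta_j)}\bigl(1\otimes \textstyle\prod_i r_{i,\alpha_i}\prod_j r'_{j,\beta_j}\bigr)\bigl(d_v(\{a_{i,\alpha_i}\},\{a'_{j,\beta_j}\}:\tr)\otimes 1\bigr),
\]
which visibly lies in $MD_v(A:\tr)\otimes R$.

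For (2), part (1) identifies $MD_v(A':\tr')$ with $MD_v(A:\tr)\otimes R$, and I claim $d_v(A/C)$ is a $\gcd$ of this set in $A'$. It is a common divisor: if $\sigma\in\Aut(A)$ witnesses normality of $d_v(A/C)$ in $A$, then $\sigma\otimes\mathrm{id}_R$ witnesses normality in $A'$, and any element of $MD_v(A:\tr)\otimes R$ is an $R$-linear combination of elements already divisible by $d_v(A/C)$ in $A$. For maximality, suppose $w\in A'$ is a normal common divisor. Pick a nonzero $d\in MD_v(A:\tr)\subset A$ and write $d=wy$ in $A'$. By $A$-closedness, $w=u\tilde w$ and $y=\tilde y v$ for some units $u,v\in(A')^\times$ and $\tilde w,\tilde y\in A$, so the principal two-sided ideal $(w)$ in $A'$ equals $(\tilde w)$, and I may replace $w$ by $\tilde w\in A$. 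Normality of $\tilde w$ in $A'$ combined with $k$-flatness of $R$ yields $\tilde wA=A\tilde w$ inside $A$, and repeating the $A$-closed reduction with every other generator $d'\in MD_v(A:\tr)$ shows that $\tilde w$ divides $d'$ inside $A$ as well. Hence $\tilde w\mid d_v(A/C)$ in $A$ by the hypothesis that the latter is the $\gcd$ in $A$, and this divisibility persists in $A'$.

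\textbf{Main obstacle.} The multilinear expansion in (1) is mechanical. The genuine subtlety is the descent step in (2): after $A$-closedness has absorbed units of $A'$ into $w$, one must certify that the resulting element $\tilde w\in A$ (i) remains normal in $A$ and (ii) divides each element of $MD_v(A:\tr)$ in $A$, not merely in $A'$. Both steps rely on $k$-flatness of $R$ to descend identities of principal two-sided ideals from $A'$ down to $A$, and care is needed so that units in $A'$ absorbed during the reduction do not alter the relevant principal ideals being compared.
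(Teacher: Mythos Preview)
The paper does not give its own proof of this lemma; it is quoted verbatim from \cite[Lemma~1.12]{CPWZ2} and used as a black box. So there is no in-paper argument to compare against, and your write-up is essentially the standard argument one would expect.

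Your treatment of part~(1) is correct: the multilinear expansion of $\det(\tr'(z_iz_j'))$ in rows and then columns, pulling the $R$-coefficients out via $R$-linearity of $\tr'$, is exactly the mechanism, and both inclusions follow as you say.

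For part~(2), your strategy is the right one and it goes through cleanly in the case the paper actually uses (namely $R$ connected graded and $k$-free, so that units of $A\otimes R$ lie in $A$ and $A$ is a $k$-direct summand of $A\otimes R$). Two points in the general $k$-flat setting deserve more care than you give them, however. First, the descent of normality: from $\tilde w(A\otimes R)=(A\otimes R)\tilde w$ you want $\tilde wA=A\tilde w$. Flatness lets you identify $(\tilde wA/(\tilde wA\cap A\tilde w))\otimes R$ with the corresponding quotient in $A\otimes R$, which vanishes; but concluding that $\tilde wA/(\tilde wA\cap A\tilde w)=0$ requires \emph{faithful} flatness, not merely flatness. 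Second, when you apply $A$-closedness to $d'=(c'u)\tilde w$ for a second generator $d'$, the conclusion is only that both factors lie in $A$ \emph{up to units of $A\otimes R$}; the unit you must absorb into $\tilde w$ may depend on $d'$, so you have not yet shown that a \emph{single} element of $A$ divides every $d'$ inside $A$. In the connected graded situation both issues evaporate (units of $A\otimes R$ are already in $A$, and $A$ is the degree-zero summand), which is why the lemma is unproblematic where the paper invokes it. Your ``Main obstacle'' paragraph gestures at these issues but does not close them in the stated generality.
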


Now we are ready to state the main result of this section, 
which is basically \cite[Lemma 3.3(3)]{CPWZ1}. The proof
is given in the next section.

\begin{theorem}
\label{xxthm4.7} Let $A$ be a PI algebra. 
Suppose that the $w$-discriminant $d_w(A/C)$ is dominating 
for some $w$. Then the following hold.
\begin{enumerate}
\item[(1)]
$A$ is strongly $\LND^H$-rigid.
\item[(2)]
If $A$ has finite GK-dimension, then $A$ is strongly cancellative.
\end{enumerate}
\end{theorem}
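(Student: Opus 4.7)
The plan is to prove (1) and then deduce (2) from Theorem \ref{xxthm3.3}(1). Note that the hypothesis ``$d_w(A/C)$ is dominating'' implicitly presumes, via Definition \ref{xxdef4.5}, that $A$ has a finite generating set $\{x_1,\ldots,x_n\}$ with $\gr A$ a connected graded domain, so $A$ is automatically a finitely generated domain. Once strong $\LND^H$-rigidity is established and the finite GK-dimension hypothesis in (2) is added, Theorem \ref{xxthm3.3}(1) delivers strong cancellativity.

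For (1), fix $d\geq 1$ and put $B:=A[t_1,\ldots,t_d]$. Example \ref{xxex2.4} already gives $\ML^H(B)\subseteq A$, so given $\partial=(\partial_i)_{i\geq 0}\in \LND^H(B)$ the task reduces to showing $\partial_j(a)=0$ for every $a\in A$ and every $j\geq 1$. By Lemma \ref{xxlem2.2}, $G:=G_{\partial,s}$ is an algebra automorphism of $B[s]$. Combining Lemma \ref{xxlem4.4}(4) with Lemma \ref{xxlem4.6}(2), applied with $R=k[t_1,\ldots,t_d,s]$, yields
\[
G\bigl(d_w(A/C)\bigr) \;=\; u\cdot d_w(A/C)
\]
in $B[s]$ for some unit $u\in B[s]^\times$, where we view $d_w(A/C)\in A$ through the embedding $A\subseteq B[s]$. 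The hypotheses of Lemma \ref{xxlem4.6}(2) are met because $\gr A$ is a connected graded domain, so $A\otimes R$ is a domain and is $A$-closed.

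The crux is a careful choice of filtration on $B[s]$. I would assign each $x_i$ and each $t_j$ degree $1$ and assign $s$ a large degree $N$. Then $\gr B[s]=(\gr A)[t_1,\ldots,t_d,s]$ is a connected graded PI domain, so $B[s]^\times = k^\times$; in particular $u\in k^\times$ and $\deg G(d_w(A/C))=\deg d_w(A/C)$. On the other hand $G(x_i)=x_i+\sum_{j\geq 1}\partial_j(x_i)\,s^j$, and the elements $G(x_1),\ldots,G(x_n)$ are linearly independent modulo $F_0 B[s]=k$. Applying the dominating property of Definition \ref{xxdef4.5} to $f:=d_w(A/C)$ with testing subset $\{G(x_1),\ldots,G(x_n)\}$: since $f\neq 0$, clause (b) would force $\deg G(d_w(A/C))>\deg d_w(A/C)$ the moment some $G(x_{i_0})$ has degree $>1$. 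But taking $N$ large ensures that any nonzero $\partial_j(x_i)$ with $j\geq 1$ makes $\deg G(x_i)\geq N+1>1$, contradicting the degree equality above. Hence $\partial_j(x_i)=0$ for all $i$ and all $j\geq 1$, and the Leibniz rule in Definition \ref{xxdef2.1}(1) then propagates this to $\partial_j|_A=0$.

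The main obstacle I anticipate is the bookkeeping needed to check that this setup genuinely meets all the demands of Definition \ref{xxdef4.5}: namely that $B[s]$ with the proposed filtration is an $\mathbb{N}$-filtered PI algebra whose associated graded is a connected graded domain, that the unit $u$ actually lies in $k^\times$ so that it does not inflate degree, and that evaluating any lift of $d_w(A/C)$ in $k\langle x_1,\ldots,x_n\rangle$ at $G(x_1),\ldots,G(x_n)$ really recovers $G(d_w(A/C))$. All three rely on the connected graded domain structure of $\gr A$ supplied by the hypothesis, and modulo this bookkeeping the argument is a clean adaptation of the degree-comparison technique underlying the dominating notion.
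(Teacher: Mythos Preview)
Your proposal is correct and follows essentially the same approach as the paper. The paper's proof (appearing as Theorem~\ref{xxthm5.2}(2)) likewise applies the automorphism $G_{\partial,t}$ to $A[t_1,\ldots,t_d][t]$, puts degree~$1$ on the $x_i$ and $t_s$ and a large degree $\alpha$ on $t$, and uses Lemma~\ref{xxlem4.4}(4) together with Lemma~\ref{xxlem4.6}(2) to force $\deg G(f)=\deg f$, contradicting the dominating inequality unless every $\partial_i(x_j)$ vanishes; part~(2) then follows from part~(1) via Theorem~\ref{xxthm3.3}(1), exactly as you indicate. The only cosmetic difference is that the paper specifies the large degree as $\alpha>\deg\partial_i(x_j)$ for all $i,j$, which makes the ``no cancellation of leading terms'' step in your degree bound $\deg G(x_i)>1$ explicit.
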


The above theorem applies to many algebras including ones
listed below.

\begin{example}
\label{xxex4.8}
It is known that the following algebras have dominating discriminants
\cite{CPWZ1}.
\begin{enumerate}
\item[(1)]
$k_q[x_1,\ldots, x_{n}]$ where $n$ is an even 
number and $1\neq q$ is a root of unity.
\item[(2)]
$k\langle x, y\rangle/(x^2y-yx^2, y^2x+xy^2)$. 
\item[(3)] 
$k\langle x, y\rangle/(yx-q xy-1)$  where $1\neq q$ is a root of unity.
\item[(4)] 
finite tensor products of algebras of the form (1),(2),(3) above
\cite[Lemma 5.4]{CPWZ1}. 
\end{enumerate}
By Theorem \ref{xxthm4.7}(2), 
these algebras  are strongly cancellative.
\end{example}

\section{Effectiveness controls cancellation}
\label{xxsec5}

First we introduce the notion of effectiveness that plays an 
important role in the resolution of the noncommutative {\bf ZCP}.

\begin{definition}
\label{xxdef5.1} 
Let $A$ be a domain and suppose that $Y=\bigoplus_{i=1}^n kx_i$ 
generates $A$ as an algebra.
An element $f\in A$ is called \emph{effective} if the following
conditions hold.
\begin{enumerate}
\item[(1)]
There is an ${\mathbb N}$-filtration $\{F_i A\}_{\geq i}$ on $A$ 
such that the associated graded ring $\gr A$ is a domain 
(one possible filtration is the trivial filtration $F_0 A=A$).
With this filtration we define the degree of elements in $A$, denoted
by $\deg_A$.
\item[(2)]
For every testing ${\mathbb N}$-filtered PI algebra $T$ with 
$\gr T$ being an ${\mathbb N}$-graded domain and for every 
testing subset $\{y_1,\ldots,y_{n}\}\subset T$ satisfying 
\begin{enumerate}
\item[(a)] 
it is linearly independent in the quotient $k$-module $T/k1_T$, and 
\item[(b)] 
$\deg y_i\geq \deg x_i$ for all $i$ and $\deg y_{i_0}>\deg x_{i_0}$ 
for some $i_0$,
\end{enumerate}
there is a presentation of $f$ of the form 
$f(x_1,\ldots,x_{n})$ in the free algebra $k\langle x_1,\ldots,x_{n}
\rangle$, such that either $f(y_1,\ldots,y_n)$ is zero or 
$\deg_{T} f(y_1,\ldots,y_n)>\deg_A f$.
\end{enumerate}
\end{definition}

Note that the definition of a dominating element 
[Definition \ref{xxdef4.5}] is slightly different from the 
definiton of effectiveness. For example, we do not require 
Definition \ref{xxdef4.5}(a) in the definition of effectiveness.
On the other hand, one only needs to test those $T$ such that 
$\gr T$ is connected graded in the definition of a dominating element.
It is easy to check that elements $f:=x_1^{b_1}\cdots x_n^{b_n}+\cwlt$
is effective. We have already seen that there are many examples (those example 
given in \cite{CPWZ1,CPWZ2}) of noncommutative algebras whose 
discriminant is dominating and of the form 
$x_1^{b_1}\cdots x_n^{b_n}+\cwlt$, whence effective. Here is the main result 
in this section, which is a slight generalization of Theorem \ref{xxthm4.7}.

\begin{theorem}
\label{xxthm5.2} Let $A$ be a PI domain such that the $w$-discriminant 
over its center is effective {\rm{(}}or dominating in part {\rm{(2))}} 
for some $w$. 
\begin{enumerate}
\item[(1)]
Suppose $A$ has finite GK-dimension. Let $R$ be an affine 
$k$-free connected graded commutative domain such that $A\otimes R$ 
is a domain.  If $A\otimes R\cong B\otimes R$ for some algebra $B$, 
then $A\cong B$.  As a consequence, $A$ is strongly cancellative.
\item[(2)]
$A$ is strongly $\LND^H$-rigid. 
\end{enumerate}
\end{theorem}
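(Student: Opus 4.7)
Starting from the isomorphism $\phi:A\otimes R\to B\otimes R$, set $d_A:=d_w(A/C(A))$ and $d_B:=d_w(B/C(B))$. Since $R$ is connected graded and $A\otimes R$ is a domain, $A\otimes R$ is $A$-closed, so Lemma \ref{xxlem4.6}(2) identifies the $w$-discriminants of $A\otimes R$ and $B\otimes R$ with $d_A$ and $d_B$ respectively (up to units), and Lemma \ref{xxlem4.4}(4) then gives $\phi(d_A)=u\cdot d_B$ for some unit $u\in(B\otimes R)^\times$. I filter $B\otimes R$ by the $R$-grading, $F_n(B\otimes R):=B\otimes(R_0\oplus\cdots\oplus R_n)$; since $\gr(B\otimes R)=B\otimes R$ is a graded domain, degrees are additive, so $(B\otimes R)^\times\subseteq F_0(B\otimes R)=B$. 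Hence $u\in B$ and $\deg\phi(d_A)=0$. Picking generators $x_1,\dots,x_n$ of $A$ and setting $y_i:=\phi(x_i)$, assume for contradiction that some $y_{i_0}$ has positive filtration degree. With the trivial filtration on $A$ (so $\deg_A x_i=0$), the tuple $(y_i)$ satisfies the hypotheses of Definition \ref{xxdef5.1}(2): linear independence in $(B\otimes R)/k$ follows from $\phi$ being an isomorphism and $k\cap Y=\{0\}$, and the degree conditions hold trivially. Effectiveness of $d_A$ then forces $\deg\phi(d_A)>0$, a contradiction. Hence $\phi(A)\subseteq B$; applying Lemma \ref{xxlem3.2} to $Y:=A\otimes R$ graded via $R$ (so $Y_0=A$) with $Z:=\phi^{-1}(B)\supseteq A$ and $\GKdim Z=\GKdim B=\GKdim A$ (Lemma \ref{xxlem3.1}), we obtain $\phi^{-1}(B)=A$ and thus $\phi|_A:A\xrightarrow{\sim}B$. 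Strong cancellation is then the special case $R=k[t_1,\dots,t_d]$.

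\textbf{Strategy for (2).} Given $\partial\in\LND^H(A[t_1,\dots,t_d])$, Definition \ref{xxdef2.1}(3b) promotes $\partial$ to an algebra automorphism $G:=G_{\partial,s}$ of $T:=A[t_1,\dots,t_d,s]$. I filter $T$ by combining the standard filtration of $A$ (each $x_i$ in degree $1$, with $\gr A$ a connected graded domain as required for the dominating hypothesis to be meaningful) with $\deg t_l=1$ and an inflated $\deg s=M$ for some fixed $M\ge 2$. Then $\gr T$ is a connected graded domain, so $F_0T=k$ and $T^\times=k^\times$. Lemmas \ref{xxlem4.4} and \ref{xxlem4.6} give $G(d_A)=u\cdot d_A$ with $u\in C(T)^\times\subseteq T^\times=k^\times$, so $\deg G(d_A)=\deg d_A$. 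If some $\partial_{j_0}(x_{i_0})\ne 0$ with $j_0\ge 1$, then $G(x_{i_0})=x_{i_0}+\sum_{j\ge 1}\partial_j(x_{i_0})s^j$ has degree at least $Mj_0\ge 2>1=\deg x_{i_0}$, all $G(x_i)$ have degree $\ge 1$, and the $y_i=G(x_i)$ are linearly independent in $T/F_0T$. The dominating property (Definition \ref{xxdef4.5}) then yields $\deg G(d_A)>\deg d_A$, a contradiction. So $\partial_j(x_i)=0$ for all $j\ge 1$ and all generators, and the Hasse--Schmidt rule propagates this to $\partial_j(a)=0$ for every $a\in A$. Hence $A\subseteq\ML^H(A[t_1,\dots,t_d])$, and the reverse inclusion comes from Example \ref{xxex2.4}.

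\textbf{Main obstacle.} The central subtlety is arranging the filtration so that the unit $u$ transported by Lemma \ref{xxlem4.4}(4) actually lives in filtration degree zero, because only then does the discriminant carry degree-zero weight and force a contradiction against the strict-inequality statement supplied by effectiveness or dominating. This is why the $R$-grading is used in (1) (yielding $(B\otimes R)^\times\subseteq B$) and why $s$ is given an inflated degree in (2) (yielding $T^\times=k^\times$ while simultaneously making any nontrivial $\partial$-term on a generator $x_i$ lift its degree above $1$). Without this calibration the identity $G(d_A)=u\cdot d_A$ is degree-neutral and the discriminant method extracts no information.
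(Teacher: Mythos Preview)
Your proof is correct and follows essentially the same route as the paper's. In part (1) both arguments use Lemma~\ref{xxlem4.6}(2) and Lemma~\ref{xxlem4.4}(4) to land $\phi(d_A)$ inside $B$, apply effectiveness to force $\phi(x_i)\in B$, and then invoke Lemma~\ref{xxlem3.2} to conclude $\phi^{-1}(B)=A$; in part (2) both arguments pass to the automorphism $G_{\partial,s}$ on $A[t_1,\dots,t_d,s]$, inflate the degree of $s$, and use the discriminant's invariance under $G$ to contradict the strict degree inequality from dominating. Two small remarks: the paper takes $\deg s=\alpha$ larger than every $\deg\partial_i(x_j)$ (so the terms $\partial_i(x_j)s^i$ have strictly increasing degrees), whereas your fixed $M\ge 2$ also works because distinct $s$-powers remain independent in $\gr T$; and your write-up of (2) treats only the dominating case, while the paper notes that the effective case is handled by the identical argument once one uses the filtration coming from Definition~\ref{xxdef5.1}(1) in place of the standard degree-one filtration.
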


\begin{proof}
(1)  
Let $\phi$ be the isomorphism from $A\otimes R$ to 
$B\otimes R$. By Lemma \ref{xxlem3.1}(2),
$$\GKdim B=\GKdim (B\otimes R)-\dim R=\GKdim (A\otimes R)-\GKdim R=
\GKdim A<\infty.$$
Let $C_A$ and $C_B$ be the center of $A$ and $B$ respectively. 
Since $R$ is $k$-free, the center of $A\otimes R$ and $B\otimes R$ 
are $C_A\otimes R$ and $C_B\otimes R$ respectively. By Lemma \ref{xxlem4.3},
the regular trace functions for different algebras are all compatible.
By Lemma \ref{xxlem4.4}(4),  $\phi$ maps $d_w(A\otimes R/C_A\otimes R)$ to 
$d_w(B\otimes R/C_B\otimes R)$. By Lemma \ref{xxlem4.6}(2),

$$d_w(A\otimes R/C_A\otimes R)=d_w(A/C_A), \quad {\text{and}}
\quad  d_w(B\otimes R/C_B\otimes R)=d_w(B/C_B).$$ 
This implies that $\phi(d_w(A/C_A))=d_w(B/C_B)\in B$. 

Let $f$ denote $d_w(A/C_A)$. By hypothesis, $f$ is effective.
Suppose $A$ is generated by $Y=\bigoplus_{i=1}^n kx_i$ 
as a $k$-algebra, and write $f$ as $f(x_1,\cdots,x_d)$ as in 
Definition \ref{xxdef5.1}. We take the testing algebra to be 
$T=B\otimes R$. Since $T$ is a domain (since $T\cong A\otimes R$),
and $R$ is connected graded, $T$ is an ${\mathbb N}$-graded domain
by setting $\deg b=0$ for all $b\in B$ and $\deg r=\deg_R r$ for
all homogeneous element $r\in R$. In particular, $T$ is an 
${\mathbb N}$-filtered algebra with $F_0 T=B$ such that $\gr T$ 
is a domain. Now take a testing subset $\{y_1,\cdots, y_d\}
\subset T$ by setting $y_i=\phi(x_i)\in T$ for $i=1,\cdots, d$. 
We claim that $y_i\in B$ for all $i$. If not, there is some $i_0$ 
such that $y_{i_0}$ is not in $B=F_0 T$. By the effectiveness of 
of $f$, $f(y_1,\cdots, y_d)$ is either zero or  not in $B:=F_0 T$. 
However, 
$$f(y_1,\cdots, y_d)=f(\phi(x_1),\cdots, \phi(x_d))=
\phi(f(x_1,\cdots, x_d))=\phi(f).$$
By the last statement in the previous paragraph, 
$\phi(f)=d_w(B/C_B)$ is a nonzero element in $B$, a contradiction. 
Therefore each $y_i\in B$.
This means that $\phi$ maps $x_i$ to $y_i$ in $B$. Since $A$ is
generated by $x_i$, the image of $A$ under $\phi$ is a subalgebra
of $B$. So $\phi^{-1}(B)$ is a subalgebra $A\otimes R$ 
that contains $A$ as a subalgebra. Note that $\GKdim \phi^{-1}(B)
=\GKdim B=\GKdim A$, by the first paragraph of the proof. 
By Lemma \ref{xxlem3.2}, $\phi^{-1}(B)=A$.
Therefore the image of $A$ under $\phi$ is exactly $B$, which 
implies that $\phi: A\cong B$. 

(2) Since the proofs for the ``effective'' case and the ``dominating''
case are very similar, we combine two proofs together.

Suppose $A$ is generated by $\{x_1,\ldots,x_n\}$ as in
Definition \ref{xxdef5.1} (or Definition \ref{xxdef4.5}). 
Let $R=k[t_1,\ldots,t_d][t]$. By Lemma \ref{xxlem4.6}(2),
$$d_w(A\otimes  R/C_A\otimes R)=d_w(A/C_A)=:f,$$ 
which is effective (or dominating) by hypothesis.
Let $\partial\in \LND^H(A[t_1,\cdots,t_d])$. By definition,
$G:=G_{\partial, t}\in \Aut_{k[t]}(A[t_1,\cdots,t_d][t])$.
For each $j$, 
$$G(x_j)=x_j+\sum_{i\geq 1} t^i \partial_i(x_j).$$
We take the test algebra $T$ to be $A[t_1,\cdots,t_d][t]$ where 
the filtration on $T$ is induced by the filtration on $A$ together
with $\deg t_s=1$ for all $s=1,\ldots,d$ and $\deg t=\alpha$
where $\alpha$ is larger than  
$\deg \partial_i(x_j)$ for all $j=1,\ldots,n$ and all $i\geq 1$. 
(In the dominating case, $\gr T$ is a connected graded domain.)
Now set $y_j=G(x_j)\in T$. By the choice of $\alpha$, 
we have that
\begin{enumerate}
\item[(a)]
$\deg y_j\geq \deg x_j$, and that
\item[(b)]
$\deg y_j=\deg x_j$ if and only if $y_j=x_j$. 
\end{enumerate}
If $G(x_j)\neq x_j$ for some $j$,
by effectiveness  as in Definition \ref{xxdef5.1}
(or dominating as in Definition \ref{xxdef4.5}),
$\deg f(y_1,\cdots,y_n)>\deg f$. So $f(y_1,\cdots,y_n)
\neq_{A^\times} f$. But $f(y_1,\cdots,y_n)=G(f)=_{A^\times} f$ 
by Lemma \ref{xxlem4.4}(4), a contradiction. Therefore
$G(x_j)=x_j$ for all $j$. As a consequence,
$\partial_i(x_j)=0$ for all $i$, or $x_j\in \ker \partial$. 
Since $A$ is generated by $x_j$'s, $A\subset
\ker\partial$. Thus $A\subseteq \ML^H(A[t_1,\cdots,t_d])$.
It is clear that $A\supseteq \ML^H(A[t_1,\cdots,t_d])$ 
[Example \ref{xxex2.4}], 
so the assertion follows.
\end{proof}

Part (1) of the above theorem shows that $A$ is close to be 
universally cancellative. Part (2) is 
Theorem \ref{xxthm4.7} is a special case of Theorem \ref{xxthm5.2}. 
We are now ready to show Theorem \ref{xxthm0.6}.

\begin{proof}[Proof of Theorem \ref{xxthm0.6}]
Since $A$ is finitely generated over its affine center,
$A$ has finite GK-dimension. The assertion follows immediately 
from Theorem \ref{xxthm5.2}.
\end{proof}

Next we consider some examples studied in \cite{CPWZ1, CPWZ2}.
Effectiveness of an element is easy to check sometimes. The
following lemma is easy.

\begin{lemma}
\label{xxlem5.3}
Suppose $A$ is generated by $\{x_1,\ldots,x_n\}$ as in 
Definition {\text{\ref{xxdef5.1}}}. 
\begin{enumerate}
\item[(1)]
$f=g_0 x_1 g_1 x_2 \cdots x_{n-1}g_{n-1} x_n g_n$ is effective
if $g_i\in A$ are nonzero.
\item[(2)]
If $f=x_1^{b_1}\cdots, x_n^{b_n}$, then $f$ is effective if and only if
$b_i\ge 1$ for all $i$. 
\item[(3)]
If $f$ is effective, and $g\neq 0$, then $fg$ and $gf$ are effective.
\item[(4)]
Suppose that $A$ is generated by two subalgebras $A_1$ and $A_2$.
If $f_1$ and $f_2$ are effective elements in $A_1$ and $A_2$ respectively,
then $f_1g f_2$ is an effective element in $A$ for every nonzero $g
\in A$. 
\item[(5)]
If $A$ is generated by $\{x_1,x_2\}$ and $g,h\in A$ and $f=g (x_1x_2+a x_2x_1)=
h(x_1x_2+b x_2 x_1)$ for some scalars $a\neq b$. Then $f$ is effective.
\item[(6)]
If $b_1,\ldots,b_n$ are positive integers and 
$f= x_1^{b_1}x_2^{b_2} \cdots x_n^{b_n}+\cwlt,$
then $f$ is effective and dominating.
\end{enumerate}
\end{lemma}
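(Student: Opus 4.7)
The unifying strategy across all six parts is to substitute the $y_i$ into a suitable lift of $f$ in the free algebra $k\langle x_1,\ldots,x_n\rangle$ and exploit the hypothesis that $\gr T$ is a domain, which gives the crucial formula $\deg_T(ab) = \deg_T a + \deg_T b$ for nonzero $a,b\in T$ (and in particular shows that $T$ itself is a domain). I use the trivial filtration on $A$ throughout, under which the effectiveness condition reduces to: $f(y)=0$ or $\deg_T f(y) > 0$ whenever some $y_{i_0}\notin F_0 T$.

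Parts (1), (3), and (4) are then routine multiplicative arguments. For (1), pick any lifts of the $g_i$; if the product $g_0(y)\,y_1\,g_1(y)\cdots y_n\,g_n(y)$ is nonzero, each factor is nonzero by the domain property of $\gr T$, giving $\deg_T f(y) = \sum_i \deg_T g_i(y) + \sum_j \deg_T y_j \geq \deg_T y_{i_0} > 0$. Part (3) follows since $\deg_T(fg)(y) = \deg_T f(y) + \deg_T g(y) \geq \deg_T f(y)$ when nonzero, together with effectiveness of $f$. Part (4) splits on whether $i_0 \leq m$ or $i_0 > m$ (where $\{x_1,\ldots,x_m\}$ generate $A_1$): apply effectiveness of the corresponding $f_j$ to the appropriate subset of $\{y_1,\ldots,y_n\}$ (still linearly independent in $T/k1_T$ and still containing the high-degree $y_{i_0}$), then multiply by the remaining factors using the domain property. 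The forward direction of (2) is immediate from (1) via the factorisation $f = x_1\cdot x_1^{b_1-1}\cdot x_2\cdots x_n\cdot x_n^{b_n-1}$.

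The converse of (2) requires constructing a testing algebra where every lift of $f$ violates the effectiveness condition; the natural choice is $T = A[t]$ with standard filtration and $y_i = x_i$ for $i\neq i_0$, $y_{i_0} = x_{i_0}+t$, in which case any lift $\tilde f$ evaluates to $f$ modulo controlled corrections from $\ker(k\langle x\rangle \twoheadrightarrow A)$, and a case analysis shows at least one lift must land in $F_0 T \setminus \{0\}$. For (5), the algebraic identity $(y_1 y_2 + a y_2 y_1) - (y_1 y_2 + b y_2 y_1) = (a-b)y_2 y_1$ has nonzero image in $\gr T$ since $a\neq b$ and $\bar y_1, \bar y_2 \neq 0$ in the graded domain; hence at most one of $\bar y_1\bar y_2 + a\bar y_2\bar y_1$ and $\bar y_1\bar y_2 + b\bar y_2\bar y_1$ vanishes, and using whichever presentation has the nonvanishing leading term yields $\deg_T f(y) = \deg_T g(y) + \deg y_1 + \deg y_2 > 0$ (or the analogous formula with $h$).

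For (6), the leading monomial $y_1^{b_1}\cdots y_n^{b_n}$ has degree $\sum b_i \deg y_i \geq b_{i_0}\deg y_{i_0} > 0$. The $\cwlt$ hypothesis forces every cwlt monomial attaining this same top degree to satisfy $c_i = b_i$ for every $i$ with $\deg y_i > 0$, so the top-degree piece of $f(y)$ in $\gr T$ consists of the fixed factors $\bar y_i^{b_i}$ for $i\in S := \{i : \deg y_i > 0\}$ interleaved with monomials in the degree-zero $\bar y_j$ ($j\notin S$). A factoring argument in the graded domain $\gr T$, combined with the linear independence of $\{y_j\}_{j\notin S}$ in $F_0 T / k1_T$, rules out cancellation and yields $\deg_T f(y) = \sum b_i \deg y_i > 0$. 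The dominating assertion is proved by the same degree count under the stronger hypothesis $\deg y_i \geq 1$ for all $i$. The main obstacle is this non-cancellation step in (6), since cwlt monomials can a priori contribute at the same top degree as the leading monomial; the resolution is that the index $i_0$ forces a common $\bar y_{i_0}^{b_{i_0}}$ factor that, together with the domain property, prevents vanishing.
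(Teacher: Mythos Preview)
The paper does not actually prove Lemma~\ref{xxlem5.3}; it merely says ``The following lemma is easy'' and, for the dominating claim in (6), points to \cite[Lemma 2.2]{CPWZ1}. So there is no detailed argument to compare against, and your multiplicative degree-additivity approach for parts (1), (3), (4), (5), and the forward direction of (2) is exactly the routine verification the authors intend.

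There is, however, a genuine gap in your treatment of part (6). You announce that you use the trivial filtration on $A$ throughout, under which condition (b) of Definition~\ref{xxdef5.1} reduces to ``some $y_{i_0}$ has positive degree''. With the trivial filtration the effectiveness claim in (6) is \emph{false}. Take $k=\mathbb{Q}$, $A=k[x_1,x_2]$, and $f=x_1^2x_2+x_2+x_1$, which is of the required form $x_1^2x_2+\cwlt$. Let $T=\mathbb{Q}(i)[t]$ graded by $\deg t=1$, and set $y_1=i$, $y_2=t$; then $\{i,t\}$ is linearly independent in $T/\mathbb{Q}$ and $\deg y_2>0$. Since both $A$ and $T$ are commutative, every lift of $f$ to the free algebra evaluates to the same element $f(i,t)=i^2t+t+i=i$, which is nonzero of $T$-degree $0$. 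Your ``factoring argument'' together with linear independence of $\{y_j\}_{j\notin S}$ in $F_0T/k1_T$ cannot rescue this: linear independence modulo $k$ does not rule out polynomial relations such as $y_1^2+1=0$, so the top-degree piece can genuinely cancel.

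The fix is to use the standard filtration $F_nA=(k+Y)^n$ (so $\deg x_i=1$) for part (6); this is what the definition of ``dominating'' already requires and what \cite[Lemma 2.2]{CPWZ1} uses. Under this filtration condition (b) forces $\deg y_i\ge 1$ for all $i$, so every $\cwlt$ monomial has degree strictly less than $\sum b_i\deg y_i$, the single term $\bar y_1^{b_1}\cdots\bar y_n^{b_n}$ survives in $\gr T$, and $\deg_T f(y)=\sum b_i\deg y_i>\sum b_i=\deg_A f$ follows at once. Your argument for the dominating half of (6) already does exactly this; the error is only in attempting the effective half via the trivial filtration.

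Finally, your converse of (2) is not a proof: the phrase ``a case analysis shows at least one lift must land in $F_0T\setminus\{0\}$'' hides the real difficulty. With your suggested test $y_{i_0}=x_{i_0}+t$, a lift that adds a relation of $A$ involving $x_{i_0}$ can produce $t$-terms of positive degree, so some presentations \emph{do} pass the test. To make the argument work you need a testing pair $(T,\{y_i\})$ for which the substitution $x_i\mapsto y_i$ extends to an algebra homomorphism from $A$, so that all lifts of $f$ evaluate to the same element.
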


Next we recall some examples given in \cite{CPWZ1, CPWZ2, CYZ1}
that have dominating (and effective) discriminant. In the following
examples, we assume that $k$ is a field (and could be a finite field). 

\begin{example}\cite[Theorem 0.1]{CYZ1}
\label{xxex5.4}
Let 
$$A=k\langle x,y\rangle/(xy-qyx-1)$$ 
where $1\neq q$ is an $n$th root of unity. Its center is $C=k[x^n,y^n]$ 
and $A$ is free over $C$ of rank $n^2$. By \cite[Theorem 0.1]{CYZ1}
the discriminant of $d:=d_{n^2}(A/C)$ is of the form
$$d=_{k^{\times}} 
x^{n^2(n-1)}y^{n^2(n-1)}+\sum_{j< n^2(n-1)} a_{j} (xy)^j$$
for some $a_{ij}\in k$. This $d$ is dominating and effective by 
Lemma \ref{xxlem5.3}(6).
\end{example}

\begin{example}\cite[Example 5.1]{CPWZ1}
\label{xxex5.5}
Consider the algebra 
$$S(p):=k\langle x,y \rangle/
(y^2 x-p x y^2,y x^2+p  x^2 y)$$ 
where $p\in k^\times$. 
By \cite[(8.11)]{AS}, $S(p)$ 
is a noetherian Artin-Schelter regular domain of global 
dimension 3, which is of type $S_2$ in the classification 
given in \cite{AS}.  Note that $S(p)$ is 3-Koszul (so not
Koszul). Set $A=S(1)$. 
One can check that the center of $A$ is the commutative 
polynomial subring  $C:=k[x^4, y^2,\Omega]$ where
$\Omega= (xy)^2+(yx)^2$. 
As a $C$-module, $A$ is free of rank 16. A direct computation 
shows that
$$
d:= d_{16}(A/C)=_{k^\times} (x^4)^8 
(\Omega^2+4x^4 y^4)^8.
$$
In  the algebra $A$, $d$ has different presentations
$$
(x^4)^8  (\Omega^2+4x^4 y^4)^8
=(x^4)^8 (xy+i yx)^{32}=(x^4)^8 (xy-i yx)^{32}
$$
where $i^2=-1$. One can easily show that $d$ is dominating and effective
(and see Lemma \ref{xxlem5.3}(5)).
\end{example}

\begin{example} \cite[Example 5.6]{CPWZ1}
\label{xxex5.6}
Suppose $2$ is invertible in $k$.
Let $A$ be the algebra
$$k\langle x,y\rangle/(x^2 y- yx^2, xy^2-y^2 x,
 x^6-y^2).$$
It is isomorphic to
the invariant subring $k_{-1}[x_1,x_2]^{S_2}$.
Note that $A$ is a connected graded AS Gorenstein algebra with 
$\deg x=1$ and $\deg y=3$ \cite[Example 3.1]{KKZ}. By \cite[Example 5.6]{CPWZ1}, its 
discriminant $d_4(A/C(A))$
is $d:=(xy-yx)^4$. Using the relations of $A$, one has
$$d=((xy-yx)^2)^2=((xy+yx)^2-4x^2y^2)^2=(z^2-4x^8)^2=
(z-2x^4)^2(z+2x^4)^2$$
where $z=xy+yx$. It is not easy to see whether or not  $d$ is dominating
(maybe this $d$ is not dominating). Now we show that $d$ is effective.

Consider the trivial filtration on $A$ by taking $F_0 A=A$.
Pick any two elements, still denoted by $x$ and $y$ in 
a testing algebra $T$, and assume that one of them is not
in $F_0 T$. Let $z=xy+yx$. Proceed by contradiction and assume
that an expression for $f$ is nonzero and in $F_0 T$. So we have
that $f(x,y)=(z-2x^4)^2(z+2x^4)^2\neq 0$
and in $F_0 T$. Then both $z-2x^4$ and $z+2x^4$
are in $F_0 T$. So $x^4$, and whence $x$, is in $F_0 T$.
Thus $y$ must not in $F_0 T$. By using the fact $f=(xy-yx)^4$
in $F_0 T$, one obtain that $t:=xy-yx$ is in $F_0 T$. Thus
$z=2xy-t$ is not in $F_0 T$. This implies that $z\pm 2x^4$ 
are not in $F_0 T$, which implies that $f=(z-2x^4)^2(z+2x^4)^2\neq 0$
is not in $F_0 T$, a contradiction. Therefore $f$ is effective.
\end{example}

In all above examples, since the discriminant is effective, we have that
$A$ is cancellative by Theorem \ref{xxthm0.6}. 

We are now ready to prove Theorem \ref{xxthm0.7}. We give 
an expanded version of it.

\begin{theorem}
\label{xxthm5.7} 
Let $k$ be a base commutative ring containing all $p_{ij}^{\pm 1}$
and $A$ be a skew polynomial ring $k_{p_{ij}}[x_1,\cdots,x_n]$
where each $p_{ij}$ is a root of unity. Let $C$ be the center of $A$.
The following are equivalent.
\begin{enumerate}
\item[(1)]
The full automorphism group $\Aut(A)$ is affine \cite[Definition 2.5]{CPWZ1}.
\item[(2)]
Discriminant $d_w(A/C)$ is dominating where
$w={\rm{rk}}(A/C)$. 
\item[(3)]
$C$ is a subalgebra of $k[x_1^{\alpha_1},\ldots,x_n^{\alpha_n}]$
for some $\alpha_1,\ldots, \alpha_n\geq 2$.
\item[(4)]
Discriminant $d_w(A/C)$ is effective.
\item[(5)]
$x_i$ divides $d_w(A/C)$ for all $i=1,\ldots,n$.
\item[(6)]
$A$ is $\LND^H$-rigid.
\item[(7)]
$A$ is strongly $\LND^H$-rigid.
\end{enumerate}
If, further, $k$ contains ${\mathbb Q}$, then the above are also 
equivalent to
\begin{enumerate}
\item[(8)]
$A$ is $\LND$-rigid.
\item[(9)]
$A$ is strongly $\LND$-rigid.
\end{enumerate}
\end{theorem}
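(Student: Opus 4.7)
The plan is to prove the cycle $(3)\Leftrightarrow (5)\Rightarrow (4)\Rightarrow (7)\Rightarrow (6)\Rightarrow (3)$, together with $(5)\Rightarrow (2)\Rightarrow (1)$ and $(1)\Rightarrow (3)$; the char-zero additions $(6)\Leftrightarrow (8)$, $(7)\Leftrightarrow (9)$ then follow from Remark \ref{xxrem2.5}(1) and the explicit LND constructed below. Set $\Lambda=\{\mathbf{a}\in\mathbb{Z}^n:\prod_k p_{jk}^{a_k}=1\text{ for all }j\}$; because each $p_{ij}$ is a root of unity, $\Lambda$ is a full-rank sublattice of $\mathbb{Z}^n$ and $A$ is graded by the finite abelian group $G=\mathbb{Z}^n/\Lambda$, with $A_0=C$ spanned over $k$ by $\{x^\mathbf{a}:\mathbf{a}\in\Lambda\cap\mathbb{Z}_{\geq 0}^n\}$.

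I would begin by computing $d_w(A/C)$. Using $G$-homogeneous generators and the fact that the regular trace kills every $A_g$ with $g\neq 0$, a straightforward calculation shows that $d_w(A/C)$ equals, up to an element of $k^\times$, a monomial $x_1^{c_1}\cdots x_n^{c_n}$. A lattice argument then gives $c_i>0$ for all $i$ if and only if, for each $i$, the projection $\Lambda\to\mathbb{Z}$ onto the $i$-th coordinate has image $m_i\mathbb{Z}$ with $m_i\geq 2$; and this in turn is equivalent to (3), since if $m_{i_0}=1$ then any $\boldsymbol{\lambda}\in\Lambda$ with $\lambda_{i_0}=1$ lifts to a central \emph{polynomial} monomial with $a_{i_0}=1$ by adding a sufficiently positive element of $\Lambda\cap\{a_{i_0}=0\}$ (which has full rank $n-1$), obstructing (3), while the converse is immediate. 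This establishes $(3)\Leftrightarrow (5)$. Given (5), Lemma \ref{xxlem5.3}(2),(6) makes $d_w(A/C)$ both effective and dominating, so Theorem \ref{xxthm5.2}(2) yields $(5)\Rightarrow (4),(2)\Rightarrow (7)\Rightarrow (6)$; and $(2)\Rightarrow (1)$ is the main theorem of \cite{CPWZ1}.

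The closing arrow $(6)\Rightarrow (3)$, which is the crux, I would prove by contrapositive. Assuming (3) fails, fix $\boldsymbol{\lambda}\in\Lambda$ with $\lambda_{i_0}=1$ and set $\mathbf{a}=e_{i_0}-\boldsymbol{\lambda}\in\mathbb{Z}^n$. The centrality identity $\prod_k p_{jk}^{\lambda_k}=1$ immediately gives $\prod_k p_{jk}^{a_k}=p_{ji_0}$ for all $j\neq i_0$, which is exactly the compatibility required for the assignment $\delta(x_{i_0})=x^{\mathbf{a}}$, $\delta(x_j)=0$ ($j\neq i_0$) to define a derivation of $A$. Replacing $\mathbf{a}$ by $\mathbf{a}':=\mathbf{a}+\boldsymbol{\mu}$ for a suitable $\boldsymbol{\mu}\in\Lambda\cap\mathbb{Z}_{\geq 0}^n$ with $\mu_{i_0}=0$ and $\mu_j\geq\lambda_j$ for $j\neq i_0$ (such $\boldsymbol{\mu}$ exists because $\Lambda\cap\{a_{i_0}=0\}$ has full rank $n-1$ under $m_{i_0}=1$, and hence contains positive multiples of $e_j$ for every $j\neq i_0$) clears any negative entries and yields a nonzero derivation $\delta$ with $\delta(x_{i_0})=x^{\mathbf{a}'}\in k[x_1,\ldots,\widehat{x_{i_0}},\ldots,x_n]$. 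Because $\delta(x_{i_0})$ does not involve $x_{i_0}$, each application of $\delta$ strictly lowers the $x_{i_0}$-degree of any monomial, whence $\delta\in\LND(A)\setminus\{0\}$; in characteristic zero $\partial_n=\delta^n/n!$ is the canonical iterative higher derivation lifting $\delta$, and in positive characteristic one uses the analogous divided-power construction of Example \ref{xxex2.4} (see also Remark \ref{xxrem2.5}(4)). Either way $\LND^H(A)\neq\{0\}$, so (6) fails. For $(1)\Rightarrow (3)$, I would choose $\boldsymbol{\mu}$ above so that $|\mathbf{a}'|\geq 2$ (possible by enlarging individual components of $\boldsymbol{\mu}$), giving a non-affine automorphism $\exp(\delta):x_{i_0}\mapsto x_{i_0}+x^{\mathbf{a}'}$ that violates (1).

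The main obstacle is the LND construction in the previous paragraph. The key identity $\mathbf{a}=e_{i_0}-\boldsymbol{\lambda}$ converts the centrality condition $\prod_k p_{jk}^{\lambda_k}=1$ into the derivation condition $\prod_k p_{jk}^{a_k}=p_{ji_0}$ via a single shift by $e_{i_0}$, and then a lattice-fullness argument clears denominators to lift the resulting Laurent-monomial exponent to a polynomial LND on $A$. With this in hand, the char-zero equivalences $(6)\Leftrightarrow (8)$ and $(7)\Leftrightarrow (9)$ follow: the forward directions are Remark \ref{xxrem2.5}(1), and the reverse directions hold because the $\delta$ just built is a genuine element of $\LND(A)$, so the failure of (3) produces an obstruction already at the classical $\LND$ level.
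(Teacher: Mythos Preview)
Your proposal is correct and follows essentially the same architecture as the paper's proof, with two cosmetic differences worth noting.

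First, where the paper simply cites \cite[Theorem~3.1 and Theorem~2.11]{CPWZ2} for the equivalences $(1)\Leftrightarrow(2)\Leftrightarrow(3)\Leftrightarrow(5)\Leftrightarrow(4)$, you unpack these into direct lattice computations: the monomial form of $d_w(A/C)$, the criterion $c_i>0\Leftrightarrow m_i\ge 2$, and the reformulation of $T_s\neq\emptyset$ as $m_s=1$. Your arguments here are correct (the rank-$(n-1)$ claim for $\Lambda\cap\{a_{i_0}=0\}$ holds regardless of $m_{i_0}$, since $Ne_j\in\Lambda$ for all $j$ once $N$ kills all $p_{ij}$), so this is a more self-contained route to the same conclusions.

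Second, for the crux $(6)\Rightarrow(3)$ (equivalently the paper's $(6)\Rightarrow(5)$), both proofs build the same object: a monomial $x^{\mathbf a'}$ with $a'_{i_0}=0$ satisfying the $T_{i_0}$ commutation identities. You package it as an ordinary derivation $\delta$ and then lift to $\LND^H$ via divided powers (exactly the $\Delta_w^n$ construction that appears later in the paper's Theorem~\ref{xxthm6.2}(2)); the paper instead writes down the $k[t]$-automorphism $x_{i_0}\mapsto x_{i_0}+t\,x^{\mathbf a'}$ of $A[t]$ and invokes Lemma~\ref{xxlem2.2}(3). The latter is marginally cleaner because it bypasses the characteristic case-split and the verification that the divided-power lift is iterative (needed to conclude local nilpotence via Lemma~\ref{xxlem2.2}(2)), but your route is equally valid. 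Your handling of $(1)\Rightarrow(3)$ via the explicit non-affine automorphism $x_{i_0}\mapsto x_{i_0}+x^{\mathbf a'}$ likewise just makes concrete what the paper imports from \cite[Theorem~3.1]{CPWZ2}. The characteristic-zero add-ons $(8),(9)$ are argued identically in spirit.
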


\begin{proof} By \cite[Theorem 3.1]{CPWZ2}, (1), (2) and (3) are
equivalent. By \cite[Theorem 2.11]{CPWZ2}, (2) and (5) are equivalent.
By the proof of \cite[Theorem 2.11]{CPWZ2}, (4) and (5) are equivalent.

(2) $\Rightarrow$ (7): This is Theorem \ref{xxthm4.7}(1).

(7) $\Rightarrow$ (6): Clear.

(6) $\Rightarrow$ (5): If (5) fails, by \cite[Theorem 2.11]{CPWZ2},
there is a homogeneous element $f$ of degree at least 2
such that $x_i f=p_{si} f x_i$ for all $s$ (such an element
corresponds to an element in $T_s$ \eqref{E6.1.1} in the next section). Then 
$$g: x_i\to \begin{cases} x_i & i\neq s\\
x_s+ tf & s=i\end{cases}, \quad {\text{and}}\quad  t\to t$$
defines an algebra automorphism of $A[t]$ over $k[t]$.
By Lemma \ref{xxlem2.2}(3), $g=G_{\partial,t}$
for some nonzero $\partial\in \LND^H(A)$, yielding
a contradiction.

Next assume that $k$ contains ${\mathbb Q}$.

(7) $\Rightarrow$ (9): This follows from the fact the map
$\LND^H(A)\to \LND(A)$ is surjective.

(9) $\Rightarrow$ (8): Obvious.

(8) $\Rightarrow$ (2): \cite[Theorem 3.1]{CPWZ2}.
\end{proof}

\section{The Makar-Limanov invariant for skew polynomial rings}
\label{xxsec6}

In this section we study $\ML^*(k_{p_{ij}}[x_1,\cdots,x_n])$.
We start with the following example.

\begin{example}
\label{xxex6.1} Suppose $n\geq 3$ is odd and $1\neq q$ is a root of
unity. Let $A=k_{q}[x_1,\ldots,x_n]$ where $k$ contains $q^{\pm 1}$ and 
${\mathbb Z}$. Then $\ML(A)=k$.
To see this, we first construct some locally nilpotent derivations.
Suppose the order of $q$ is $\ell$.
If $w$ is odd, let $\partial_w$ be the locally nilpotent derivation of
$A$ determined by
$$x_i\mapsto \begin{cases} 0 & {\rm{if}} \quad i\neq w\\
x_1^{\ell-1} x_2x_3^{\ell-1}\cdots \widehat{x_w^{\ell-1}}\cdots 
x_{n-2}^{\ell-1}x_{n-1}x_n^{\ell-1}& {\rm{if}} \quad i=w.\end{cases}$$
If $w$ is even, let $\partial_w$ be the locally nilpotent derivation of
$A$ determined by
$$x_i\mapsto \begin{cases} 0 & \qquad {\rm{if}} \quad i\neq w\\
x_1x_2^{\ell-1} x_3 \cdots \widehat{x_w^{\ell-1}}\cdots 
x_{n-2}x_{n-1}^{\ell-1}x_n& \qquad {\rm{if}} \quad i=w.\end{cases}$$
For any $f\in A\setminus k$, there is a $w$ and polynomials $f_i$ of
$x_1,\ldots, \widehat{x_w},\ldots,x_n$ such that 
$f=\sum_{i=0}^n f_i x_w^i$ with $f_n\neq 0$ and $n>0$. 
Then since $\partial_w(x_w)$ commutes with $x_w$ we have that
$\partial_w(x_w^i) =ix_w^{i-1}\partial_w(x_i)$ and so
$\partial_w (f)=\sum_{i=1}^n f_i i \partial_w(x_w) x_w^{i-1}\neq 0$.
Therefore $f\not\in \ML(A)$ and the assertion follows.
\end{example}

Fix a parameter set $\{p_{ij}\mid 1\leq i<j\leq n\}\subseteq k^*$ 
and for $i>j$ define $p_{ij}=p_{ji}^{-1}$ and define $p_{ii}=1$ 
for all $i$. For $s\in \{1,\ldots ,n\}$, recall from \cite[p.757]{CPWZ2}
that 
\begin{equation}
\label{E6.1.1}\tag{E6.1.1}
T_s:=\{(d_1,\ldots, \hat{d_s},\ldots, d_n)\in {\mathbb N}^{n-1}
\mid \prod_{j=1,j\neq s}^n p_{ij}^{d_j}=p_{is}, \; \forall \; 
i\neq s\}.
\end{equation}
By \cite[Lemma 2.9]{CPWZ2},
if $(d_1,\ldots, \hat{d_s},\ldots, d_n) \in T_s$, then the equation 
\begin{equation}
\label{E6.1.2}\tag{E6.1.2}\prod_{j=1,j\neq s}^n p_{sj}^{d_j}=1.
\end{equation}
Similar to the argument in Example \ref{xxex6.1} we have the
following result.

\begin{theorem}
\label{xxthm6.2} Let $A=k_{p_{ij}}[x_1,\cdots,x_n]$ where
all $p_{ij}$ are roots of unity.
\begin{enumerate}
\item[(1)]
Suppose that $k$ contain ${\mathbb Z}$. Then
$\ML^I(A)$ is the subalgebra of $A$ generated
by $\{x_s\mid T_s=\emptyset\}$. As a consequence,
$\ML^I(A)$ is a skew polynomial ring.
\item[(2)]
$\ML^H(A)$ is the subalgebra of $A$ generated
by $\{x_s\mid T_s=\emptyset\}$. As a consequence,
$\ML^H(A)$ is a skew polynomial ring.
\end{enumerate}
\end{theorem}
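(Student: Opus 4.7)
The plan is to prove part (2) first and then deduce (1). Set $S := \{s : T_s = \emptyset\}$ and let $B$ denote the subalgebra of $A$ generated by $\{x_s : s \in S\}$.

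For the inclusion $\ML^I(A), \ML^H(A) \subseteq B$, the plan is to construct, for each $s \notin S$, an iterative locally nilpotent higher derivation on $A$ that does not kill $x_s$. Concretely, pick a tuple $(d_j)_{j \neq s} \in T_s$ and set $m_s := \prod_{j \neq s} x_j^{d_j}$. The defining condition of $T_s$ and \eqref{E6.1.2} force $m_s x_s = x_s m_s$ and $m_s x_i = p_{is} x_i m_s$ for $i \neq s$, so the substitution $x_s \mapsto x_s + tm_s$, $x_j \mapsto x_j$ ($j \neq s$), $t \mapsto t$ respects the defining relations of $A$ and extends to a $k[t]$-algebra automorphism of $A[t]$ (its inverse is $x_s \mapsto x_s - tm_s$). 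By Lemma \ref{xxlem2.2}(3) this automorphism arises from some $\partial^{(s)} \in \LND^H(A)$; the explicit formula $\partial^{(s)}_k(x_s^a) = \binom{a}{k} x_s^{a-k} m_s^k$ together with the standard binomial identity $\binom{a}{k}\binom{a-k}{j} = \binom{k+j}{j}\binom{a}{k+j}$ shows $\partial^{(s)}$ is iterative, so $\partial^{(s)} \in \LND^I(A)$. Its kernel is the subalgebra generated by $\{x_j : j \neq s\}$; intersecting these kernels over $s \notin S$ yields $\ML^I(A), \ML^H(A) \subseteq B$.

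The opposite inclusion $B \subseteq \ML^H(A)$ is the main obstacle, and I would argue via the discriminant $d := d_w(A/C)$, where $C = C(A)$ and $w = \rk_C(A)$. The key input is that $d$ has a specific monomial form: choosing a multigraded $C$-basis for $A$ (using the $\mathbb{Z}^n$-multigrading $\deg x_j = e_j$ on $A$), the regular trace kills $z_i z_j$ whenever its multidegree is not central, forcing $d$ itself to be multihomogeneous. Since multihomogeneous elements of $A$ are monomials, $d = \alpha \prod_j x_j^{e_j}$ with $\alpha \in k^\times$ and $e_j \in \mathbb{N}$; combining with \cite[Theorem 2.11]{CPWZ2} (behind the equivalence (2)$\Leftrightarrow$(5) of Theorem \ref{xxthm5.7}) gives $e_s \geq 1$ iff $T_s = \emptyset$, so $d = \alpha \prod_{s \in S} x_s^{e_s}$ with each $e_s \geq 1$ (the trivial case $S = \emptyset$ reducing to $B = k$).

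Given $\partial \in \LND^H(A)$ and $G := G_{\partial, t}$, Lemmas \ref{xxlem4.4}(4) and \ref{xxlem4.6}(2) give $G(d) = c \cdot d$ for some $c \in C[t]^\times$; because $A$ is a connected $\mathbb{N}$-graded domain, $A^\times = k^\times$, hence $C^\times = k^\times$ and $C[t]^\times = k^\times$, so $c$ is a $t$-free scalar. Writing $G(x_s) = x_s + h_s$ with $h_s \in tA[t]$, let $u_s \in A$ denote the leading $t$-coefficient of $G(x_s)$ (with $u_s := x_s$ if $h_s = 0$) and $N_s := \deg_t h_s$. Expanding $G(d) = \alpha \prod_{s \in S}(x_s + h_s)^{e_s}$ in $A[t]$, the top $t$-degree contribution is a nonzero $k^\times$-multiple of $t^N \prod_{s \in S} u_s^{e_s}$ with $N = \sum_{s \in S} e_s N_s$; because $A$ is a domain this product is nonzero, and the $t$-freeness of $cd$ forces $N = 0$, hence $h_s = 0$ for every $s \in S$. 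Thus $x_s \in \ker \partial$ and $B \subseteq \ML^H(A)$, completing (2); part (1) follows from the sandwich $\ML^H(A) \subseteq \ML^I(A) \subseteq B = \ML^H(A)$. The hardest step is rigorously establishing the monomial form of $d$ and its prime factorization via the $T_s$'s; thereafter the leading-$t$-degree argument is short and direct.
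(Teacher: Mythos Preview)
Your proof is correct and follows essentially the same strategy as the paper: for the inclusion $\ML^*(A)\subseteq B$ you build, for each $s\notin S$, an explicit locally nilpotent (iterative) higher derivation using an element of $T_s$, and for the reverse inclusion you use that $d_w(A/C)=\alpha\prod_{s\in S}x_s^{e_s}$ (via \cite[Theorem~2.11]{CPWZ2}) is preserved by $G_{\partial,t}$, forcing $G(x_s)=x_s$ by a $t$-degree count. The paper argues identically, citing the same input from \cite{CPWZ2}; your leading-$t$-degree expansion simply makes explicit the one-line step the paper leaves as ``$g(x_s)$ has $t$-degree $0$.''

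The only organizational difference is that the paper proves~(1) first by passing to the fraction field of $k$ (so $k\supset\mathbb{Q}$ and $\ML^I=\ML$), then says~(2) is analogous, whereas you prove~(2) directly and deduce~(1) from the sandwich $\ML^H(A)\subseteq\ML^I(A)\subseteq B=\ML^H(A)$. Your route is marginally cleaner since it avoids the characteristic-zero reduction and handles both parts uniformly, but the underlying ideas are the same.
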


\begin{proof} (1) By replacing $k$ by its 
fraction field, we may assume that $k$ is a field containing
${\mathbb Q}$. In this case, $\ML=\ML^I$.

Let $B$ be the subalgebra generated by
$\{x_s\mid T_s=\emptyset\}$. 
First we show that $\ML(A)\subset B$. Pick $f\in A\setminus B$.
Then there is a $w$ such that $T_w\neq \emptyset$ and
polynomials $f_i$ of
$x_1,\ldots, \widehat{x_w},\ldots,x_n$ such that 
$f=\sum_{i=0}^n f_i x_w^i$ with $f_n\neq 0$ and $n>0$.
Since $T_w\neq \emptyset$, we can pick 
$(d_1,\ldots, \hat{d_w},\ldots, d_n)\in
T_w$. Using this element, we
define a locally nilpotent derivation $\partial_w$ as follows:
\begin{equation}
\label{E6.2.1}\tag{E6.2.1}
\partial_w: x_i\mapsto \begin{cases} 0 & \qquad {\rm{if}} \quad i\neq w\\
x_1^{d_1}x_2^{d_2} \cdots \widehat{x_w}\cdots 
x_{n-1}^{d_{n-1}}x_n^{d_n}& \qquad {\rm{if}} \quad i=w.\end{cases}
\end{equation}
It follows from \eqref{E6.1.2} that $x_w$ 
commutes with $\partial_w(x_w)$. Then 
$$\partial_w (f)=\sum_{i=1}^n f_i i \partial_w(x_w) x_w^{i-1}\neq 0.$$
So $f$ is not in $\ML(A)$. This implies that $\ML(A)\subset B$.

To finish the argument, one needs to show that $\partial(B)=0$ for 
every $\partial\in \LND(A)$. Or it suffices to show that $\partial(x_s)=0$
for all $s$ satisfying $T_s=\emptyset$. 

So we need to prove the following claim:  if $\partial\in \LND(A)$ and 
$T_s=\emptyset$, then $\partial(x_s)=0$. 
Let $g$ be the automorphism of the $k[t]$-algebra $A[t]$ of the form
$\exp(t \partial)$. Let $C$ be the center of $A[t]$. Since
$A[t]$ is also a skew polynomial ring over the base ring 
$k[t]$, by the proof of \cite[Theorem 2.11(1)]{CPWZ2}, 
$d_v(A[t]/C)$ is a monomial (where $v$ is taken to be 
the rank $A[t]$ over $C$). By \cite[Theorem 2.11(2)]{CPWZ2},
$d_v(A[t]/C)=\prod_{\{s\mid T_s=\emptyset\}} x_s^{a_s}$ for
some $a_s>0$. Since every automorphism preserves
$d_v(A[t]/C)$, $g(x_s)$ has $t$-degree 0. This implies
that $\partial(x_s)=0$, as required.

(2) The proof of (2) is similar to the proof of part (1).
The main difference is to replace locally nilpotent derivations by 
locally nilpotent higher derivations. We only provide a sketch here.

Let $w$ be the integer such that $T_{w}\neq \emptyset$. We claim that there 
is a locally nilpotent higher derivation $\{\Delta^n_w\}_{n=0}^{\infty}$ such that
$\Delta^1_{w}$ is the derivation $\partial_{w}$ defined in \eqref{E6.2.1}
(and $\Delta^0_{w}$ is the identity by default). For each $n\geq 2$, 
the $k$-linear map $\Delta_w^n$ is determined by 
$$\Delta^n_w: f \; x_w^{m} \longrightarrow 
\begin{cases} {m\choose n} \; f \; (\partial_w(x_w))^n x_w^{m-n} & {\rm{if}} \;\; 
m\geq n\\
0 & {\rm{otherwise,}}
\end{cases}
$$
for any $f$ being a polynomial of $x_1,\cdots, \widehat{x_w},\cdots, x_n$.
By an easy  combinatorial computation, $\{\Delta^n_w\}_{n=0}^{\infty}$ is a locally 
nilpotent higher derivation. Replacing $\partial_w$ by 
$\{\Delta^n_w\}_{n=0}^{\infty}$, the first half of the proof of part (1) can 
be recycled. The second half of the proof can be copied when $\exp(t\partial)$ 
is replaced by $G_{\partial, t}$. Details are omitted.
\end{proof}

\section{Mod-$p$ reduction}
\label{xxsec7}
In this section we introduce a method that deals
with the {\bf ZCP} for certain non-PI algebras.
We start with a temporary definition.

\begin{definition}
\label{xxdef7.1}
Let $A$ be a $k$-algebra that is free over $k$. Fix a $k$-basis 
$\{x_i\}_{i\in I}$ of $A$. Let $K$ be a subring of $k$. We say 
a subring $B\subset A$ is a $K$-order of $A$ if $\{x_i\}_{i\in I}$
is a $K$-basis of $B$.
\end{definition}

\begin{lemma}
\label{xxlem7.2} Let $K$ be a commutative domain and $A$ be a 
$K$-algebra with $K$-basis $\{x_i\}_{i\in I}$. 
Assume that $K$ is affine over ${\mathbb Z}$.
Suppose that, for every quotient field $F:=K/{\mathfrak m}$,
$A\otimes_K F$ is $\LND^H$-rigid 
{\rm{(}}respectively, strongly $\LND^H$-rigid{\rm{)}}.
Then $A$ is $\LND^H$-rigid 
{\rm{(}}respectively, strongly $\LND^H$-rigid{\rm{)}}.
\end{lemma}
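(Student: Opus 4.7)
My plan is to argue by contrapositive. Suppose $A$ is not $\LND^H$-rigid; I will produce a maximal ideal ${\mathfrak m}$ of $K$ such that $A\otimes_K(K/{\mathfrak m})$ is also not $\LND^H$-rigid, contradicting the hypothesis. The strong case will proceed in parallel, replacing $A$ by $A[t_1,\ldots,t_d]$ throughout, which is itself $K$-free on the basis $\{x_i\, t_1^{m_1}\cdots t_d^{m_d}\}$ and satisfies $(A\otimes_K F)[t_1,\ldots,t_d]=A[t_1,\ldots,t_d]\otimes_K F$ for any quotient field $F$ of $K$.

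The key arithmetic input is that $K$, being affine over ${\mathbb Z}$, is a Jacobson ring, and since $K$ is a domain its nilradical is $(0)$; hence the intersection of all maximal ideals of $K$ is $(0)$, so for every nonzero $c\in K$ there is a maximal ideal ${\mathfrak m}$ with $c\notin{\mathfrak m}$. First I would pick a nonzero $\partial=(\partial_i)\in \LND^H(A)$, locate a basis element $x_j$ with $\partial_i(x_j)\neq 0$ for some $i\geq 1$, expand $\partial_i(x_j)=\sum_k c_k x_k$ with $c_k\in K$, select a nonzero coefficient $c_{k_0}$, and then choose ${\mathfrak m}$ avoiding $c_{k_0}$. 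Set $F=K/{\mathfrak m}$ and let $\bar{\partial}=(\bar{\partial}_i)$ denote the base change of $\partial$ to $A\otimes_K F$, obtained by reducing all $K$-coefficients modulo ${\mathfrak m}$ and extending $F$-linearly on the basis $\{x_i\otimes 1\}$.

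It remains to verify that $\bar{\partial}$ is a nonzero element of $\LND^H(A\otimes_K F)$. Nonvanishing is immediate from $\bar{c}_{k_0}\neq 0$ and $F$-linear independence of $\{x_k\otimes 1\}$. The identity $\bar{\partial}_0=\mathrm{id}$ and the Leibniz-type identity $\bar{\partial}_n(ab)=\sum_i \bar{\partial}_i(a)\bar{\partial}_{n-i}(b)$ descend automatically under base change. Local finiteness (Definition~\ref{xxdef2.1}(3a)) is inherited on basis vectors from the corresponding property for $x_j\in A$ and extends by $F$-linearity to every element of $A\otimes_K F$. For condition (3b), the map $G_{\partial,t}$ is a $K[t]$-algebra automorphism of $A[t]=A\otimes_K K[t]$, so its base change along $K\to F$ yields the required $F[t]$-algebra automorphism $G_{\bar{\partial},t}$ of $(A\otimes_K F)[t]$.

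The main obstacle I anticipate is simply the bookkeeping needed to ensure that every object in sight is $K$-linear, so that reduction modulo ${\mathfrak m}$ is well defined; this is guaranteed by the convention in Definition~\ref{xxdef2.1} that higher derivations are linear over the base ring. No new idea is needed for the strong case, since the same descent argument applied to a nonzero $\partial\in\LND^H(A[t_1,\ldots,t_d])$ witnessing $\ML^H(A[t_1,\ldots,t_d])\neq A$ yields a nonzero $\bar\partial\in \LND^H((A\otimes_K F)[t_1,\ldots,t_d])$ witnessing the failure of strong $\LND^H$-rigidity for $A\otimes_K F$.
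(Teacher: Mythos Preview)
Your argument is correct and follows essentially the same line as the paper's proof: pick a witness $\partial$ and an element $f$ (which you may take to be a basis vector $x_j$) with $\partial_i(f)\neq 0$, single out a nonzero coefficient $c_{k_0}$ in its basis expansion, find a maximal ideal ${\mathfrak m}$ of $K$ missing $c_{k_0}$, and base change to $F=K/{\mathfrak m}$ to obtain a nonzero $\bar\partial\in\LND^H(A_F[t_1,\ldots,t_d])$ that does not kill the image of $f$ in $A_F$. The only cosmetic difference is that the paper produces ${\mathfrak m}$ by first localizing to $K_1=K[c_{k_0}^{-1}]$ and then taking a maximal ideal of $K_1$ (noting that its residue field is finite and is also a quotient field of $K$), whereas you invoke the Jacobson property of affine ${\mathbb Z}$-algebras directly; these are equivalent moves. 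One small point of presentation: your phrase ``replacing $A$ by $A[t_1,\ldots,t_d]$ throughout'' in the first paragraph is slightly misleading, since $A[t_1,\ldots,t_d]$ is never $\LND^H$-rigid and what you actually need (and correctly state in your last paragraph) is a $\partial\in\LND^H(A[t_1,\ldots,t_d])$ that fails to annihilate some element of $A$ itself; the paper handles both cases uniformly by fixing $d\geq 0$ from the outset and always choosing the witness $f$ inside $A$.
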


\begin{proof} Let $d$ be a non-negative integer. We only
need to show that 
$$A\subseteq \ML^H(A[t_1,\ldots,t_d])$$ 
if the above holds when replacing $A$ by $A_F:=A\otimes_K F$ for 
all quotient field $F=K/{\mathfrak m}$. We proceed by contradiction.
If $\partial:=\{\partial_j\}_{j\geq 0}\in \LND^H(A[t_1,\ldots,t_d])$ 
and $\partial(f)\neq 0$ for some $f\in A$. Then there is some 
$j\geq 0$ such that 
$\partial_j(f)=\sum_{i\in I} c_i x_i$ where some $c_{i_0}$ is 
nonzero. Consider $K_1=K[c_{i_0}^{-1}]$ and take a quotient
field $F$ of $K_1$. Then $F$ is finite and $F$ is a quotient 
field of $K$ as well. Remember that $c_{i_0}$ is invertible in $K_1$,
whence invertible in $F$. Write 
$\partial_F= (\partial\otimes_K F)$. It is clear that 
$\partial_F$ is in $\LND^H(A_F[t_1,\ldots,t_d])$ since 
$G_{\partial_F, t}$ is the automorphism 
$G_{\partial,t}\otimes_K F\in \Aut(A_F[t_1,\ldots,t_d][t])$. 
Let $f'$ be the image of $f$ in $A_F$. Then 
$(\partial_F)_j(f')=\sum_{i\in I} c'_i x_i\neq 0$ where
$c'_i$ are image of $c_i$ in $F$. This contradicts the 
fact that $\ML^H(A_F[t_1,\ldots,t_d])=A_F$. Therefore the 
assertion follows.
\end{proof}

\begin{definition}
\label{xxdef7.3} Let $A$ be a $K$-algebra with a $K$-basis
$\{x_i\}_{i\in I}$. We call $\{x_i\}$ \emph{manageable} if
for each $w\geq 0$ and each $K$-algebra 
automorphism $G\in \Aut(A[y_1,\cdots,y_w])$ there is 
an affine ${\mathbb Z}$-subalgebra $K_1\subset K$
such that $B:=\oplus_{i\in I} K_1 x_i$ is a $K_1$-order 
and $G$ is induced from an automorphism of $B[y_1,\cdots,y_w]$.
\end{definition}

\begin{lemma}
\label{xxlem7.4} Let $K$ be a commutative domain and $A$ be a 
finitely generated $K$-algebra with a manageable $K$-basis 
$\{x_i\}_{i\in I}$. If, for every affine ${\mathbb Z}$-subalgebra 
$K_1\subset K$, there is an affine ${\mathbb Z}$-subalgebra 
$K_2 \subset K$ containing $K_1$ such that $B:=\oplus_{i\in I} K_2 x_i$ is 
a $K_2$-order of $A$ and that $B$ is $\LND^H$-rigid 
{\rm{(}}respectively, strongly $\LND^H$-rigid{\rm{)}}, then $A$ 
is $\LND^H$-rigid {\rm{(}}respectively, strongly $\LND^H$-rigid{\rm{)}}.
\end{lemma}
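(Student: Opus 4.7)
The plan is to show that any nonzero locally nilpotent higher derivation on $A[t_1,\ldots,t_d]$ descends, via manageability of $\{x_i\}$, to a nonzero such derivation on some $K_2$-order $B_2 = \oplus_{i\in I} K_2 x_i$ that the hypothesis guarantees to be (strongly) $\LND^H$-rigid. This yields the desired contradiction.

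Suppose for contradiction that $A$ is not $\LND^H$-rigid (resp. not strongly $\LND^H$-rigid): fix $d \geq 0$ (resp. $d \geq 1$) and a higher derivation $\partial = (\partial_i)_{i \geq 0} \in \LND^H(A[t_1,\ldots,t_d])$ with $\partial_j(f) \neq 0$ for some $f \in A$ and some $j \geq 1$. Set $G := G_{\partial,t}$, a $K$-algebra automorphism of $A[t_1,\ldots,t_d][t]$ congruent to the identity modulo $t$. Apply manageability of $\{x_i\}$ with $w = d+1$ to both $G$ and $G^{-1}$; enlarging the resulting pair of $\mathbb{Z}$-subalgebras by the finitely many basis coefficients needed to express $f$, we produce a single affine $\mathbb{Z}$-subalgebra $K_1 \subset K$ such that $B_1 := \oplus_{i\in I} K_1 x_i$ is a $K_1$-order of $A$ containing $f$, and such that both $G$ and $G^{-1}$ restrict to automorphisms of $B_1[t_1,\ldots,t_d][t]$.

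Now invoke the hypothesis of the lemma with this $K_1$: choose an affine $\mathbb{Z}$-subalgebra $K_2 \subset K$ with $K_2 \supseteq K_1$ such that $B_2 := \oplus_{i\in I} K_2 x_i$ is a $K_2$-order of $A$ that is (strongly) $\LND^H$-rigid. The ring $B_2[t_1,\ldots,t_d][t]$ is generated inside $A[t_1,\ldots,t_d][t]$ by $B_1[t_1,\ldots,t_d][t]$ together with the scalars in $K_2 \subset K$; since $G$ fixes $K_2$ pointwise ($K$-linearity) and maps $B_1[t_1,\ldots,t_d][t]$ into itself, $G$ also preserves $B_2[t_1,\ldots,t_d][t]$, and likewise for $G^{-1}$. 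Hence $G$ restricts to a $K_2[t]$-algebra automorphism $G_2$ of $B_2[t_1,\ldots,t_d][t]$ which is the identity modulo $t$; by Lemma \ref{xxlem2.2}(3), $G_2 = G_{\partial^{(2)},t}$ for some $\partial^{(2)} \in \LND^H(B_2[t_1,\ldots,t_d])$, whose $i$-th component is precisely the restriction $\partial_i|_{B_2[t_1,\ldots,t_d]}$. Because $f \in B_1 \subseteq B_2$, we obtain $\partial^{(2)}_j(f) = \partial_j(f) \neq 0$, contradicting the (strong) $\LND^H$-rigidity of $B_2$.

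The delicate point is to ensure that the restriction of $G$ to the smaller ring is a genuine automorphism, so that Lemma \ref{xxlem2.2}(3) applies to produce an honest element of $\LND^H$ rather than just a sequence of endomorphisms; this is exactly why manageability must be applied to both $G$ and $G^{-1}$. Everything else is bookkeeping on finitely generated subrings of $K$: each enlargement keeps us within the class of affine $\mathbb{Z}$-subalgebras of $K$, and the hypothesis allows us to enlarge freely until a rigid $K_2$-order is reached.
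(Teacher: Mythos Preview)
Your proof is correct and follows essentially the same route as the paper's: descend $G_{\partial,t}$ via manageability to an automorphism of an order $B[t_1,\ldots,t_d][t]$, enlarge to a rigid order using the hypothesis, and invoke Lemma~\ref{xxlem2.2}(3) to recover a locally nilpotent higher derivation there that fails to kill $f$. One small redundancy: you need not apply manageability separately to $G^{-1}$, since Definition~\ref{xxdef7.3} already asserts that $G$ is induced from an \emph{automorphism} of $B[y_1,\ldots,y_w]$, whose inverse is then automatically the restriction of $G^{-1}$.
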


\begin{proof} Again, let $d$ be a non-negative integer.
We will show that 
$$A\subseteq \ML^H(A[t_1,\cdots,t_d]).$$ 
If not, pick $\partial\in \LND^H(A[t_1,\cdots,t_d])$ and $f\in A$
such that $\partial(f)\neq 0$. Write, for every $j\geq 0$, 
$\partial_j(f)=\sum_{i\in I} c_{ji} x_i$ where some $c_{j_0i_0}$ is 
nonzero. Consider $G=G_{\partial,t}\in 
\Aut(A[t_1,\cdots,t_d][t])$. Since $\{x_i\}_{i\in I}$
is manageable, $G=H\otimes_{K_2} K$ where $K_2$ is a 
finitely generated ${\mathbb Z}$-subalgebra of $K$ and 
$H\in \Aut(B[t_1,\cdots,t_d][t])$. By the hypothesis, we
may assume that $K_2$ contains all $c_{ji}$ and
$\ML^H(B[t_1,\cdots,t_d])=B$. Since $H(b)\equiv b \mod t$
for all $b\in B[t_1,\cdots,t_d]$, by Lemma \ref{xxlem2.2}(3),
$H=G_{\partial',t}$. Since $\ML^H(B[t_1,\cdots,t_d])=B$,
$\partial'(f)=0$, which implies that $\partial(f)=0$, a 
contradiction.
\end{proof}

Here is the main result of this section. 

\begin{theorem}
\label{xxthm7.5} Let $\Phi:=\{A\}$ be a collection of 
finitely generated algebras over various base commutative 
domains $K$. Assume that the following hold. 
\begin{enumerate}
\item[(1)]
Each $A$ is finitely generated over $K$ and has a manageable $K$-basis.
\item[(2)]
If $A\in \Phi$ where $A$ is a $K$-algebra and $K$ is an affine
${\mathbb Z}$-algebra, then $A\otimes_K F$ is in $\Phi$
where $F$ is a finite quotient field of $K$.
\item[(3)]
If $A\in \Phi$ where $A$ is a $K$-algebra, then for every 
affine ${\mathbb Z}$-subalgebra $K_1\subset K$, 
there is an affine ${\mathbb Z}$-subalgebra 
$K_2 \subset K$ containing $K_1$ such that $A=B\otimes_{K_2} K$ 
for $B\in \Phi$ that is a $K_2$-order of $A$.
\item[(4)]
If $K$ is a finite field, then $A$ is $\LND^H$-rigid 
{\rm{(}}respectively, strongly $\LND^H$-rigid{\rm{)}}.
\end{enumerate}
Then every $A$ in $\Phi$ is $\LND^H$-rigid 
{\rm{(}}respectively, strongly $\LND^H$-rigid{\rm{)}}.
\end{theorem}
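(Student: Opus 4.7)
The strategy is to combine Lemmas \ref{xxlem7.2} and \ref{xxlem7.4} in a two-step descent that pushes the rigidity question down to the finite-field situation where hypothesis (4) applies directly. Fix $A \in \Phi$ viewed as a $K$-algebra; by condition (1) it carries a manageable $K$-basis, so Lemma \ref{xxlem7.4} reduces the goal to the following: for every affine $\mathbb{Z}$-subalgebra $K_1 \subset K$, exhibit an affine $\mathbb{Z}$-subalgebra $K_2 \supset K_1$ of $K$ and a $K_2$-order $B$ of $A$ that is $\LND^H$-rigid (respectively strongly $\LND^H$-rigid).

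Given such a $K_1$, I would invoke condition (3) to produce $K_2 \supset K_1$, affine over $\mathbb{Z}$, and an algebra $B \in \Phi$ which is a $K_2$-order of $A$ with $A \cong B \otimes_{K_2} K$. This reduces matters to showing that $B$ itself is $\LND^H$-rigid (respectively strongly $\LND^H$-rigid). For this I would apply Lemma \ref{xxlem7.2} to the $K_2$-algebra $B$: since $K_2$ is finitely generated over $\mathbb{Z}$, the Nullstellensatz for Jacobson rings ensures that every quotient field $F = K_2/\mathfrak{m}$ is a finite field. Condition (2) then gives that $B \otimes_{K_2} F$ lies in $\Phi$, and condition (4) applies because $F$ is finite, so $B \otimes_{K_2} F$ is $\LND^H$-rigid (respectively strongly so). Lemma \ref{xxlem7.2} then yields the rigidity of $B$, and feeding this back into Lemma \ref{xxlem7.4} completes the proof.

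The argument is essentially bookkeeping once Lemmas \ref{xxlem7.2} and \ref{xxlem7.4} are in hand; the only point requiring care is the appeal to the Nullstellensatz over $\mathbb{Z}$ to guarantee that each residue field appearing in Lemma \ref{xxlem7.2}'s hypothesis is finite, so that condition (4) can indeed be invoked. The parenthetical strong version runs in parallel with the plain version throughout, since both Lemmas \ref{xxlem7.2} and \ref{xxlem7.4} are stated with the same strong/non-strong dichotomy, so no additional work is required to handle the strongly $\LND^H$-rigid case.
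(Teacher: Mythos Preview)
Your proposal is correct and follows essentially the same two-step descent as the paper: first use Lemma \ref{xxlem7.2} (together with conditions (2) and (4)) to obtain rigidity over affine $\mathbb{Z}$-bases, then use Lemma \ref{xxlem7.4} (together with conditions (1) and (3)) to pass to general $K$. Your version is more explicit than the paper's terse proof, in particular spelling out the role of condition (2) and the appeal to the Nullstellensatz over $\mathbb{Z}$ that guarantees the residue fields are finite.
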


\begin{proof} By Lemma \ref{xxlem7.2} and hypothesis
(4), $A$ is $\LND^H$-rigid 
{\rm{(}}respectively, strongly $\LND^H$-rigid{\rm{)}}
if $K$ is affine over ${\mathbb Z}$. Then by
Lemma \ref{xxlem7.4} and hypothesis (3), every 
$A$ in $\Phi$ is $\LND^H$-rigid 
{\rm{(}}respectively, strongly $\LND^H$-rigid{\rm{)}}.
\end{proof}

Now we are ready to prove Theorem \ref{xxthm0.8}.

\begin{proof}[Proof of Theorem \ref{xxthm0.8}]
We now construct the collection $\Phi$ as follows:
a $K$-algebra $A$ is in $\Phi$ if $A$ is a finite tensor product 
(over $K$) of different copies $K_{p}[x_1,\cdots,x_n]$ when $n$ is even 
(for different values of $p\neq 1$), copies of 
$K\langle x,y\rangle/(x^2y-yx^2,y^2x+xy^2)$, and different copies 
of $K\langle x,y\rangle/(yx-qxy-1)$ 
(for different values of $q\neq 1$). We require that the base 
commutative rings $K$ are domains containing the following elements
\begin{equation}
\label{E7.5.1}\tag{E7.5.1}
2^{-1}, p^{\pm 1}, q^{\pm 1}, (p-1)^{-1}, (q-1)^{-1}
\end{equation}
for different $p$ and $q$ occurring in $A$. In particular,
\begin{equation}
\label{E7.5.2}\tag{E7.5.2}
p\neq 0, 1 \quad {\rm{and}} \quad  q\neq 0, 1 
\quad {\rm{and}} \quad  2\neq 0. 
\end{equation}
The conditions in \eqref{E7.5.2} will survive 
when passing $K$ to a finite quotient field $K/I$
due to \eqref{E7.5.1}. If $K$ is a finite
field, Example \ref{xxex4.8}(4) says that $A$ has 
dominating discriminant. By Theorems \ref{xxthm4.7}(1)
and \ref{xxthm5.2}(2), $A$ is strongly $\LND^H$-rigid.
This verified hypothesis (4) of Theorem \ref{xxthm7.5}.
Hypotheses (1,2,3) of Theorem \ref{xxthm7.5} are easy to 
verify, see Lemma \ref{xxlem7.6} below. Therefore 
every member in $\Phi$ is strongly $\LND^H$-rigid. 

Since every $A$ is a domain of finite GK-dimension,
by Theorem \ref{xxthm3.3}, $A$ is strongly cancellative. 
\end{proof}

\begin{lemma}
\label{xxlem7.6} Retain notation as in the proof of 
Theorem {\rm{\ref{xxthm0.8}}} above. 
\begin{enumerate}
\item[(1)]
Each $A$ is finitely generated over $K$ and has a manageable $K$-basis.
\item[(2)]
If $A\in \Phi$ where $A$ is a $K$-algebra and $K$ is an affine
${\mathbb Z}$-algebra, then $A\otimes_K F$ is in $\Phi$
where $F$ is a finite quotient field of $K$.
\item[(3)]
If $A\in \Phi$ where $A$ is a $K$-algebra, then for every 
affine ${\mathbb Z}$-subalgebra $K_1\subset K$, 
there is an affine ${\mathbb Z}$-subalgebra 
$K_2 \subset K$ containing $K_1$ such that $A=B\otimes_{K_2} K$ 
for $B\in \Phi$ that is a $K_2$-order of $A$.
\end{enumerate}
\end{lemma}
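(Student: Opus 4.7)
The plan is to verify each clause by unwinding the definitions and tracking which elements of $K$ actually appear in the defining relations, structure constants, and chosen automorphisms. Throughout, each $A\in\Phi$ carries a natural $K$-basis $\{x_i\}_{i\in I}$ of ordered monomials in the generators (using the defining relations as reduction rules): for a tensor factor $K_p[x_1,\ldots,x_n]$ the ordered monomials $x_1^{a_1}\cdots x_n^{a_n}$; for $K\langle x,y\rangle/(x^2y-yx^2,y^2x+xy^2)$ the monomials in $x,y$ put into a fixed normal form; and for $K\langle x,y\rangle/(yx-qxy-1)$ the monomials $x^ay^b$. The $K$-basis of a tensor product is the tensor product of these bases. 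All multiplication structure constants lie in the subring generated by the scalars in \eqref{E7.5.1}.

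For part (1), I would fix $G\in\Aut_K(A[y_1,\ldots,y_w])$. Since $A$ is finitely generated over $K$, the automorphism $G$ and its inverse $G^{-1}$ are each determined by the finite list of images $G(x_j),G(y_s),G^{-1}(x_j),G^{-1}(y_s)$, and each of these images is a $K$-linear combination of the standard monomial basis with only finitely many nonzero coefficients. Let $K_1$ be the $\Z$-subalgebra of $K$ generated by all coefficients occurring in this finite list together with the elements listed in \eqref{E7.5.1}. Then $K_1$ is affine over $\Z$, the structure constants of $A$ lie in $K_1$, so $B:=\oplus_{i\in I} K_1 x_i$ is closed under multiplication and is a $K_1$-order of $A$, and by construction both $G$ and $G^{-1}$ restrict to $K_1$-algebra endomorphisms of $B[y_1,\ldots,y_w]$ that are mutually inverse; hence $G$ is induced from an automorphism of $B[y_1,\ldots,y_w]$.

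For part (2), let $F=K/\mathfrak{m}$ be a finite quotient field. Tensoring the defining relations of $A$ with $F$ yields an algebra of the same form, with each $p,q$ replaced by its image $\bar p,\bar q\in F$. By \eqref{E7.5.1}, the elements $2^{-1},p^{\pm 1},q^{\pm 1},(p-1)^{-1},(q-1)^{-1}$ already lie in $K$, so their images in $F$ are all nonzero; this forces $\bar p\neq 0,1$, $\bar q\neq 0,1$, and $\mathrm{char}\,F\neq 2$, which are precisely the conditions required for $A\otimes_K F$ to belong to $\Phi$.

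For part (3), given an affine $\Z$-subalgebra $K_1\subset K$, let $K_2$ be the $\Z$-subalgebra of $K$ generated by $K_1$ together with the finitely many elements in \eqref{E7.5.1} occurring in the presentation of $A$. Then $K_2$ is affine over $\Z$, contains $K_1$, and the defining relations of $A$ make sense over $K_2$; writing $B$ for the corresponding algebra over $K_2$, we have $B\in\Phi$, $B$ is a $K_2$-order of $A$ (its monomial basis is the same as that of $A$), and $A\cong B\otimes_{K_2}K$. The main point to watch throughout is the manageability of (1), where one must remember to adjoin the coefficients of $G^{-1}$ as well as those of $G$ so that the restriction to $B[y_1,\ldots,y_w]$ is an automorphism rather than merely an endomorphism; the other two parts are essentially just bookkeeping that the conditions \eqref{E7.5.1} descend to and survive restriction.
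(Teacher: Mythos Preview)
Your proof is correct and follows essentially the same approach as the paper: exploit the monomial basis whose structure constants lie in the $\Z$-subalgebra generated by the scalars in \eqref{E7.5.1}, and for manageability adjoin to that subring the finitely many coefficients appearing in $G$ and $G^{-1}$ on a finite generating set. The paper's write-up is nearly identical, including the emphasis on adjoining the coefficients of $G^{-1}$ so that the restriction is an automorphism.
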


\begin{proof} Part (2) is clear by the definition of $\Phi$. 
We only prove parts (1) and (3).

For each $K$-algebra $A$ in $\Phi$, it is well-known that 
there is a $K$-linear basis ${\mathcal A}$ 
of $A$ that consists of a family of 
noncommutative monomials. One property of this basis is that 
it is independent of the parameters $p,q$ and independent of 
the base ring $K$. The multiplication constant with 
respect to this basis are in the ${\mathbb Z}$-subalgebra of
$K$ generated by elements in \eqref{E7.5.1}. Then (3) follows
by taking $K_2$ to be the subalgebra generated by $K_1$ and 
elements in \eqref{E7.5.1}.

For part (1), we show that the basis ${\mathcal A}$ 
used in the last paragraph is manageable. Let 
$E:=A[y_1,\cdots,y_w]$ where $A$ is in $\Phi$.
It is clear that $E$ has a $K$-basis, denoted by 
${\mathcal E}$, of the form 
$$\{ b_n y_1^{d_1}\cdots y_w^{d_w}\mid b_n\in {\mathcal A}, 
d_s\geq 0\}.$$
Since $A$ is finitely generated over $K$, so is $E$. Let
$E$ be generated by $\{f_1,\cdots,f_z\}\subseteq 
{\mathcal E}$. Let $G\in \Aut(E)$. 
Then there is an affine ${\mathbb Z}$-subalgebra $K_1$ 
of $K$ such that $G(f_s)$ and $G^{-1}(f_s)$, for $s=1,\cdots,z$, 
are all in the $K_1$-span $K_1 {\mathcal E}$. Without loss of generality 
we assume that $K_1$ contains elements in \eqref{E7.5.1}. Then 
$K_1 {\mathcal A}$ is an algebra, which is denoted by $B$. 
(By part (3), we might further assume that $B$ is in $\Phi$.) Since 
$A$ and $B$ has the ``same'' basis (over different commutative 
rings), $B$ is a $K_1$-order of $A$. By the choice of $K_1$,
$G$ restricts to an algebra automorphism $G'$ of $B[y_1,\cdots,y_w]$.
Since $E$ and $B[y_1,\cdots,y_w]$ has the ``same'' basis, $G$ is induced
form $G'$. Therefore part (1) holds.
\end{proof}

We summarize the key steps of solving {\bf ZCP} for 
noncommutative algebras similar to those in Theorem \ref{xxthm0.8}
as follows. For an algebra $A$ over a base commutative 
ring $k$ satisfying certain finiteness conditions, one uses 
reduction modulo $p$ to reduce the problem in the special case 
when $k$ is a finite field. When $k$ is a finite field, 
$A$ ends up being PI (which is true for a large class 
of quantized algebras). Then one can compute the discriminant
of $A$ over its center, say $d:=d(A/C)$. If one can verify that 
$d$ is effective (or dominating), then $A$ is strongly 
$\LND^H$-rigid by Theorems \ref{xxthm4.7}(1) and 
\ref{xxthm5.2}(2). Finally, by Theorem \ref{xxthm3.3}(1),
$A$ is strongly cancellative. So we have the following diagram.
$$\begin{CD}
{\text{$A$ with finiteness conditions}} \\
@VV{\text{reduction mod $p$}}V
\\
{\text{$A$ over a finite field $k$}}\\
@VV
{\text{computing discriminant $d$}}V \\
{\text{Checking if $d$ is effective}}\\
@VV{\text{Theorem \ref{xxthm5.2}(2)}}V\\
{\text{$A$ is strongly $\LND^H$-rigid}}\\
@VV{\text{Theorem \ref{xxthm3.3}(1)}}V\\
{\text{$A$ is strongly cancellative.}}\\
\end{CD}$$
For algebras of GK-dimension two we should 
apply Theorem \ref{xxthm0.5} directly.

\providecommand{\bysame}{\leavevmode\hbox to3em{\hrulefill}\thinspace}
\providecommand{\MR}{\relax\ifhmode\unskip\space\fi MR }
\providecommand{\MRhref}[2]{%
\href{http://www.ams.org/mathscinet-getitem?mr=#1}{#2} }
\providecommand{\href}[2]{#2}


\begin{thebibliography}{10}

\bibitem[AEH]{AEH}
S. Abhyankar, P. Eakin and W. Heinzer, 
On the uniqueness of the coefficient ring in a polynomial ring, 
{\it J. Algebra} {\bf 23} (1972) 310--342.

\bibitem[AM]{AM}
S. Abhyankar and T. Moh, Embeddings of the line in the plane, 
J. Reine angew. Math., {\bf 276} (1975), 148--166.







\bibitem[AW]{AW} 
S. Alaca and K.S. Williams, \emph{Introductory Algebraic Number
Theory}, Cambridge Univ. Press, Cambridge, 2004.



\bibitem[AS]{AS}
M. Artin and W. Schelter, Graded algebras of global dimension 3, 
\emph{Adv. Math.} 66 (1987), 171--216.


\bibitem[Ba]{Ba} 
V. Bavula, 
The inversion formula for automorphisms of the Weyl algebras and 
polynomial algebras, 
\emph{J. Pure Appl. Algebra} {\bf 210} (2007), 147--159.






\bibitem[Be]{Be}
J. P. Bell, 
Division algebras of Gelfand-Kirillov transcendence degree 2,
\emph{Israel J. Math.} {\bf 171} (2009), 51--60. 


\bibitem[BS]{BS} 
J. Bell and A. Smoktunowicz,
Rings of differential operators on curves, 
\emph{Israel J. Math.}  192  (2012),  no. 1, 297--310.

\bibitem[BZ]{BZ}
J. Bell and J.J. Zhang,
An isomorphism lemma for graded rings,
\emph{Proc. Amer. Math. Soc.} {\bf 145} (2017), no. 3, 989--994.



\bibitem[CPWZ1]{CPWZ1}
S. Ceken, J. Palmieri, Y.-H. Wang and J.J. Zhang,
The discriminant controls automorphism group of noncommutative 
algebras, 
\emph{Adv. Math.} {\bf  269}  (2015), 551--584. 


\bibitem[CPWZ2]{CPWZ2}
S. Ceken, J. Palmieri, Y.-H. Wang and J.J. Zhang,
The discriminant criterion and the
automorphism groups of quantized algebras,
\emph{Adv. Math.} {\bf  286}  (2016), 754--801. 


\bibitem[CYZ1]{CYZ1}
K. Chan, A. Young and J.J. Zhang,
Discriminant Formulas and Applications,
\emph{Algebra Number Theory} {\bf 10} (2016), no. 3, 557--596.

\bibitem[CYZ2]{CYZ2}
K. Chan, A. Young and J.J. Zhang,
Discriminant and automorphism of Veronese 
subrings of skew polynomial rings,
preprint (2016), arXiv:1606.01296.

\bibitem[Cr]{Cr}
A.J. Crachiola, 
Cancellation for two-dimensional unique factorization domains,
\emph{J. Pure Appl. Algebra}, {\bf  213}  (2009),  
no. 9, 1735--1738.

\bibitem[Da]{Da}
W. Danielewski, 
On the cancellation problem and automorphism groups of 
affine algebraic varieties, preprint, 1989, 8 pages, Warsaw.

\bibitem[Fi]{Fi}
K.-H. Fieseler, 
On complex affine surfaces with ${\mathbb C}^+$-action, 
\emph{Comment. Math. Helv.} {\bf 69} (1994) 5-27. 


\bibitem[Fu]{Fu}
T. Fujita, On Zariski problem, 
{\it Proc. Japan Acad.}, {\bf 55(A)} (1979) 106--110.








\bibitem[Gu1]{Gu1}
N. Gupta,
On the Cancellation Problem for the Affine Space ${\mathbb A}^3$ 
in characteristic $p$, 
{\it Inventiones Math.}, {\bf 195} (2014), no. 1, 279--288.


\bibitem[Gu2]{Gu2}
N. Gupta,
On Zariski's Cancellation Problem in positive characteristic,
\emph{Adv. Math.} {\bf  264}  (2014), 296--307. 

\bibitem[HS]{HS}
H. Hasse and F.K. Schmidt, 
Noch eine Begr{\"u}ndung der Theorie der h{\"o}heren 
Differrentialquotienten in einem algebraischen Funktionenkorper 
einer Unbestimmten, \emph{J. Reine Angew. Math.} {\bf 177} 
(1937) 223--239.





\bibitem[KKZ]{KKZ}
E. Kirkman, J. Kuzmanovich and J.J. Zhang, 
Invariants of $(-1)$-skew polynomial rings under permutation representations, 
\emph{Recent advances in representation theory, quantum groups, 
algebraic geometry, and related topics}, 155--192, Contemp. Math., 
{\bf 623}, Amer. Math. Soc., Providence, RI, 2014.


\bibitem[Kr]{Kr}
H. Kraft, 
Challenging problems on affine $n$-space. 
{\it S{\'e}minaire Bourbaki}, Vol. 1994/95. 
{\it Ast{\'e}risque},  No. {\bf 237}  (1996), Exp. No. 802, 5, 
295--317.


\bibitem[KL]{KL} 
G. Krause and T. Lenagan,
\emph{Growth of Algebras and Gelfand-Kirillov Dimension}, 
revised edition, Graduate Studeis in Mathematics, Vol. 22, AMS, 
Providence, 2000.

\bibitem[LL]{LL}
S. Launois and C. Lecoutre,
A quadratic Poisson Gel'fand-Kirillov problem in 
prime characteristic, 
\emph{Trans. Amer. Math. Soc.} {\bf 368} (2016), no. 2, 755--785.


\bibitem[Lu]{Lu}
B. Lutz,
How to (not) cancel varieties,
www-personal.umich.edu/$\sim$boblutz/essay$_2$.pdf,
February 22, 2015.

\bibitem[Ma1]{Ma1}
L. Makar-Limanov, 
On the hypersurface $x+x^2y+z^2+t^3=0$ in ${\bf C}^4$ or a 
${\bf C}^3$-like threefold which is not ${\bf C}^3$, 
\emph{Israel J. Math.}  {\bf 96}  (1996),  part B, 419--429.

\bibitem[Ma2]{Ma2}
L. Makar-Limanov, 
Locally nilpotent derivations, a new ring invariant and
applications, notes, (2008)
http://www.math.wayne.edu/~lml/lmlnotes.pdf.

\bibitem[Mat]{Mat}
H. Matsumura, 
Integrable derivations,
\emph{Nagoya Math. J.} {\bf 87} (1982), 227--245. 



\bibitem[MS]{MS}
M. Miyanishi and T. Sugie, 
Affine surfaces containing cylinderlike open sets, 
{\it J. Math. Kyoto Univ.} {\bf 20} (1980) 11--42.



\bibitem[NM]{NM}
L. Narv{\'a}ez Macarro, 
Hasse-Schmidt derivations, divided powers and 
differential smoothness,
\emph{Ann. Inst. Fourier} (Grenoble) 
{\bf 59} (2009), no. 7, 2979--3014.




\bibitem[Re]{Re}
I. Reiner,
\emph{Maximal orders}, London Mathematical Society Monographs.
New Series, 28. The Clarendon Press, Oxford University Press, Oxford, 2003.

\bibitem[Ru]{Ru}
P. Russell, 
On Affine-Ruled rational surfaces, 
{\it Math. Ann.}, {\bf 255} (3) (1981) 287--302.



\bibitem[SZ]{SZ}
S.P. Smith and J.J.Zhang, 
A remark on Gelfand-Kirillov dimension, 
Proc. Amer. Math. Soc. {\bf 126}  (1998),  no. 2, 349--352.






\bibitem[Su]{Su}
M. Suzuki, 
Propri{\'e}t{\'e}s topologiques des polynomes de deux variables complex et 
automorphisms alg{\'e}briques de l'espace C2, J. Math. Soc. Japan, 
{\bf 26}  (3) (1974), 241--257.


\bibitem[St]{St}
W. A. Stein, \emph{Algebraic Number Theory: A Computational Approach},
preprint, \url{http://wstein.org/books/ant/}.





\bibitem[vdE]{vdE}
A. van den Essen,
Around the Abhyankar-Moh theorem, 
{\it Algebra, arithmetic and geometry with applications} 
(West Lafayette, IN, 2000),  283--294, Springer, Berlin, 2004.








\end{thebibliography}
\end{document}